\numberwithin{theorem}{section}
\numberwithin{lemma}{section}
\numberwithin{proposition}{section}
\numberwithin{corollary}{section}
\numberwithin{definition}{section}  
\numberwithin{remark}{section}
\spnewtheorem{assumption}[theorem]{Assumption}{\bfseries}{\itshape}
\numberwithin{equation}{section}
\newcommand{\NDelta}{N_\Delta}
\newcommand{\NC}{N_{\mu_C}}
\newcommand{\RV}{\mathrm{RV}}
\begin{document}
\mainmatter

\title{Rigidity of Spectral Encodings under \\ Weyl Growth Conditions}
\author{Anton Alexa}
\tocauthor{Anton Alexa} 

\institute{Independent Researcher, Chernivtsi, Ukraine\\
\email{mail@antonalexa.com}}

\maketitle 

\begin{abstract}
We prove that the geometric Weyl bulk-density exponent $(d-2)/2$ rigidifies
spectral encodings $C=\pi-\phi(\lambda)$ in the O-regularly varying class:
the bulk power law forces $\phi\in\mathrm{RV}_1$ (asymptotic linearity).
For polynomial-type encodings
$C=\pi-\epsilon\lambda^k L(\lambda)$ with $L\in\mathrm{RV}_0$,
this yields the unique admissible exponent $k=1$.
The affine encoding then gives
$N_{\mu_C}(C)\sim\gamma_d\,\epsilon^{-d/2}(\pi-C)^{d/2}$ as $C\to-\infty$,
allowing recovery of $d$ and $\gamma_d$ from bulk encoded data.
This transfer is stable under perturbations $\delta(\lambda)=o(\lambda)$,
with explicit slowly varying error control.
We further formalize asymptotic spectral equivalence classes:
if $\phi\in\mathrm{RV}_k$, the induced map scales asymptotic spectral dimension
as $d_{\mathrm{as}}\mapsto d_{\mathrm{as}}/k$; hence dimension preservation
is equivalent to $\phi\in\mathrm{RV}_1$, with strict affine normalization at
first order when $L(\lambda)\to1$.
\end{abstract}

\keywords{Weyl law, spectral encoding rigidity, affine rigidity, O-regular variation, Tauberian theorems, bulk--edge asymptotics, asymptotic spectral dimension, spectral equivalence classes, Kre{\u\i}n strings, stability}

\section{Introduction}\label{sec:intro}

We study the following rigidity question:
which spectral encodings of the Laplace spectrum preserve the geometric
Weyl density exponent?

Let $(M^d,g)$ be a smooth compact Riemannian manifold without boundary, with
Laplace eigenvalues $0=\lambda_0\le\lambda_1\le\cdots\to\infty$.
A spectral encoding is a reparametrization $C=f(\lambda)$ that maps
the spectrum to a new variable; the induced counting function $N_{\mu_C}$ and
bulk density $\rho_{\mathrm{bulk}}$ carry whatever spectral information survives
the reparametrization.
For this framework to be useful in geometric inverse problems, the encoding must
be rigid: it must neither destroy nor artificially create the exponent
$(d-2)/2$ that signatures dimension in Weyl's law
\cite{Hoermander1968,SafarovVassiliev1997,Ivrii1998}.

Our main result establishes rigidity within the full class of O-regularly varying
encodings \cite{BGT87} (Theorem~\ref{thm:ORV-uniqueness}):
the bulk power-law constraint forces $\phi\in\mathrm{RV}_1$, i.e.,
asymptotic linearity of the encoding.
In the polynomial case $C=\pi-\epsilon\lambda^k L(\lambda)$ with
$L\in\mathrm{RV}_0$, this specializes to $k=1$:
any $k\neq1$ distorts the exponent to $(d-2)/(2k)$, breaking spectral-dimension
recovery.

As a structural consequence of the uniquely correct affine encoding
$C=\pi-\epsilon\lambda$, the edge-variable Weyl law
\begin{equation*}
N_{\mu_C}(C)\sim\gamma_d\,\epsilon^{-d/2}(\pi-C)^{d/2},\qquad C\to-\infty,
\end{equation*}
holds, allowing dimension $d$ and Weyl constant $\gamma_d$ to be recovered from
one-dimensional bulk data.
This transfer is stable under perturbations $\delta(\lambda)=o(\lambda)$
(Proposition~\ref{prop:stability}), with error rates governed by the slowly
varying factor $\eta=\delta/\lambda\in\mathrm{RV}_0$.

To make the rigidity statement structural (rather than only model-specific),
we introduce asymptotic spectral equivalence classes determined by regular
variation exponent and define the encoding-induced map on these classes.
For $\phi\in\RV_k$, this map scales asymptotic spectral dimension by
$d_{\mathrm{as}}\mapsto d_{\mathrm{as}}/k$, so the dimension-preserving
case is exactly $\phi\in\RV_1$ (with strict affine normalization when
$L(\lambda)\to1$).

Beyond uniqueness and stability, we prove that the full discrete spectral measure
with multiplicities --- capturing clustering on symmetric spaces --- is completely
and uniquely realizable by a Kre\u{\i}n string
(Theorem~\ref{thm:MSL-realization}, \cite{EckhardtTeschl2013}).
No classical smooth Sturm--Liouville operator can do this for $d>1$
(Lemma~\ref{lem:no-classical-SL}, \cite{Titchmarsh1962,LevitanSargsjan1991}),
so the Kre\u{\i}n model is the natural one-dimensional realization of Riemannian
spectra.

\begin{theorem}[Affine Rigidity Theorem]\label{thm:main}
Let $(M^d,g)$ be a smooth compact Riemannian manifold without boundary.
\begin{enumerate}[label=\textup{(\roman*)},itemsep=3pt]
\item \textup{(O-regular variation rigidity.)}
Among all O-regularly varying encodings $C=\pi-\phi(\lambda)$,
the requirement that the bulk density exponent equals $(d-2)/2$
forces $\phi\in\mathrm{RV}_1$, i.e.\ asymptotic linearity of the encoding.

\item \textup{(Polynomial special case.)}
In particular, among all polynomial-type encodings $C=\pi-\epsilon\lambda^k L(\lambda)$
with $k>0$ and $L\in\mathrm{RV}_0$, only the asymptotically affine case
$k=1$ preserves the geometric bulk density exponent $(d-2)/2$.
Any $k\neq1$ distorts the exponent to $(d-2)/(2k)\neq(d-2)/2$,
making spectral-dimension recovery impossible.
\end{enumerate}
\end{theorem}

\begin{corollary}[Edge-variable Weyl law and stability]\label{cor:edge-weyl}
Under the affine encoding $C=\pi-\epsilon\lambda$:
\begin{enumerate}[label=\textup{(\roman*)},itemsep=3pt]
\item \textup{(Weyl transfer.)}
In the bulk regime $C\to-\infty$,
\begin{equation}
N_{\mu_C}(C)\sim \gamma_d\,\epsilon^{-d/2}(\pi-C)^{d/2},\qquad
\rho_{\mathrm{bulk}}(C)\sim \tfrac{d}{2}\,\gamma_d\,\epsilon^{-d/2}(\pi-C)^{(d-2)/2},
\end{equation}
enabling exact recovery of $d$ and $\gamma_d$ from one-dimensional
encoded data in the bulk regime ($C\to-\infty$).

\item \textup{(Stability.)}
The bulk exponents are preserved under perturbations
$C=\pi-\epsilon\lambda+\delta(\lambda)$ with $\delta(\lambda)=o(\lambda)$,
with error rates governed by the slowly varying factor $\eta=\delta/\lambda\in\mathrm{RV}_0$.
\end{enumerate}
\end{corollary}

\paragraph{Context.}
The edge-variable framework was introduced informally in
\cite{alexa2025-spectral_classification} (sec.~VI.A), where spectral density in
the edge variable was proposed as a dimensional signature.
The present paper establishes the rigidity of this choice: the affine encoding
is not merely convenient but is the unique polynomial-type
reparametrization compatible with spectral-dimension recovery.
The proofs rely on Tauberian tools from regular variation theory \cite{BGT87}
and on the Kre\u{\i}n string correspondence \cite{EckhardtTeschl2013}.

\begin{remark}[Terminology]
We reserve edge for the geometric endpoint $C\uparrow\pi$, while
bulk refers to the high-energy regime $C\to-\infty$.
Note that $C=\pi$ is the image of $\lambda_0=0$ under the encoding
$C=\pi-\epsilon\lambda$: it is the spectral edge of the Riemannian Laplacian
in the $C$-variable (the encoding reverses orientation, so the lower
spectral boundary $\lambda=0$ maps to the upper boundary $C=\pi$).
The terminology is inherited from the companion framework
\cite{alexa2025-spectral_classification}, where $C=\pi$ is the natural upper
limit of the deformation operator $\hat{C}$ and the variable $C$ is called the
edge variable; the Weyl-type asymptotics occur in the bulk
($C\to-\infty$) regime of this variable.
For modern overviews of Weyl asymptotics and spectral Tauberian methods,
see \cite{Grieser2009}.
\end{remark}
\section{Preliminaries and standing assumptions}\label{sec:prelim}

This section fixes notation and records the standing assumptions used throughout.
We introduce the Laplace spectrum on a compact Riemannian manifold, the
affine encoding $C=\pi-\epsilon\lambda$ and the induced edge-variable measures,
the composition identity that connects edge-variable counting to the classical
counting function, and the regular variation toolkit from which the uniqueness
and stability arguments draw.

\subsection{Geometry and Laplace spectrum}\label{subsec:geom-laplace}

We work throughout on a smooth compact Riemannian manifold without boundary;
the Laplace--Beltrami operator has discrete spectrum with finite multiplicities.

\begin{assumption}[Smoothness and spectrum]\label{ass:smoothness}
Throughout, $M^d$ is a smooth, connected, compact Riemannian manifold without boundary, and $-\Delta_g$ denotes the nonnegative Laplace--Beltrami operator on $L^2(M)$. It is essentially self-adjoint on $C^\infty(M)$ and has purely discrete spectrum $0=\lambda_0\le\lambda_1\le\cdots\to\infty$, each eigenvalue with finite multiplicity $m_n$; \cite[Ch.~III]{Chavel1984}, \cite[§§2--3]{Rosenberg1997}, \cite[Ch.~XIII]{ReedSimonIV}.
\end{assumption}

\subsection{Spectral measures and counting}\label{subsec:measures}

We encode the spectrum as a discrete measure and record the c\`adl\`ag properties
of the associated counting function used throughout.

\begin{definition}[Spectral measure and counting function]
Let $(M^d,g)$ be a compact Riemannian manifold without boundary, and let 
\begin{equation}
-\Delta_g \phi_n = \lambda_n \phi_n, \qquad \|\phi_n\|_{L^2(M)}=1,
\end{equation}
with eigenvalues $0=\lambda_0 \le \lambda_1 \le \cdots \to \infty$ and multiplicities $m_n$.  
The associated \emph{spectral measure} is
\begin{equation}
\mu_\Delta := \sum_n m_n \, \delta_{\lambda_n},
\end{equation}
and its \emph{counting function} is
\begin{equation}
N_\Delta(\Lambda) := \mu_\Delta((-\infty,\Lambda]) = \sum_{\lambda_n \le \Lambda} m_n,
\end{equation}
where eigenvalues are repeated according to their multiplicities $m_n$
\cite[Thm.~VII.2]{ReedSimonI}.
\end{definition}

\begin{lemma}[Counting function properties]\label{lem:counting-properties}
$N_\Delta$ is nondecreasing, \\ right--continuous with left limits (c\`adl\`ag), and has
jumps of size $m_n$ at the eigenvalues $\lambda_n$. Hence $N_\Delta\in BV_{\mathrm{loc}}([0,\infty))$
and is the Stieltjes distribution function of $\mu_\Delta$.
\end{lemma}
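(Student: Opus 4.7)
The plan is to reduce everything to the explicit representation
\[
N_\Delta(\Lambda) \;=\; \sum_{n} m_n \, \mathbf{1}_{[\lambda_n,\infty)}(\Lambda),
\]
and then read off the four claims (monotonicity, càdlàg, jump sizes, BV and Stieltjes identification) from standard facts about finite sums of Heaviside-type step functions. The only nontrivial input is that for any $\Lambda<\infty$ the sum is effectively finite, which follows from Assumption~\ref{ass:smoothness}: discreteness and $\lambda_n\to\infty$ force $\#\{n:\lambda_n\le\Lambda\}<\infty$, so on $[0,\Lambda]$ the series collapses to a finite sum and all pointwise and limit questions are unambiguous.

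First I would verify monotonicity and càdlàg regularity. Each summand $m_n\mathbf{1}_{[\lambda_n,\infty)}$ is nondecreasing and right-continuous with a left limit, with $N_\Delta(\lambda_n^-)=0$ and a jump of $m_n$ at $\lambda_n$. A finite sum of right-continuous nondecreasing step functions is again right-continuous and nondecreasing, with left limits given by the corresponding partial sums. For the jump sizes I would distinguish the index $n$ (position in the ordered list with multiplicity) from the underlying distinct values of the spectrum: at a distinct eigenvalue $\lambda$ the jump of $N_\Delta$ equals $\sum_{n:\lambda_n=\lambda} m_n$, which in the convention used here (eigenvalues listed with multiplicity and $m_n$ attached to each repetition) aggregates to the full multiplicity of $\lambda$. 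I would state this compatibility explicitly so the formula ``jump $=m_n$'' is unambiguous.

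Next I would obtain the $BV_{\mathrm{loc}}$ claim: on $[0,\Lambda]$, $N_\Delta$ is monotone and finite, hence its total variation equals $N_\Delta(\Lambda)-N_\Delta(0^-)=N_\Delta(\Lambda)<\infty$. Local boundedness on every compact subinterval then gives $N_\Delta\in BV_{\mathrm{loc}}([0,\infty))$.

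Finally, for the Stieltjes identification I would invoke the classical correspondence between nondecreasing right-continuous functions and positive Radon measures on $\mathbb{R}$ (Reed--Simon~I, Thm.~VII.2, already cited). The measure $\mu_\Delta=\sum_n m_n\delta_{\lambda_n}$ satisfies $\mu_\Delta((-\infty,\Lambda])=N_\Delta(\Lambda)$ by construction, and conversely the Lebesgue--Stieltjes measure $dN_\Delta$ assigns mass $m_n$ to each $\{\lambda_n\}$ and zero to any interval disjoint from $\{\lambda_n\}$; these two measures therefore coincide on the generating $\pi$-system of half-lines, hence everywhere. The main (minor) obstacle is purely notational: making sure the multiplicity bookkeeping is consistent between the indexed list $(\lambda_n,m_n)$ and the set of distinct eigenvalues, which I would dispatch with one explanatory sentence.
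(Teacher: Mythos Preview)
Your proposal is correct and follows the same approach as the paper: both argue that the claims are immediate from the identity $N_\Delta(\Lambda)=\mu_\Delta((-\infty,\Lambda])$ for the purely atomic measure $\mu_\Delta$, with you simply unpacking the word ``immediate'' via the explicit sum-of-indicators representation and the local finiteness coming from $\lambda_n\to\infty$. The paper's proof is the one-line version of yours; your extra care about the multiplicity bookkeeping (indexed list versus distinct values) is a welcome clarification of a notational ambiguity present in the surrounding text.
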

\begin{proof}
Since $N_\Delta(\Lambda)=\mu_\Delta((-\infty,\Lambda])$ with a purely atomic measure,
all stated properties are immediate.
\qed
\end{proof}

\subsection{Affine edge encoding and edge variable}\label{subsec:encoding}

Fix $\epsilon>0$ and define $C_n=\pi-\epsilon\lambda_n$, the pushforward measure
$\mu_C:=\sum_n m_n\,\delta_{C_n}$, the edge variable $y:=\pi-C\downarrow0$, and
\begin{equation}
\NC(C):=\mu_C([C,\pi))=\#\{n:\ C_n\ge C\}.
\end{equation}
We write
\begin{equation}
\rho_{\mathrm{edge}}(C):=-\,\frac{d}{dC}\NC(C)\ (\ge0)
\end{equation}
in the Stieltjes/distributional sense (or after a standard smoothing). When emphasizing the bulk regime $C\to-\infty$, we use the synonymous notation
\begin{equation}
\rho_{\mathrm{bulk}}(C):=-\,\frac{d}{dC}\NC(C),
\end{equation}
i.e.\ the same distributional derivative, with the name indicating the asymptotic regime.

\begin{lemma}[Monotonicity of the encoding]\label{lem:mono-encoding}
Fix $\epsilon>0$ and set $C=\pi-\epsilon\lambda$. Then the map $\lambda\mapsto C$ is strictly decreasing. Consequently,
\begin{equation}
\lambda_i<\lambda_j \ \Longleftrightarrow\ C_i>C_j,\qquad
\lambda_i=\lambda_j \ \Longleftrightarrow\ C_i=C_j,
\end{equation}
so the pushforward preserves multiplicities and order.
\end{lemma}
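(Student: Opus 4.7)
The plan is to prove the lemma from the single observation that $C(\lambda) := \pi - \epsilon\lambda$ is affine with derivative $-\epsilon$, which is strictly negative by the standing hypothesis $\epsilon > 0$. Strict monotonicity of an affine map with nonzero slope is elementary, so I would state this in one line. From strict decrease, the two equivalences then follow immediately: multiplying $\lambda_i < \lambda_j$ by $-\epsilon$ reverses the inequality and adding $\pi$ gives $C_i > C_j$, and the converse runs the same algebra backwards. For the equality case, the $\Rightarrow$ direction is by the definition $C_n = \pi - \epsilon\lambda_n$, and the $\Leftarrow$ direction uses injectivity, which is automatic for any strictly monotone map on $\mathbb{R}$.

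For the pushforward assertion on multiplicities and order, I would either appeal directly to the definition $\mu_C := \sum_n m_n\,\delta_{C_n}$ fixed in Section~\ref{subsec:encoding}, or view $\mu_C$ as $T_{\ast}\mu_\Delta$ with $T(\lambda) = \pi - \epsilon\lambda$ a homeomorphism of $\mathbb{R}$. Either route yields preservation of individual atom weights: the mass assigned by $\mu_C$ to the location $C_n$ is $\sum_{i:\ C_i = C_n} m_i$, which by the already–established equivalence $C_i = C_n \iff \lambda_i = \lambda_n$ equals $\sum_{i:\ \lambda_i = \lambda_n} m_i = m_n$. Order reversal is precisely the content of the strict decrease of $T$, so nothing further is needed.

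There is no genuine obstacle here; the lemma is a bookkeeping statement that legitimizes the subsequent use of the edge variable $y = \epsilon\lambda$ as an order–reversed rescaling of the Laplace spectrum, and in particular underlies the identity $\NC(C) = N_\Delta\bigl((\pi-C)/\epsilon\bigr)$ (modulo the conventional choice of half–open interval at atoms, which the definition of $\NC$ via $[C,\pi)$ already accommodates). I expect to present the final proof in three or four lines, keeping the focus on making the sign and order conventions explicit so that later Tauberian arguments can be applied unambiguously.
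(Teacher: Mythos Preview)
Your proposal is correct and follows the same approach as the paper: the paper's proof is the single line ``Immediate from $dC/d\lambda=-\epsilon<0$,'' and your argument is an expanded version of exactly this observation. The additional detail you supply on multiplicity preservation via injectivity is sound but optional; the paper leaves it implicit.
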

\begin{proof}
Immediate from $dC/d\lambda=-\epsilon<0$.
\qed
\end{proof}

\begin{remark}
This strict monotonicity is the key reason why multiplicities and ordering transfer exactly to the edge-variable encoding.
\end{remark}

\subsection{Exact composition identity}\label{subsec:exact-identity}

The following lemma is the algebraic core of the paper: it shows that the
edge-variable counting function is identical to the classical counting function
evaluated at a rescaled argument, making every asymptotic result about $N_\Delta$
immediately available for $N_{\mu_C}$.

\begin{lemma}[Exact composition identity]\label{lem:exact}
For every $C<\pi$ one has
\begin{equation}
\NC(C)=\#\{n:\lambda_n\le(\pi-C)/\epsilon\}
= N_\Delta\!\big((\pi-C)/\epsilon\big).
\end{equation}
\end{lemma}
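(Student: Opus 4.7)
The plan is to unwind the definitions and invoke the strict monotonicity of Lemma \ref{lem:mono-encoding}; the statement is essentially algebraic, asserting equality of two index sets. First I would start from the defining expression $\NC(C)=\#\{n:C_n\ge C\}$ and substitute $C_n=\pi-\epsilon\lambda_n$. Since $\epsilon>0$, the inequality $\pi-\epsilon\lambda_n\ge C$ rearranges (without flipping) to $\lambda_n\le(\pi-C)/\epsilon$, which is exactly the endpoint form of the equivalence recorded in Lemma \ref{lem:mono-encoding}. Hence $\{n:C_n\ge C\}=\{n:\lambda_n\le(\pi-C)/\epsilon\}$ as sets of indices.

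Second, I would match the cardinality of the resulting set with $N_\Delta((\pi-C)/\epsilon)$ by direct appeal to its definition $N_\Delta(\Lambda)=\#\{n:\lambda_n\le\Lambda\}$ at $\Lambda=(\pi-C)/\epsilon$. Multiplicities transfer automatically: the index $n$ ranges over the enumeration in which each eigenvalue of $-\Delta_g$ appears $m_n$ times, and by Lemma \ref{lem:mono-encoding} the pushforward preserves these coincidences ($\lambda_i=\lambda_j\Leftrightarrow C_i=C_j$), so no double counting or loss is introduced.

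There is no real obstacle; the one point requiring care is keeping the boundary behavior consistent. The definition of $\NC(C)$ uses the half-open interval $[C,\pi)$, which under the monotone affine change of variable matches the half-line $(-\infty,(\pi-C)/\epsilon]$ used for $N_\Delta$, because $\pi-\epsilon\lambda_n<\pi$ for all $\lambda_n>0$ while the closed inequality $C_n\ge C$ corresponds to the closed inequality $\lambda_n\le(\pi-C)/\epsilon$. The ground state $\lambda_0=0$ maps to $C_0=\pi$, which lies outside $[C,\pi)$ but is in any case a single index and irrelevant to the asymptotics; if desired, one can absorb it by regarding $[C,\pi)$ as $[C,\pi]\setminus\{\pi\}$ and noting the two conventions differ by at most $m_0=1$. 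The lemma is thus a bookkeeping identity that will later allow Tauberian theorems for $N_\Delta$ to be transported verbatim to $\NC$.
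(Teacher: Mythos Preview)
Your argument is correct and essentially identical to the paper's proof, which is a one-line observation that strict monotonicity of $\lambda\mapsto C$ gives $\{n:C_n\ge C\}=\{n:\lambda_n\le(\pi-C)/\epsilon\}$ with multiplicities. Your extra care about the endpoint $C_0=\pi$ and the half-open convention $[C,\pi)$ catches a minor discrepancy the paper glosses over, but as you note it affects the count by at most $m_0=1$ and is irrelevant to all subsequent asymptotics.
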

\begin{proof}
Since $C=\pi-\epsilon\lambda$ is strictly decreasing,
$\{n: C_n\ge C\}=\{n:\lambda_n\le(\pi-C)/\epsilon\}$, with multiplicities.
\qed
\end{proof}

\subsection{Regular variation toolkit}\label{subsec:RV}

A positive function $f$ is in $\RV_\alpha$ if $f(tu)/f(t)\to u^\alpha$ as $t\to\infty$ for each $u>0$.
If $f(x)=x^\alpha \ell(x)$ with $\ell\in\RV_0$ (slowly varying), then $f\in\RV_\alpha$; \cite[§§1.2--1.7]{BGT87}.

\subsection{Standing assumptions}\label{subsec:assumptions}

For the reader's convenience we collect all hypotheses used in the paper
into a single assumption block.

\begin{assumption}[Standing framework]\label{ass:framework}
We assume throughout:

(i) $(M^d,g)$ satisfies Assumption~\ref{ass:smoothness};

(ii) Weyl's law holds
\begin{equation}
N_\Delta(\Lambda)\sim \gamma_d\,\Lambda^{d/2}\qquad(\Lambda\to\infty),
\end{equation}
with the classical remainder $O(\Lambda^{(d-1)/2})$ when available; \cite{Hoermander1968,DuistermaatGuillemin1975,Seeley1978,SafarovVassiliev1997,Ivrii1998} and modern expositions \cite{Grieser2009,Sogge2014,Zelditch2017};

(iii) a fixed scaling parameter $\epsilon>0$ defines the affine encoding $C=\pi-\epsilon\lambda$;

(iv) we write $\omega_d:=\mathrm{Vol}(B_d)=\pi^{d/2}/\Gamma(\tfrac d2+1)$ for the volume of the unit ball in $\mathbb{R}^d$;

(v) in Tauberian steps we tacitly use standard smoothings and the monotone density theorem for regularly varying functions \cite[Thm.~1.7.2]{BGT87}.

For manifolds with boundary we appeal to the two-term Weyl expansion; \cite[Ch.~1]{Ivrii1998}, \cite[Ch.~8]{SafarovVassiliev1997}.
\end{assumption}

\section{Weyl Law in the Edge Variable}\label{sec:edge-weyl}

The rigidity theorem (Theorem~\ref{thm:main}) singles out the affine encoding as
the unique preserver of the geometric density exponent.
This section records the positive side of that rigidity: what the affine encoding
actually delivers.
The key identity $N_{\mu_C}(C)=N_\Delta((\pi-C)/\epsilon)$
(Lemma~\ref{lem:exact}) reduces every asymptotic question about $N_{\mu_C}$
to the corresponding question about $N_\Delta$, so the classical Weyl law transfers
immediately; the remainder estimate, uniformity in $\epsilon$, and the two-term
boundary case are then recorded as separate propositions.

\subsection{Edge-variable Weyl (Abelian direction)}\label{subsec:abelian}

The following theorem establishes the Weyl law in the edge variable and its
converse; the converse is the reconstruction step that lets one read off $d$
and $\gamma_d$ from bulk edge-variable data.

\begin{theorem}[Edge-variable Weyl and reconstruction]\label{thm:edge-recon}
Let $(M^d,g)$ be compact without boundary and suppose the Laplace--Beltrami
counting function satisfies Weyl's law
\begin{equation}
N_\Delta(\Lambda)\sim \gamma_d\,\Lambda^{d/2}\qquad(\Lambda\to\infty),
\end{equation}
with $\gamma_d>0$. Fix $\epsilon>0$, encode $C=\pi-\epsilon\lambda$, and set
\begin{equation}
\NC(C):=\#\{n:\ C_n\ge C\}\quad(C<\pi),\qquad
\rho_{\mathrm{bulk}}(C):=-\frac{d}{dC}\NC(C)\ (\ge 0)
\end{equation}
in the Stieltjes/distributional sense (or after a standard smoothing). Then, as $C\to-\infty$ (equivalently, $(\pi-C)/\epsilon\to\infty$),
\begin{equation}\label{eq:edge-weyl-asymp}
\NC(C)\sim \gamma_d\,\epsilon^{-d/2}\,(\pi-C)^{d/2},
\end{equation}
and
\begin{equation}\label{eq:edge-density-asymp}
\rho_{\mathrm{bulk}}(C)\sim \frac{d}{2}\,\gamma_d\,\epsilon^{-d/2}\,
(\pi-C)^{\frac{d-2}{2}}.
\end{equation}
Conversely, if for some $A>0$ and $\alpha>0$,
\begin{equation}
\NC(C)\sim A\,(\pi-C)^\alpha\qquad(C\to-\infty),
\end{equation}
and $\NC$ is ultimately monotone, then necessarily
\begin{equation}
d=2\alpha,\qquad \gamma_d=A\,\epsilon^{d/2}.
\end{equation}
\end{theorem}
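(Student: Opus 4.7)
Plan of proof. The forward direction collapses to a one–line substitution in Lemma~\ref{lem:exact}: setting $\Lambda := (\pi-C)/\epsilon$, the identity $\NC(C) = \NDelta(\Lambda)$ holds for every $C<\pi$, and the regime $C\to-\infty$ coincides with $\Lambda\to\infty$. Plugging Weyl's law $\NDelta(\Lambda)\sim\gamma_d\Lambda^{d/2}$ into this identity and pulling $\epsilon^{-d/2}$ out of the power yields \eqref{eq:edge-weyl-asymp} at once.

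For the density statement \eqref{eq:edge-density-asymp} I would pass to the edge variable $y := \pi-C$ and set $U(y) := \NC(\pi-y) = \NDelta(y/\epsilon)$. By Lemmas~\ref{lem:counting-properties} and~\ref{lem:mono-encoding}, $U$ is nondecreasing on $(0,\infty)$, and by the previous step $U(y)\sim\gamma_d\epsilon^{-d/2}y^{d/2}$ is regularly varying of positive index $d/2$. The monotone density theorem for regularly varying functions (\cite[Thm.~1.7.2]{BGT87}, admitted by Assumption~\ref{ass:framework}(v)) then promotes this to the Stieltjes derivative $dU/dy\sim\tfrac{d}{2}\gamma_d\epsilon^{-d/2}y^{(d-2)/2}$. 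Since $\rho_{\mathrm{bulk}}(C) = -d\NC/dC = dU/dy$ (the Jacobian $dy/dC=-1$ cancels the sign), \eqref{eq:edge-density-asymp} follows.

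For the converse I would run the identity of Lemma~\ref{lem:exact} in the opposite direction: the hypothesis $\NC(C)\sim A(\pi-C)^\alpha$ as $C\to-\infty$ becomes $\NDelta(\Lambda) = \NC(\pi-\epsilon\Lambda)\sim(A\epsilon^\alpha)\Lambda^\alpha$ as $\Lambda\to\infty$. Compared against Weyl's law $\NDelta(\Lambda)\sim\gamma_d\Lambda^{d/2}$ from Assumption~\ref{ass:framework}(ii), uniqueness of the exponent and the prefactor in a power-law asymptotic — valid because $\NDelta$ is monotone and, by hypothesis, $\NC$ is ultimately monotone — forces $\alpha = d/2$ and $A\epsilon^\alpha=\gamma_d$, i.e.\ $d=2\alpha$ and $\gamma_d=A\epsilon^{d/2}$.

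The only genuine subtlety is the density assertion: $\NC$ has atomic jumps, so $\rho_{\mathrm{bulk}} = -d\NC/dC$ must be read in the Stieltjes/distributional sense (or after a standard smoothing). This is precisely why Assumption~\ref{ass:framework}(v) explicitly permits the monotone density theorem in the background; once invoked, its hypotheses are immediate here because Lemma~\ref{lem:mono-encoding} guarantees that monotonicity survives the affine pushforward to the edge variable. Everything else is algebraic substitution through Lemma~\ref{lem:exact}.
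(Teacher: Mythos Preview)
Your proposal is correct and follows essentially the same route as the paper: the exact identity $\NC(C)=\NDelta((\pi-C)/\epsilon)$ from Lemma~\ref{lem:exact}, Weyl's law for \eqref{eq:edge-weyl-asymp}, the monotone density theorem applied to $U(y)=\NC(\pi-y)$ for \eqref{eq:edge-density-asymp}, and the substitution $\Lambda=(\pi-C)/\epsilon$ run backwards for the converse. The only cosmetic difference is that the paper interposes a smoothing step (Proposition~\ref{prop:smooth-N}) before invoking monotone density, whereas you observe directly that $U$ is nondecreasing as a composition of $\NDelta$ with $y\mapsto y/\epsilon$; both are adequate here.
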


\begin{proof}
Since $C=\pi-\epsilon\lambda$ is strictly decreasing, the exact identity
\begin{equation}
\NC(C)=N_\Delta\!\Big(\frac{\pi-C}{\epsilon}\Big)\qquad(C<\pi)
\end{equation}
holds with multiplicities (Lemma~\ref{lem:exact}). With $\Lambda=(\pi-C)/\epsilon$ and Weyl's law we get \eqref{eq:edge-weyl-asymp}:
\begin{equation}
\NC(C)=N_\Delta(\Lambda)\sim \gamma_d\,\Lambda^{d/2}
=\gamma_d\,\epsilon^{-d/2}(\pi-C)^{d/2}.
\end{equation}
For the density, put $y:=\pi-C\to\infty$ and $F(y):=\NC(\pi-y)=N_\Delta(y/\epsilon)$.
Then
\begin{equation}
F(y)\sim \gamma_d\,\epsilon^{-d/2}\,y^{d/2}=y^{d/2}\,\ell(y),
\quad\text{with }\ \ell(y)\equiv \gamma_d\,\epsilon^{-d/2}\in \RV_0.
\end{equation}
$F$ is monotone as a counting function; we apply the smoothing of
Prop.~\ref{prop:smooth-N} only to interpret $F'$ as a classical derivative,
without affecting the asymptotics.
By Karamata's differentiation (monotone density) theorem for regularly
varying functions (Bingham--Goldie--Teugels \cite[§§1.4--1.7, Thm.~1.7.2]{BGT87};
see also \cite{Grieser2009,Sogge2014}),
if $F(y)=y^\alpha\ell(y)$ with $\alpha>0$ and $\ell\in\RV_0$ is ultimately monotone,
then $F'(y)\sim \alpha y^{\alpha-1}\ell(y)$. Hence
\begin{equation}
F'(y)\sim \frac{d}{2}\,\gamma_d\,\epsilon^{-d/2}\,y^{\frac{d-2}{2}}.
\end{equation}
Since $\rho_{\mathrm{bulk}}(C)=-\frac{d}{dC}\NC(C)=\frac{d}{dy}F(y)$ with $y=\pi-C$,
\eqref{eq:edge-density-asymp} follows and $\rho_{\mathrm{bulk}}\ge 0$.

For the converse, with $\Lambda=(\pi-C)/\epsilon$,
\begin{equation}
N_\Delta(\Lambda)=\NC(\pi-\epsilon\Lambda)\sim A(\epsilon\Lambda)^\alpha
=A\,\epsilon^\alpha\,\Lambda^\alpha\qquad(\Lambda\to\infty),
\end{equation}
so necessarily $\alpha=d/2$ and $\gamma_d=A\,\epsilon^{d/2}$.
\qed
\end{proof}

\begin{corollary}[Practical recovery from bulk edge-variable data]\label{cor:recovery}
Let $y:=\pi-C\to\infty$. If
\begin{equation}
\NC(\pi-y)\sim A\,y^{\alpha}\quad(y\to\infty)
\end{equation}
with ultimate monotonicity (or after smoothing as in Prop.~\ref{prop:smooth-N}), then
\begin{equation}
d=2\alpha,\qquad \gamma_d=A\,\epsilon^{d/2}.
\end{equation}
The log--log slope $\alpha=\lim_{y\to\infty}\frac{d\log \NC(\pi-y)}{d\log y}$ yields $d=2\alpha$, and
\begin{equation}
\gamma_d=\lim_{y\to\infty}\frac{\NC(\pi-y)}{y^{d/2}}\ \epsilon^{d/2}.
\end{equation}
The same holds using the bulk density $\rho_{\mathrm{bulk}}(C)\sim B\,y^{\alpha-1}$, where $d=2\alpha$ and $B=\tfrac{d}{2}\gamma_d\,\epsilon^{-d/2}$.
In practice, given samples of $N_{\mu_C}(C)$ for large negative $C$, consistent estimators are
\begin{equation}
\widehat d(C) := 2\,\frac{d\log N_{\mu_C}(C)}{d\log(\pi-C)},
\qquad
\widehat\gamma_d(C) := \epsilon^{\widehat d(C)/2}\,\frac{N_{\mu_C}(C)}{(\pi-C)^{\widehat d(C)/2}};
\end{equation}
in discrete data, replace the derivative by a local log--log slope.
Clustering contributes only bounded microscopic oscillations and does not affect the leading
bulk asymptotics (see Lemma~\ref{lem:avg-multiplicity}).
\end{corollary}

\subsection{Weyl remainder in the edge variable (bulk regime)}\label{subsec:remainder}

The next propositions quantify how the Weyl remainder and the uniformity in
$\epsilon$ transfer to the edge variable, and extend the main asymptotic to
the smoothed density.

\begin{proposition}[Transfer of remainder]\label{prop:remainder}
If $\NDelta(\Lambda)=\gamma_d\Lambda^{d/2}+O(\Lambda^{(d-1)/2})$ as $\Lambda\to\infty$ (see \cite{Hoermander1968,DuistermaatGuillemin1975,Seeley1978,SafarovVassiliev1997,Ivrii1998}), then, as $C\to-\infty$ (equivalently, $\Lambda=(\pi-C)/\epsilon\to\infty$),
\begin{equation}
\NC(C)=\gamma_d\,\epsilon^{-d/2}(\pi-C)^{d/2}
+O\!\left(\epsilon^{-(d-1)/2}(\pi-C)^{(d-1)/2}\right),
\end{equation}
with an implied constant depending only on $(M,g)$, hence uniform for $\epsilon$ in compact subsets of $(0,\infty)$.
\end{proposition}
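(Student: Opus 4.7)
The plan is essentially a one-line change of variables made rigorous by the exact composition identity. Since Lemma~\ref{lem:exact} gives $\NC(C)=N_\Delta((\pi-C)/\epsilon)$ for all $C<\pi$ with multiplicities preserved, nothing genuinely new has to be proved: the hypothesized two-term Weyl expansion for $N_\Delta$ simply gets pushed forward through a decreasing affine change of variable. No Tauberian machinery is needed (in contrast to Theorem~\ref{thm:edge-recon}) because the identity is pointwise exact rather than asymptotic.

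Concretely, I would set $\Lambda:=(\pi-C)/\epsilon$, observe that $C\to-\infty$ is equivalent to $\Lambda\to\infty$ for fixed $\epsilon>0$, and invoke the hypothesis: there exist $\Lambda_0,K>0$ depending only on $(M,g)$ such that
\begin{equation}
\bigl|N_\Delta(\Lambda)-\gamma_d\Lambda^{d/2}\bigr|\le K\,\Lambda^{(d-1)/2}\quad(\Lambda\ge\Lambda_0).
\end{equation}
Substituting back and using $\Lambda^{d/2}=\epsilon^{-d/2}(\pi-C)^{d/2}$ and $\Lambda^{(d-1)/2}=\epsilon^{-(d-1)/2}(\pi-C)^{(d-1)/2}$, together with the exact identity, yields
\begin{equation}
\bigl|\NC(C)-\gamma_d\epsilon^{-d/2}(\pi-C)^{d/2}\bigr|\le K\,\epsilon^{-(d-1)/2}(\pi-C)^{(d-1)/2}
\end{equation}
valid for all $C\le\pi-\epsilon\Lambda_0$, which is the asserted bound.

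For the uniformity claim, I would note that $K$ and $\Lambda_0$ come from the Hörmander/Ivrii/Safarov--Vassiliev estimates and depend only on geometric data of $(M,g)$, not on $\epsilon$; the full $\epsilon$-dependence has been made explicit in the factor $\epsilon^{-(d-1)/2}$. Hence, for any compact $[\epsilon_0,\epsilon_1]\subset(0,\infty)$, the effective constant $K\epsilon^{-(d-1)/2}$ is bounded by $K\epsilon_0^{-(d-1)/2}$, giving uniformity.

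Honestly, there is no real obstacle in this proposition; it is a bookkeeping exercise reflecting that the affine encoding is an exact affine bijection on the spectrum. The only point worth stating clearly (so that the reader does not mistake it for a limit statement) is that the exact composition identity bypasses any smoothing or monotone-density argument: the remainder transfers verbatim, with the $\epsilon$-powers tracked explicitly.
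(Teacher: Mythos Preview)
Your proof is correct and follows exactly the paper's approach: invoke the exact composition identity (Lemma~\ref{lem:exact}) with $\Lambda=(\pi-C)/\epsilon$ and propagate the Weyl remainder through the substitution, tracking the $\epsilon$-powers explicitly. The paper's own proof is a two-line sketch of precisely this; your write-up just spells out the constants $K,\Lambda_0$ and the uniformity-in-$\epsilon$ argument more carefully.
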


\begin{proof}
Apply Lemma~\ref{lem:exact} with $\Lambda=(\pi-C)/\epsilon$ and propagate the remainder. The substitution $\Lambda=(\pi-C)/\epsilon$ and monotonicity preserve both the order and form of the remainder.
\qed
\end{proof}

\begin{proposition}[Uniformity in $\epsilon$ in the bulk regime]\label{prop:uniform-eps}
Fix $0<\epsilon_0<\epsilon_1<\infty$ and let $Y:=(\pi-C)/\epsilon$. Then, as $Y\to\infty$ (equivalently $C\to-\infty$),
\begin{equation}
\sup_{\epsilon\in[\epsilon_0,\epsilon_1]}
\left|\frac{\NC(C)}{\gamma_d\,\epsilon^{-d/2}(\pi-C)^{d/2}}-1\right|\;=\;
\sup_{\epsilon\in[\epsilon_0,\epsilon_1]}
\left|\frac{N_\Delta(Y)}{\gamma_d\,Y^{d/2}}-1\right|\ \longrightarrow\ 0.
\end{equation}
\end{proposition}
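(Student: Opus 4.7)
The plan is to reduce the statement to two observations: an algebraic identity that produces the equality of the two suprema in the display, and then Weyl's law itself for the asymptotic limit.

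First I would establish the equality. By Lemma~\ref{lem:exact}, $\NC(C)=\NDelta((\pi-C)/\epsilon)=\NDelta(Y)$. Substituting $\pi-C=\epsilon Y$ into the normalizing denominator gives
\begin{equation*}
\gamma_d\,\epsilon^{-d/2}(\pi-C)^{d/2}=\gamma_d\,\epsilon^{-d/2}(\epsilon Y)^{d/2}=\gamma_d\,Y^{d/2}.
\end{equation*}
Consequently the two ratios inside the absolute value signs are pointwise equal for every admissible $(\epsilon,C)$, and so are their suprema over any $\epsilon$-set. This step is purely bookkeeping once the exact composition identity is in hand.

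Second I would reduce the convergence claim to Weyl's law plus a uniform escape of $Y$. By Assumption~\ref{ass:framework}(ii), $\NDelta(Y)/(\gamma_d Y^{d/2})\to 1$ as $Y\to\infty$. Given $\eta>0$, choose $Y_0$ so that the deviation is below $\eta$ for $Y\ge Y_0$. For $\epsilon\in[\epsilon_0,\epsilon_1]$ we have
\begin{equation*}
Y=\frac{\pi-C}{\epsilon}\ \ge\ \frac{\pi-C}{\epsilon_1},
\end{equation*}
so the requirement $Y\ge Y_0$ holds uniformly in $\epsilon$ as soon as $C\le \pi-\epsilon_1 Y_0$; past this threshold the right-hand supremum is bounded by $\eta$, and then so is the left-hand supremum by the identity of Step~1.

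There is no serious obstacle here: compactness of $[\epsilon_0,\epsilon_1]$ turns pointwise Weyl convergence into uniform convergence essentially for free, because the worst case over $\epsilon$ is attained at the upper endpoint $\epsilon=\epsilon_1$, which gives the smallest value of $Y$ at a fixed $C$. The lower bound $\epsilon_0>0$ is used only to keep $Y$ finite at a fixed $C$; the uniform escape rate is controlled entirely by $\epsilon_1<\infty$. The same reasoning, applied to the stronger hypothesis of Proposition~\ref{prop:remainder}, upgrades the convergence to the uniform remainder bound $O((\pi-C)^{-1/2})$, which is worth flagging as a one-line corollary.
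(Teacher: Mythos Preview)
Your proposal is correct and follows the same route as the paper's proof: invoke the exact composition identity (Lemma~\ref{lem:exact}) to rewrite the ratio as $N_\Delta(Y)/(\gamma_d Y^{d/2})$, then apply Weyl's law. You are in fact more explicit than the paper about the uniformity step---the paper's proof simply drops the supremum on the right side and appeals directly to $N_\Delta(Y)\sim\gamma_d Y^{d/2}$, whereas you spell out that the worst-case $Y$ at fixed $C$ occurs at $\epsilon=\epsilon_1$, which is a helpful clarification.
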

\begin{proof}
By Lemma~\ref{lem:exact}, $\NC(C)=N_\Delta\!\big((\pi-C)/\epsilon\big)=N_\Delta(Y)$ with $Y=(\pi-C)/\epsilon$. Hence
\begin{equation}
\sup_{\epsilon\in[\epsilon_0,\epsilon_1]}
\left|\frac{\NC(C)}{\gamma_d\,\epsilon^{-d/2}(\pi-C)^{d/2}}-1\right|
=\left|\frac{N_\Delta(Y)}{\gamma_d\,Y^{d/2}}-1\right|\xrightarrow[Y\to\infty]{}0,
\end{equation}
since $N_\Delta(Y)\sim\gamma_d Y^{d/2}$. 
\qed
\end{proof}

\begin{proposition}[Remainder for bulk density]\label{prop:density-remainder}
If $\NDelta(\Lambda)=\gamma_d\Lambda^{d/2}+O(\Lambda^{(d-1)/2})$ as $\Lambda\to\infty$, then after standard smoothing (as in Prop.~\ref{prop:smooth-N}), the bulk density satisfies
\begin{equation}
\rho_{\mathrm{bulk}}(C)=\tfrac{d}{2}\gamma_d\,\epsilon^{-d/2}(\pi-C)^{(d-2)/2}
+O\!\left(\epsilon^{-(d-2)/2}(\pi-C)^{(d-3)/2}\right),
\end{equation}
as $C\to-\infty$, using the monotone density theorem \cite[Thm.~1.7.2]{BGT87}.
\end{proposition}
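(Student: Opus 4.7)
The plan is to transfer the counting-function remainder of Proposition~\ref{prop:remainder} to the distributional derivative of $\NC$ via a controlled smoothing, and then identify the smoothed derivative with $\rho_{\mathrm{bulk}}$ through the monotone density theorem. First I would set $y:=\pi-C\to\infty$ and $F(y):=\NC(\pi-y)=N_\Delta(y/\epsilon)$, so that $\rho_{\mathrm{bulk}}(C)=\tfrac{d}{dy}F(y)$ in the Stieltjes sense. Proposition~\ref{prop:remainder} yields
\[
F(y)=G(y)+R(y),\qquad G(y)=\gamma_d\,\epsilon^{-d/2}\,y^{d/2},\qquad R(y)=O\!\left(\epsilon^{-(d-1)/2}\,y^{(d-1)/2}\right),
\]
and the smooth leading term differentiates \emph{exactly} to $G'(y)=\tfrac{d}{2}\gamma_d\,\epsilon^{-d/2}\,y^{(d-2)/2}$; the entire task thus reduces to bounding the contribution of $R$ to the smoothed derivative.

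Second, I would invoke the smoothing apparatus of Proposition~\ref{prop:smooth-N}: pick a nonnegative, symmetric, compactly supported $C^\infty$ bump $\psi$ with $\int\psi=1$, set $\psi_h(t)=h^{-1}\psi(t/h)$, and work with $F_h':=F'*\psi_h=G_h'+R_h'$. For the smooth part, Taylor expansion against the symmetric kernel kills the linear moment and yields $G_h'(y)-G'(y)=O(h^2\,y^{d/2-3})$. For the rough part, integrating by parts and using the pointwise bound on $R$ together with $\|\psi_h'\|_{L^1}=h^{-1}\|\psi'\|_{L^1}$ gives $|R_h'(y)|\le C\,\epsilon^{-(d-1)/2}\,y^{(d-1)/2}\,h^{-1}$ uniformly for $h\le y/2$. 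Optimizing the smoothing window $h=h(y)$ to balance these two errors, and then invoking the Karamata--Bingham--Goldie--Teugels monotone density theorem \cite[Thm.~1.7.2]{BGT87} to identify the smoothed and Stieltjes densities in the regularly varying regime, would deliver the claimed bulk-density remainder.

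The main obstacle is the classical Tauberian gap: a pointwise $L^\infty$ bound on a counting function does not automatically yield a pointwise bound on its atomic derivative, because of the jump structure of $\NC$. The smoothing circumvents this, but one must (a) verify that Proposition~\ref{prop:smooth-N} propagates the Weyl remainder faithfully to the smoothed density with errors dominated by the transferred bound, and (b) check that the optimal $h$ actually produces the stated exponents $(d-3)/2$ in $(\pi-C)$ and $-(d-2)/2$ in $\epsilon$, rather than a weaker power arising from a crude balance. Sharper estimates on $R_h'$—for instance, replacing $\|R\|_{L^\infty([y-h,y+h])}$ by a local variation bound, or exploiting the Hörmander--Safarov--Vassiliev fine structure of the Weyl remainder for closed manifolds—may be needed to hit the stated exponent exactly; this is where the bulk of the technical effort will lie.
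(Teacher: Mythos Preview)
Your plan follows exactly the skeleton of the paper's one-sentence proof (transfer the counting remainder from Proposition~\ref{prop:remainder}, smooth as in Proposition~\ref{prop:smooth-N}, invoke the monotone density theorem), but you go much further in making the mechanism explicit---and in doing so you correctly expose a genuine gap that the paper's sketch glosses over.

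The issue is precisely the one you flag at the end. The monotone density theorem \cite[Thm.~1.7.2]{BGT87} delivers only the \emph{leading} asymptotic of the derivative; it does not propagate remainders. Your quantitative route via mollification is the right replacement, but the crude bound $|R_h'(y)|\lesssim h^{-1}\|R\|_{L^\infty([y-Mh,\,y+Mh])}=O(h^{-1}y^{(d-1)/2})$ balanced against the smooth-part error $O(h^2 y^{(d-6)/2})$ optimizes at $h\asymp y^{5/6}$ and yields only $O(y^{(3d-8)/6})$, strictly weaker than the claimed $O(y^{(d-3)/2})$. No choice of $h$ alone fixes this: to get $(d-3)/2$ from the rough term one would need $h\gtrsim y$, which then destroys the smooth-part approximation. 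So from the bare hypothesis $N_\Delta(\Lambda)=\gamma_d\Lambda^{d/2}+O(\Lambda^{(d-1)/2})$ the stated density remainder does not follow by smoothing and Karamata alone; your instinct that one must import additional structure---a local oscillation/Lipschitz bound on the Weyl remainder, as furnished by H\"ormander's pointwise spectral-function estimates---is correct and is what actually closes the gap. With such a bound (say $|E(\Lambda+s)-E(\Lambda)|\lesssim |s|\,\Lambda^{(d-3)/2}$ for $|s|\le 1$), the identity $(R*\psi_h')(y)=\int\bigl(R(y-t)-R(y)\bigr)\psi_h'(t)\,dt$ immediately gives $R_h'(y)=O(y^{(d-3)/2})$ for any bounded $h$, and the leading term is untouched.

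One small additional point worth noting: the $\epsilon$-exponent in the stated remainder, $-(d-2)/2$, does not match what formal differentiation of Proposition~\ref{prop:remainder} gives. Writing $\Lambda=y/\epsilon$ and differentiating in $y$ produces $O(\epsilon^{-(d-1)/2}y^{(d-3)/2})$, not $O(\epsilon^{-(d-2)/2}y^{(d-3)/2})$; you may want to track this in your write-up.
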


\begin{proof}
Apply the monotone density theorem to the remainder estimate from Proposition~\ref{prop:remainder}, using the same smoothing procedure as in Proposition~\ref{prop:smooth-N}.
\qed
\end{proof}

\subsection{Boundary case: two-term bulk expansion}\label{subsec:boundary}

Although Assumption~\ref{ass:smoothness} requires compact manifolds without
boundary, the following extension to the boundary case is standard
\cite{Seeley1978,SafarovVassiliev1997,Ivrii1998} and recorded for completeness.
When $M$ has smooth boundary, Weyl's law acquires a second term proportional to
the boundary area.
The following proposition shows that this two-term structure transfers intact
to the edge variable, with both terms rescaled by the appropriate power of $\epsilon$.

\begin{proposition}[Two-term bulk expansion for manifolds with boundary]\label{prop:boundary}
Let $(M^d,\partial M)$ be compact with smooth boundary and suppose
\begin{equation}
\NDelta(\Lambda)=\gamma_d\,\Lambda^{d/2}+\beta_d\,\Lambda^{(d-1)/2}+o\!\left(\Lambda^{(d-1)/2}\right)
\qquad(\Lambda\to\infty).
\end{equation}
Here and below we consider the standard Dirichlet or Neumann boundary conditions; see \cite[Ch.~1]{Ivrii1998}, \cite[Ch.~8]{SafarovVassiliev1997}.
Then, as $C\to-\infty$ (equivalently, $y:=\pi-C\to\infty$),
\begin{equation}
\NC(C)=\gamma_d\,\epsilon^{-d/2}(\pi-C)^{d/2}
+\beta_d\,\epsilon^{-(d-1)/2}(\pi-C)^{(d-1)/2}
+o\!\left((\pi-C)^{(d-1)/2}\right).
\end{equation}
\end{proposition}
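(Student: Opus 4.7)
The proof is essentially an exact algebraic substitution followed by bookkeeping of the error term, since the composition identity of Lemma~\ref{lem:exact} is purely set–theoretic and makes no use of the absence of boundary. My plan is to invoke Lemma~\ref{lem:exact} (whose statement and proof carry over verbatim to the boundary case under standard Dirichlet/Neumann conditions, because only strict monotonicity of $\lambda\mapsto C=\pi-\epsilon\lambda$ is used) to write
\begin{equation}
\NC(C)=N_\Delta\!\left(\tfrac{\pi-C}{\epsilon}\right),
\end{equation}
and then substitute the assumed two-term asymptotics into the right-hand side.

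Setting $\Lambda:=(\pi-C)/\epsilon$, the bulk regime $C\to-\infty$ is exactly $\Lambda\to\infty$, so the hypothesis applies. The leading term gives
\begin{equation}
\gamma_d\,\Lambda^{d/2}=\gamma_d\,\epsilon^{-d/2}(\pi-C)^{d/2},
\end{equation}
the subleading term gives
\begin{equation}
\beta_d\,\Lambda^{(d-1)/2}=\beta_d\,\epsilon^{-(d-1)/2}(\pi-C)^{(d-1)/2},
\end{equation}
and the remainder $o(\Lambda^{(d-1)/2})$ becomes $o\!\big(\epsilon^{-(d-1)/2}(\pi-C)^{(d-1)/2}\big)=o\!\big((\pi-C)^{(d-1)/2}\big)$, since $\epsilon>0$ is fixed and a constant factor can be absorbed into the $o(\cdot)$. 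Summing these three pieces yields the claimed expansion.

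The only point requiring a brief justification is that each ``$o$'' is preserved under the affine rescaling $\Lambda=(\pi-C)/\epsilon$: if $R(\Lambda)=o(\Lambda^{(d-1)/2})$ as $\Lambda\to\infty$, then for every $\eta>0$ there exists $\Lambda_0$ such that $|R(\Lambda)|\le\eta\,\Lambda^{(d-1)/2}$ for $\Lambda\ge\Lambda_0$; choosing $C$ with $(\pi-C)/\epsilon\ge\Lambda_0$ gives $|R((\pi-C)/\epsilon)|\le\eta\,\epsilon^{-(d-1)/2}(\pi-C)^{(d-1)/2}$, which is $o((\pi-C)^{(d-1)/2})$. No Tauberian machinery is needed here, because the statement concerns $\NC$ itself, not a derivative or smoothed density.

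I do not foresee a real obstacle; the proposition is a direct transport of the boundary two-term Weyl expansion through the exact identity of Lemma~\ref{lem:exact}. The only subtlety worth flagging is that the $\epsilon$–dependence in the subleading coefficient is $\epsilon^{-(d-1)/2}$, matching the transfer rule already used in Proposition~\ref{prop:remainder}; this shows that both coefficients $\gamma_d$ and $\beta_d$ (and therefore, in principle, both the volume and boundary-surface terms) are recoverable from bulk edge–variable data in the boundary case, exactly paralleling Corollary~\ref{cor:recovery}.
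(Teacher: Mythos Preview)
Your proof is correct and follows exactly the paper's approach: invoke the exact composition identity of Lemma~\ref{lem:exact} and substitute $\Lambda=(\pi-C)/\epsilon$ into the assumed two-term Weyl expansion. Your added justification that the $o(\cdot)$ remainder is preserved under the affine rescaling, and that Lemma~\ref{lem:exact} carries over verbatim to the boundary case, simply makes explicit what the paper leaves implicit in its one-line proof.
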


\begin{proof}
Insert $\Lambda=(\pi-C)/\epsilon$ into the two-term Weyl expansion and use Lemma~\ref{lem:exact}.
\qed
\end{proof}

\begin{proposition}[Stable smoothing of the bulk counting]\label{prop:smooth-N}
Let $\varphi\in C_c^\infty(\mathbb{R})$ be a nonnegative mollifier with $\int\varphi=1$, and set $\varphi_h(t):=h^{-1}\varphi(t/h)$. Define the smoothed counting function
\begin{equation}
\NC^{(h)}(C):=(\NC*\varphi_h)(C)=\int \NC(C-t)\,\varphi_h(t)\,dt.
\end{equation}
If $h=h(y)$ satisfies $h(y)=o(y)$ as $y=\pi-C\to\infty$, then
\begin{equation}
\NC^{(h)}(C)\sim \NC(C)\sim \gamma_d\,\epsilon^{-d/2}\,y^{d/2},
\end{equation}
and the derivative $\frac{d}{dC}\NC^{(h)}(C)$ exists with
\begin{equation}
-\frac{d}{dC}\NC^{(h)}(C)=:\rho_{\mathrm{bulk}}^{(h)}(C)\ \sim\ \tfrac{d}{2}\,\gamma_d\,\epsilon^{-d/2}\,y^{\frac{d-2}{2}}.
\end{equation}
\end{proposition}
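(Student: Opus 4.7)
The plan is to split the claim into two parts: (a) $\NC^{(h)}(C)\sim \NC(C)$ and (b) the density asymptotic for $\rho_{\mathrm{bulk}}^{(h)}$. Both rely on the compact support of the mollifier: fix $K$ with $\mathrm{supp}(\varphi)\subset[-K,K]$, so $\mathrm{supp}(\varphi_h)\subset[-Kh,Kh]$, and $Kh=o(y)$ where $y:=\pi-C$.

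For (a), I would use a monotone sandwich. Since $\NC(\cdot)$ is non-increasing in $C$ and $\varphi_h\ge 0$ has unit mass,
\begin{equation}
\NC(C+Kh)\ \le\ \NC^{(h)}(C)\ \le\ \NC(C-Kh).
\end{equation}
Theorem~\ref{thm:edge-recon} applied at the endpoints gives $\NC(C\mp Kh)\sim\gamma_d\epsilon^{-d/2}(y\pm Kh)^{d/2}$, and $(1\pm Kh/y)^{d/2}\to 1$ since $h/y\to 0$, so both bounds squeeze to $\gamma_d\epsilon^{-d/2}y^{d/2}$.

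For (b), smoothness of $\varphi$ makes $\NC^{(h)}$ of class $C^\infty$, and moving the derivative onto the mollifier gives
\begin{equation}
\rho_{\mathrm{bulk}}^{(h)}(C)\;=\;-\int \NC(C-t)\,\varphi_h'(t)\,dt\;=\;-\int F(y+t)\,\varphi_h'(t)\,dt,
\end{equation}
where $F(u):=\NC(\pi-u)=N_\Delta(u/\epsilon)$ by Lemma~\ref{lem:exact}. Substituting the Weyl asymptotic $F(u)\sim \gamma_d\epsilon^{-d/2}u^{d/2}$ and Taylor-expanding $(y+t)^{d/2}$ in $t/y$ on the scale $|t|\le Kh=o(y)$, the zeroth-order term vanishes since $\int\varphi_h'=0$, and the first-order term yields $\tfrac{d}{2}\gamma_d\epsilon^{-d/2}y^{(d-2)/2}$ using $\int t\,\varphi_h'(t)\,dt=-1$.

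The main technical obstacle is to absorb the Weyl remainder $R(u):=F(u)-\gamma_d\epsilon^{-d/2}u^{d/2}=o(u^{d/2})$ rigorously after one differentiation, since a crude bound against $\int|\varphi_h'|=O(1/h)$ does not a priori yield $o(y^{(d-2)/2})$. I would circumvent this by invoking Karamata's monotone density theorem (\cite[Thm.~1.7.2]{BGT87}) applied to the smoothed profile $F^{(h)}(y):=\NC^{(h)}(\pi-y)$: by part (a), $F^{(h)}\in\RV_{d/2}$, is smooth and non-decreasing, with non-negative classical density $(F^{(h)})'$. The remaining hypothesis of ultimate monotonicity of $(F^{(h)})'$ can then be secured either by a second, coarser mollification on a scale $h'\gg h$ with $h'=o(y)$ (which leaves the leading asymptotic unchanged by part (a) applied again) or by direct sign analysis of $(F^{(h)})''$ using the convolution representation above.
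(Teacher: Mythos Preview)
Your argument for part (a) is correct and cleaner than the paper's: where you use the monotone sandwich $\NC(C+Kh)\le\NC^{(h)}(C)\le\NC(C-Kh)$ together with $(1\pm Kh/y)^{d/2}\to1$, the paper instead writes $\NC^{(h)}(C)=\int F(y-hu)\,\varphi(u)\,du$ and invokes the Uniform Convergence Theorem for regularly varying functions plus Potter bounds and dominated convergence to show the integrand ratio tends to $1$. Your route is more elementary and exploits the monotonicity of $\NC$ directly.

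Part (b), however, has a real gap. Applying the monotone density theorem to $F^{(h)}$ requires the \emph{density} $(F^{(h)})'$ to be ultimately monotone, and neither of your proposed fixes delivers this. Since $(F^{(h)})'(y)=\sum_n m_n\,\varphi_h(y-\epsilon\lambda_n)$ is a superposition of compactly supported bumps, it oscillates; a second mollification at scale $h'\gg h$ produces $(F*\varphi_h*\varphi_{h'})'$, which is the same kind of object at a coarser scale and no more monotone---you have only relocated the obstruction, not removed it. Likewise $(F^{(h)})''=\sum_n m_n\,\varphi_h'(y-\epsilon\lambda_n)$ changes sign near every atom, so a ``direct sign analysis'' cannot succeed without further input. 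The paper avoids this by reversing the order of operations: it first applies the monotone density theorem to $F$ itself (in the Stieltjes/distributional sense admitted by the standing assumptions), obtaining $F'\in\RV_{(d-2)/2}$ with $F'(y)\sim\tfrac{d}{2}\gamma_d\,\epsilon^{-d/2}y^{(d-2)/2}$, and \emph{then} repeats the part-(a) convolution argument with $F'$ in place of $F$ to conclude $(F'*\varphi_h)(y)\sim F'(y)$. If you want to salvage your own order of operations, you would need a genuine Tauberian side condition on $(F^{(h)})'$ (slow oscillation, or a one-sided bound on the scale $y$) rather than monotonicity, and that does not follow from $h=o(y)$ alone.
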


\begin{proof}
Set $y:=\pi-C$ and $F(y):=\NC(\pi-y)$. By Lemma~\ref{lem:exact} and Weyl's law we have
\begin{equation}
F(y)=N_\Delta(y/\epsilon)\sim A\,y^\alpha,\qquad
A:=\gamma_d\,\epsilon^{-d/2},\ \ \alpha:=\tfrac d2>0,
\end{equation}
hence $F\in \RV_\alpha$ and is ultimately monotone. Let $\varphi\in C_c^\infty(\mathbb{R})$ be nonnegative with $\int\varphi=1$ and write $\varphi_h(t):=h^{-1}\varphi(t/h)$, where $h=h(y)$ satisfies $h(y)=o(y)$ as $y\to\infty$. Then
\begin{equation}
\NC^{(h)}(C)=(\NC*\varphi_h)(C)=(F*\varphi_h)(y)
= \int_{\mathbb{R}} F(y-t)\,\varphi_h(t)\,dt
= \int_{\mathbb{R}} F(y-hu)\,\varphi(u)\,du,
\end{equation}
after the change of variables $t=hu$.

Since $\varphi$ has compact support, say $\mathrm{supp}\,\varphi\subset[-M,M]$, we have $|hu|\le M h=o(y)$. By the Uniform Convergence Theorem for regularly varying functions \cite[Thm.~1.2.1]{BGT87} together with Potter bounds \cite[Thm.~1.5.6]{BGT87}, for every $\varepsilon>0$ there exists $y_0$ such that for all $y\ge y_0$ and $|u|\le M$,
\begin{equation}
\left|\frac{F(y-hu)}{F(y)}-1\right|\le\varepsilon.
\end{equation}
Hence the integrand $\frac{F(y-hu)}{F(y)}\varphi(u)$ is dominated by an integrable bound and converges pointwise (in $u$) to $\varphi(u)$. By dominated convergence,
\begin{equation}
\frac{\NC^{(h)}(C)}{F(y)}=\int \frac{F(y-hu)}{F(y)}\,\varphi(u)\,du \longrightarrow \int \varphi(u)\,du=1,
\end{equation}
so $\NC^{(h)}(C)\sim F(y)\sim \gamma_d\,\epsilon^{-d/2}\,y^{d/2}$ as $y\to\infty$.

For the derivative, convolution with a $C^\infty$ compactly supported kernel yields smoothness and
\begin{equation}
-\frac{d}{dC}\NC^{(h)}(C)=\frac{d}{dy}(F*\varphi_h)(y)=(F'*\varphi_h)(y)
=\int_{\mathbb{R}} F'(y-hu)\,\varphi(u)\,du,
\end{equation}
where the derivative is in the (tempered) distribution sense. Since $F\in \RV_\alpha$ with $\alpha>0$ and is ultimately monotone, the monotone density theorem \cite[Thm.~1.7.2]{BGT87} gives
\begin{equation}
F'(y)\sim \alpha A\,y^{\alpha-1}
=\tfrac{d}{2}\,\gamma_d\,\epsilon^{-d/2}\,y^{\frac{d-2}{2}}
\qquad(y\to\infty),
\end{equation}
so $F'\in \RV_{\alpha-1}$ and is ultimately locally bounded and of one sign. Applying the same Uniform Convergence / dominated--convergence argument to $F'$ yields
\begin{equation}
\frac{d}{dy}(F*\varphi_h)(y)=(F'*\varphi_h)(y)\sim F'(y).
\end{equation}
Recalling $y=\pi-C$, we conclude
\begin{equation}
\rho_{\mathrm{bulk}}^{(h)}(C)=-\frac{d}{dC}\NC^{(h)}(C)
\sim \tfrac{d}{2}\,\gamma_d\,\epsilon^{-d/2}\,y^{\frac{d-2}{2}},
\end{equation}
as claimed.
\qed
\end{proof}

\subsection{Heat-trace and zeta transfer}\label{subsec:heat-zeta}

The affine encoding transfers the heat trace and spectral zeta to the edge
variable by the same rescaling identity that underlies the Weyl transfer above.

\begin{proposition}[Heat and zeta transfer]\label{prop:heat-zeta}
Under the purely affine encoding $C_n=\pi-\epsilon\lambda_n$, define
\begin{equation}
H_{\mathrm{edge}}(s):=\sum_n m_n\,e^{-s(\pi-C_n)},\qquad
\zeta_{\mathrm{edge}}(u):=\sum_n m_n\,(\pi-C_n)^{-u}.
\end{equation}
Then for all $s>0$ and $\Re u>d/2$,
\begin{equation}
H_{\mathrm{edge}}(s)=\Theta_{\Delta}(\epsilon s),\qquad
\zeta_{\mathrm{edge}}(u)=\epsilon^{-u}\,\zeta_{\Delta}(u).
\end{equation}
Consequently, all Seeley--DeWitt heat invariants
\cite{MinakshisundaramPleijel1949,Seeley1967,Gilkey1995} and the pole structure
of $\zeta_{\mathrm{edge}}$ are preserved up to explicit $\epsilon$-scalings;
for constant-curvature spaces this gives $a_{2m}^{\mathrm{edge}}=\epsilon^{m-d/2}a_{2m}$
(see Section~\ref{subsec:const-curv}).
Non-affine perturbations $C_n=\pi-\epsilon\lambda_n+\delta(\lambda_n)$ with
$\delta=o(\lambda)$ break these exact identities; asymptotic recovery is treated
in Section~\ref{sec:stability}.
\end{proposition}

\begin{proof}
Since $\pi-C_n=\epsilon\lambda_n$ for all $n$, both identities follow by direct
substitution: $H_{\mathrm{edge}}(s)=\sum_n m_n e^{-\epsilon s\lambda_n}=\Theta_\Delta(\epsilon s)$
and $\zeta_{\mathrm{edge}}(u)=\epsilon^{-u}\sum_n m_n\lambda_n^{-u}=\epsilon^{-u}\zeta_\Delta(u)$.
\qed
\end{proof}

\section{Uniqueness of the affine encoding}\label{sec:uniqueness}

This section proves the two uniqueness results stated in Theorem~\ref{thm:main}.
The main result is Theorem~\ref{thm:ORV-uniqueness} in
Section~\ref{subsec:uniqueness-orv}: the bulk power-law constraint forces
$\phi\in\mathrm{RV}_1$ within the full O-regularly varying class.
Section~\ref{subsec:poly} recovers the polynomial case $k=1$ as a direct
specialization via de Bruijn's inversion theorem \cite{BGT87,deHaan1996};
the remaining subsections analyze prefactors and slowly varying corrections.

\subsection{Polynomial-type encodings and inverse asymptotics}\label{subsec:poly}

We write $\rho_{\mu_g}(C):=-\frac{d}{dC}N_{\mu_g}(C)$
in the Stieltjes/distributional sense (or after a standard smoothing).

The following specializes the rigidity to the polynomial class $\mathrm{RV}_k$;
the self-contained proof via de Bruijn inversion makes the exponent formula
explicit. The general O-RV statement (Theorem~\ref{thm:ORV-uniqueness}) is
established in Section~\ref{subsec:uniqueness-orv} below.

\begin{theorem}[Polynomial specialization]\label{thm:unique}
Let $g(\lambda)\sim a-b\,\lambda^{k}L(\lambda)$ as $\lambda\to\infty$ with
$a\in\mathbb{R}$, $b>0$, $k>0$, and $L\in \RV_0$ (slowly varying). Assume $g$ is
ultimately strictly decreasing so that $a$ is the upper edge of the pushforward
measure $\mu_g$ of $\mu_\Delta$ under $g$ (by strict monotonicity of $g$; cf. Lemma~\ref{lem:mono-encoding} for the affine case). Put $\phi(\lambda):=a-g(\lambda)$ and
$x:=a-C\downarrow 0$. Let $\Lambda$ be the asymptotic inverse of $\phi$.
Then, as $C\uparrow a$,
\begin{equation}
N_{\mu_g}(a-x)=N_\Delta(\Lambda(x))\in \RV_{\,d/(2k)},\qquad
\rho_{\mu_g}(C)\in \RV_{\,d/(2k)-1}.
\end{equation}
In particular, to reproduce the \emph{edge-variable} density exponent $(d-2)/2$ of
\eqref{eq:edge-density-asymp} one must have $k=1$, i.e.\ $g(\lambda)=a-b\lambda(1+o(1))$.
\end{theorem}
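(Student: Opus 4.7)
The plan is to reduce everything to the univariate composition $N_\Delta\circ\Lambda$ and then read off regular-variation indices by two applications of the BGT calculus.

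First I would set up the pushforward identity in the spirit of Lemma~\ref{lem:exact}. Since $g$ is ultimately strictly decreasing to $a$, for $C$ close to $a$ the event $\{g(\lambda_n)\ge C\}$ coincides with $\{\lambda_n\le \Lambda(x)\}$, where $x:=a-C$ and $\Lambda$ is a right-continuous asymptotic inverse of $\phi(\lambda):=a-g(\lambda)$. This yields the exact-in-the-limit identity
\begin{equation}
N_{\mu_g}(a-x)=N_\Delta\!\bigl(\Lambda(x)\bigr),
\end{equation}
with multiplicities preserved, exactly as in the affine case. This reduces the asymptotic analysis of $N_{\mu_g}$ to that of a composition of known regularly varying functions.

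Next I would extract the index of $\Lambda$. By hypothesis $\phi(\lambda)=b\lambda^k L(\lambda)(1+o(1))\in \RV_k$ with $k>0$, so by the inversion theorem for regularly varying functions \cite[Thm.~1.5.12]{BGT87} the asymptotic inverse satisfies $\Lambda\in \RV_{1/k}$ with $\Lambda(x)\sim (x/b)^{1/k}L^{\#}(x)$ for some $L^{\#}\in \RV_0$. Combining this with Weyl's law $N_\Delta(\Lambda)\sim\gamma_d\Lambda^{d/2}$ and the composition rule for regular variation, we obtain
\begin{equation}
N_{\mu_g}(a-x)\sim \gamma_d\,\Lambda(x)^{d/2}\in \RV_{d/(2k)}\qquad (x\downarrow 0^+,\ \text{i.e.}\ \Lambda(x)\to\infty),
\end{equation}
which is the first claim. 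For the density I would then invoke the monotone density theorem \cite[Thm.~1.7.2]{BGT87} in the same form used in the proof of Theorem~\ref{thm:edge-recon}: ultimate monotonicity of $N_{\mu_g}(a-\cdot)$ (inherited from the monotonicity of $N_\Delta$ and of $\Lambda$) together with the $\RV_{d/(2k)}$ behavior yields $\rho_{\mu_g}(C)\in \RV_{d/(2k)-1}$ with the expected leading constant, after standard smoothing if necessary (Proposition~\ref{prop:smooth-N}).

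Finally, the rigidity step is an exponent-matching: the density exponent $d/(2k)-1$ equals the affine-encoding value $(d-2)/2$ obtained in \eqref{eq:edge-density-asymp} if and only if $d/(2k)=d/2$, i.e.\ $k=1$. Since $d\ge 1$, no other $k>0$ can reproduce the geometric edge-variable density exponent, and the slowly varying factor $L$ is harmless at the level of the exponent (it only modulates the slowly varying prefactor). The expected main obstacle is purely technical: handling the asymptotic inverse of $\phi$ when $L$ is a nontrivial slowly varying factor, so that the composition $N_\Delta\circ\Lambda$ is controlled uniformly enough to justify the application of the monotone density theorem; this is a standard but slightly delicate use of the Uniform Convergence Theorem and Potter bounds \cite[Thms.~1.2.1, 1.5.6]{BGT87}, handled exactly as in the smoothing proof of Proposition~\ref{prop:smooth-N}.
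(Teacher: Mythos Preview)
Your proposal is correct and follows essentially the same route as the paper: establish the composition identity $N_{\mu_g}(a-x)=N_\Delta(\Lambda(x))$ from strict monotonicity, invoke de Bruijn inversion \cite[Thm.~1.5.12]{BGT87} to get $\Lambda\in\RV_{1/k}$, compose with Weyl's law to obtain $\RV_{d/(2k)}$, apply the monotone density theorem for the density exponent, and match $d/(2k)-1=(d-2)/2$ to force $k=1$. The only minor slip is the phrase ``strictly decreasing to $a$'' (in fact $g(\lambda)\to-\infty$; $a$ is the supremum, approached as $\lambda\downarrow$ small), and you should be aware that the regime in which Weyl applies is $\Lambda(x)\to\infty$, i.e.\ $x\to\infty$ (bulk), not $x\downarrow 0$---your parenthetical already flags this correctly.
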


\begin{proof}
We have $\phi(\lambda)=a-g(\lambda)\sim b\,\lambda^{k}L(\lambda)$, so $\phi\in\RV_k$
and is ultimately increasing. By inversion theory for regularly varying functions
(de Bruijn conjugacy; \cite[§1.5, Thm.~1.5.12]{BGT87}; see also \cite{deHaan1996}), there exists an asymptotic
inverse $\Lambda\in\RV_{1/k}$ with
\begin{equation}\label{eq:debruijn}
\Lambda(x)\sim \Big(\frac{x}{b}\Big)^{1/k}\,L^\ast(x)\qquad(x\downarrow 0),
\end{equation}
where $L^\ast\in\RV_0$ is characterized by
\begin{equation}
\big(L^\ast(x)\big)^{k}\,
L\!\Big(\Big(\frac{x}{b}\Big)^{\!1/k} L^\ast(x)\Big)\ \longrightarrow\ 1
\qquad(x\downarrow 0).
\end{equation}
Monotonicity yields the exact composition
$N_{\mu_g}(a-x)=N_\Delta(\Lambda(x))$. Applying Weyl's law gives
\begin{equation}
N_{\mu_g}(a-x)\sim \gamma_d\,\Lambda(x)^{d/2}\in \RV_{\,d/(2k)}\qquad(x\downarrow 0).
\end{equation}
Let $F(x):=N_{\mu_g}(a-x)$. By the monotone density theorem
(\cite[§1.7, Thm.~1.7.2]{BGT87}), $F'(x)\in \RV_{\,d/(2k)-1}$. Since
$\rho_{\mu_g}(C)=-\frac{d}{dC}N_{\mu_g}(C)=\frac{d}{dx}F(x)$ with $x=a-C$,
the edge density exponent is $d/(2k)-1$.
The nontrivial content is the inverse transfer $\phi\in\RV_k\Rightarrow
\Lambda\in\RV_{1/k}$ via de Bruijn conjugacy \eqref{eq:debruijn} and the
monotone density theorem; the exponent matching is then an unavoidable
consequence: $d/(2k)-1=(d-2)/2$ forces $k=1$.
Slowly varying factors $L,L^\ast$ affect the prefactor but not the exponent.
\qed
\end{proof}

\begin{corollary}[Nonlinear distortion of the edge-variable exponent]\label{cor:nonlinear}
Under the assumptions of Theorem~\ref{thm:unique}, if $k\neq1$ then the edge-variable density exponent equals $d/(2k)-1\neq (d-2)/2$. For instance, for $k=2$ one has
\begin{equation}
\rho_{\mu_g}(C)\in \mathrm{RV}_{\,d/4-1},
\end{equation}
which mismatches the geometric edge-variable exponent for any $d\ge 1$.
\end{corollary}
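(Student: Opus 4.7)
The plan is to derive the corollary as a direct algebraic consequence of Theorem~\ref{thm:unique}, with no additional analytic input. Theorem~\ref{thm:unique} already establishes that $\rho_{\mu_g}(C)\in \RV_{d/(2k)-1}$ for every polynomial-type encoding $g(\lambda)\sim a-b\lambda^{k}L(\lambda)$ with $k>0$, so the only thing left to verify is that this index disagrees with the geometric edge-variable density exponent $(d-2)/2$ whenever $k\neq 1$, and to specialize $k=2$ for the explicit example.

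For the general statement, I would set up the equation $d/(2k)-1=(d-2)/2$ and solve. Clearing denominators yields $d-2k=kd-2k$, i.e.\ $d=kd$, which (using $d\geq 1$) forces $k=1$. Hence any $k\neq 1$ produces $d/(2k)-1\neq (d-2)/2$, which is the first assertion. For the $k=2$ specialization, substituting into the index formula from Theorem~\ref{thm:unique} gives $\rho_{\mu_g}(C)\in \RV_{d/4-1}$; since $(d/4-1)-((d-2)/2)=-d/4$ is strictly negative for every integer $d\geq 1$, the mismatch holds without exception.

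There is no substantial obstacle here, because the analytic content---de Bruijn conjugacy, the exact composition identity $N_{\mu_g}(a-x)=N_\Delta(\Lambda(x))$, and the monotone density theorem---has already been absorbed into the proof of Theorem~\ref{thm:unique}. The only minor care needed is to respect the standing hypotheses $d\geq 1$, $k>0$, and $b>0$, so that $k=1$ is the unique algebraic solution of $d/(2k)-1=(d-2)/2$ in the relevant range, and to note (as in the proof of Theorem~\ref{thm:unique}) that the slowly varying factors $L$ and $L^\ast$ affect only the prefactor and not the $\RV$ index, so the exponent mismatch is intrinsic and not an artifact of a particular normalization.
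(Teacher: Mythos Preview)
Your proposal is correct and follows exactly the route the paper intends: the corollary is left unproved because it is an immediate consequence of Theorem~\ref{thm:unique}, and your algebraic verification $d/(2k)-1=(d-2)/2\iff d=kd\iff k=1$ (and the $k=2$ specialization) is precisely the computation already embedded in the last line of that theorem's proof.
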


\begin{corollary}[Affine optimality for the edge-variable exponent]\label{cor:affine-optimality}
Among encodings $g(\lambda)=a-b\lambda^{k}L(\lambda)$ with $b>0$, $k>0$, $L\in\RV_0$,
the only ones reproducing both the correct edge-variable exponent $(d-2)/2$ and a finite,
nonzero edge-variable prefactor are those with $k=1$. If moreover $L(\lambda)\to1$, the
prefactor matches $\tfrac{d}{2}\gamma_d\,b^{-d/2}$; any $L\not\to1$ induces
a slowly varying multiplicative distortion.
\end{corollary}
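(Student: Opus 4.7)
The corollary is a prefactor-level refinement of Theorem~\ref{thm:unique}, so my plan is to organize the argument into two clean steps: eliminate $k \neq 1$ by the exponent formula already proven, and then for $k=1$ track the constant through the de Bruijn inversion and the monotone density theorem, reading off the role of $L$.

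For the first step I would invoke Theorem~\ref{thm:unique} directly: it established that $\rho_{\mu_g} \in \RV_{d/(2k)-1}$, and matching the geometric exponent $(d-2)/2$ forces $d/(2k)-1=(d-2)/2$, i.e.\ $k=1$. When $k\neq 1$, Corollary~\ref{cor:nonlinear} records that the exponent already mismatches for every $L\in\RV_0$, so no slowly varying correction can repair the leading order. Hence only $k=1$ can reproduce both the correct edge-variable exponent and a finite, nonzero prefactor.

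For $k=1$, set $\phi(\lambda):=a-g(\lambda)\sim b\lambda L(\lambda)\in\RV_1$ and recall from the proof of Theorem~\ref{thm:unique} the de Bruijn inverse $\Lambda(x)\sim (x/b)\,L^{\ast}(x)$, where the slowly varying $L^{\ast}$ is characterized by
\[
L^{\ast}(x)\,L\!\bigl((x/b)\,L^{\ast}(x)\bigr)\ \longrightarrow\ 1.
\]
The exact composition identity gives $N_{\mu_g}(a-x)=N_\Delta(\Lambda(x))$, and Weyl's law yields
\[
N_{\mu_g}(a-x)\sim\gamma_d\,\Lambda(x)^{d/2}\sim \gamma_d\,b^{-d/2}\,x^{d/2}\,L^{\ast}(x)^{d/2}.
\]
Applying the monotone density theorem \cite[Thm.~1.7.2]{BGT87} to the ultimately monotone $F(x):=N_{\mu_g}(a-x)\in\RV_{d/2}$ then produces
\[
\rho_{\mu_g}(C)\sim \tfrac{d}{2}\,\gamma_d\,b^{-d/2}\,x^{(d-2)/2}\,L^{\ast}(x)^{d/2},\qquad x=a-C,
\]
confirming the exponent $(d-2)/2$ and isolating the prefactor as $\tfrac{d}{2}\gamma_d b^{-d/2}$ dressed by $L^{\ast}(x)^{d/2}$.

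The dichotomy on $L$ now falls out of the de Bruijn fixed-point relation. If $L(\lambda)\to 1$, substituting into the characterization of $L^{\ast}$ forces $L^{\ast}(x)\to 1$, the slowly varying dressing disappears at leading order, and the prefactor collapses to the clean constant $\tfrac{d}{2}\gamma_d b^{-d/2}$. If instead $L$ does not converge to a constant (for example $L(\lambda)=\log\lambda$), the same relation obliges $L^{\ast}$ to be nontrivial and the edge-variable density retains a genuine slowly varying multiplicative distortion while preserving the exponent $(d-2)/2$. The main technical nuance is the implication $L\to 1\Rightarrow L^{\ast}\to 1$: one must rule out oscillatory escape of $L^{\ast}$ through the composition $L((x/b)L^{\ast}(x))$, which I would handle with a short Potter-bound argument \cite[Thm.~1.5.6]{BGT87} on the composed slowly varying factor. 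Everything else reduces to bookkeeping already carried out in Theorem~\ref{thm:unique} and the monotone density theorem.
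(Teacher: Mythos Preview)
Your proposal is correct and follows the same route as the paper: the paper's proof is the one-liner ``Immediate from Theorem~\ref{thm:unique} and the monotone density theorem,'' and you have simply unpacked that into explicit steps, tracking the de~Bruijn conjugate $L^\ast$ through Weyl's law and Karamata differentiation. Your prefactor computation in fact anticipates what the paper writes out separately in Proposition~\ref{prop:slow-var-exponents} and Corollary~\ref{cor:prefactor-Lstar}, so the level of detail is appropriate; the only minor over-engineering is the Potter-bound argument for $L\to1\Rightarrow L^\ast\to1$, which is immediate since $\phi(\lambda)\sim b\lambda$ forces $\Lambda(x)\sim x/b$ directly.
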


\begin{proof}
Immediate from Theorem~\ref{thm:unique} and the monotone density theorem \cite[§1.7]{BGT87}.
\qed
\end{proof}

\subsection{Recovering $k$ from edge-variable data}\label{subsec:k-equals-one}

By Theorem~\ref{thm:unique}, the exponent formula $N_{\mu_g}(a-x)\in\RV_{d/(2k)}$
immediately yields a practical estimator for $k$.

\begin{corollary}[Estimating $k$ from edge-variable data]\label{cor:estimate-k}
Suppose near the upper edge $C\uparrow a$ the counting behaves as
\begin{equation}
N_{\mu_g}(C)\sim A\,(a-C)^{\alpha}\quad\text{with }\alpha>0,
\end{equation}
or, equivalently, the edge-variable density behaves as
\begin{equation}
\rho_{\mu_g}(C)\sim B\,(a-C)^{\beta}\quad\text{with }\beta>-1.
\end{equation}
Then $\alpha=d/(2k)$ and $\beta=d/(2k)-1$, so
\begin{equation}
k=\frac{d}{2\alpha}=\frac{d}{2(\beta+1)}.
\end{equation}
Thus the edge-variable log--log slope (of either $N$ or $\rho$) recovers $k$; matching the geometric value requires $k=1$.
\end{corollary}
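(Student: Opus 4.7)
The plan is to deduce Corollary~\ref{cor:estimate-k} directly from Proposition~\ref{prop:edge-exponent-k} by matching the indices of regular variation: the hypothesis supplies a concrete power-law behavior, and the proposition supplies the abstract regular-variation class, so the two exponents must coincide.

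First, I would set $x:=a-C\downarrow0$ and rewrite the assumption $N_{\mu_g}(C)\sim A(a-C)^\alpha$ as $N_{\mu_g}(a-x)\sim A\,x^\alpha$. This displays $N_{\mu_g}(a-\,\cdot\,)$ as an element of $\RV_\alpha$ with explicit leading constant $A$. By Proposition~\ref{prop:edge-exponent-k}, the same function lies in $\RV_{d/(2k)}$. Because the index of regular variation of a positive function is unique (a standard fact from \cite[§1.2]{BGT87}), I obtain $\alpha=d/(2k)$, and solving gives $k=d/(2\alpha)$.

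For the density form, the identical argument works with $\rho_{\mu_g}$: the assumption $\rho_{\mu_g}(C)\sim B(a-C)^\beta$ places $\rho_{\mu_g}(a-\,\cdot\,)\in\RV_\beta$, while the proposition places it in $\RV_{d/(2k)-1}$; uniqueness of the index yields $\beta=d/(2k)-1$, whence $k=d/(2(\beta+1))$. As a consistency check, I would note that the monotone density theorem \cite[Thm.~1.7.2]{BGT87} forces $\beta=\alpha-1$ and $B=A\alpha$ whenever both asymptotics hold simultaneously, so the two expressions for $k$ agree automatically. The geometric matching condition $k=1$ is then equivalent to $\alpha=d/2$ (equivalently $\beta=(d-2)/2$), recovering the edge-variable Weyl exponents of Theorem~\ref{thm:edge-recon} as the unique fixed point.

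There is essentially no obstacle here: the corollary is a one-line consequence of Proposition~\ref{prop:edge-exponent-k} and uniqueness of the regular-variation index. The only thing worth flagging is the hypothesis $\beta>-1$ (respectively $\alpha>0$), which ensures that the limiting power lies in the range where the monotone density theorem applies and where $\rho_{\mu_g}$ is genuinely regularly varying rather than integrable-at-zero; this matches the positivity of $d/(2k)$ that comes built into Theorem~\ref{thm:unique}. No smoothing is needed beyond what has already been established in Proposition~\ref{prop:smooth-N}, since the conclusion concerns only the exponent and the leading prefactor.
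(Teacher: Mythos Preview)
Your proposal is correct and matches the paper's treatment: the corollary is stated without proof there, as it is an immediate consequence of Proposition~\ref{prop:edge-exponent-k} together with uniqueness of the regular-variation index, exactly as you argue. Your added consistency remark via the monotone density theorem is a helpful extra check but not required.
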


\subsection{Slow variation and prefactors}\label{subsec:slow-var}

While $k=1$ is forced by the exponent constraint, the slowly varying factor
$L\in\mathrm{RV}_0$ in the affine encoding $g(\lambda)=a-b\lambda L(\lambda)$
can still affect the prefactor via the de Bruijn conjugate $L^\ast$.
This subsection quantifies that effect and shows the prefactor reduces to
the canonical form when $L\to1$.

\begin{proposition}[Slow variation does not change edge-variable exponents]\label{prop:slow-var-exponents}
Let $g(\lambda)\sim a-b\,\lambda^{k}L(\lambda)$ with $b>0$, $k>0$, and $L\in\RV_0$, and assume $g$ is ultimately strictly decreasing. Put $\phi(\lambda):=a-g(\lambda)\sim b\lambda^{k}L(\lambda)$ and let $\Lambda$ be an asymptotic inverse of $\phi$. Then, with $x:=a-C$,
\begin{equation}
N_{\mu_g}(a-x)=N_\Delta(\Lambda(x))\in\RV_{\,d/(2k)},
\qquad
\rho_{\mu_g}(a-x)\in\RV_{\,d/(2k)-1}.
\end{equation}
In particular, $L\in\RV_0$ can modify only the \emph{prefactor} by a slowly varying multiplicative factor; the exponents $d/(2k)$ and $d/(2k)-1$ are unaffected.
\end{proposition}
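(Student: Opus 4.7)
The plan is to reduce this statement to an application of the regular variation toolkit assembled in the preceding sections (Karamata, de Bruijn inversion, monotone density), already invoked in Theorem~\ref{thm:unique} and Proposition~\ref{prop:edge-exponent-k}. Since the present claim is essentially a stability refinement highlighting that only the \emph{prefactor} is affected by $L\in\RV_0$, nothing beyond those tools should be needed.

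First, I would observe that $\phi(\lambda):=a-g(\lambda)\sim b\lambda^{k}L(\lambda)$ is a product of a power with a slowly varying factor, hence $\phi\in\RV_k$, and by hypothesis is eventually strictly increasing. By de Bruijn asymptotic inversion for regularly varying functions (\cite[Thm.~1.5.12]{BGT87}), there exists an asymptotic inverse $\Lambda\in\RV_{1/k}$; concretely $\Lambda(x)\sim (x/b)^{1/k}L^\ast(x)$ for some $L^\ast\in\RV_0$ determined by $L$. The key structural point is that $L$ enters \emph{only} through the slowly varying conjugate $L^\ast$, and in particular cannot alter the exponent $1/k$.

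Next, strict monotonicity of $g$ (cf.~Lemma~\ref{lem:mono-encoding} in the affine case) gives the exact composition identity $N_{\mu_g}(a-x)=N_\Delta(\Lambda(x))$ with multiplicities. Substituting Weyl's law $N_\Delta(\Lambda)\sim\gamma_d\Lambda^{d/2}$ and using $\Lambda\in\RV_{1/k}$, I conclude
\begin{equation}
N_{\mu_g}(a-x)\sim\gamma_d\,\Lambda(x)^{d/2}\sim\gamma_d\,b^{-d/(2k)}\,x^{d/(2k)}\bigl(L^\ast(x)\bigr)^{d/2}\in\RV_{d/(2k)},
\end{equation}
since the product of the power $x^{d/(2k)}$ with the slowly varying factor $(L^\ast)^{d/2}\in\RV_0$ lies in $\RV_{d/(2k)}$. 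This exhibits the exponent $d/(2k)$ as intrinsic and shows explicitly that $L$ contributes only through $(L^\ast)^{d/2}$, i.e.\ a slowly varying prefactor.

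For the density statement, I would let $F(x):=N_{\mu_g}(a-x)\in\RV_{d/(2k)}$, which is nondecreasing in $x$ (and hence ultimately monotone) by construction. Applying the monotone density theorem \cite[Thm.~1.7.2]{BGT87} yields $F'(x)\in\RV_{d/(2k)-1}$ with the expected prefactor $\tfrac{d}{2k}$ times the coefficient of $F$. Since $\rho_{\mu_g}(a-x)=F'(x)$ in the Stieltjes/distributional sense (after a standard smoothing as in Proposition~\ref{prop:smooth-N} if one wants pointwise derivatives), the claimed regular-variation class $\RV_{d/(2k)-1}$ follows. The only delicate point, and the step worth executing carefully, is verifying that the same $L^\ast$ governs the density prefactor without any spurious shift in exponent; this is immediate once one observes that differentiation in $\RV_\alpha$ with $\alpha>0$ preserves slow-variation factors (again by \cite[Thm.~1.7.2]{BGT87}). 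I do not anticipate any essential obstacle here, since both the inversion step and the density step are standard one-step applications once regular variation of $\phi$ is in hand.
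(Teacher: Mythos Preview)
Your proposal is correct and follows essentially the same route as the paper: de Bruijn inversion to get $\Lambda(x)\sim(x/b)^{1/k}L^\ast(x)$ with $L^\ast\in\RV_0$, then Weyl's law to obtain $N_{\mu_g}(a-x)\sim\gamma_d\,b^{-d/(2k)}x^{d/(2k)}(L^\ast(x))^{d/2}\in\RV_{d/(2k)}$, and finally the monotone density theorem for $\rho_{\mu_g}$. The paper's proof is slightly more terse but structurally identical.
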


\begin{proof}
By de Bruijn inversion \cite[Thm.~1.5.12]{BGT87}, there exists $L^\ast\in\RV_0$ such that
\begin{equation}
\Lambda(x)\sim \Big(\frac{x}{b}\Big)^{1/k}L^\ast(x)
\quad\text{with}\quad
\big(L^\ast(x)\big)^{k}\,
L\!\Big(\Big(\tfrac{x}{b}\Big)^{1/k} L^\ast(x)\Big)\to 1.
\end{equation}
Then Weyl's law gives
\begin{equation}
N_{\mu_g}(a-x)\sim \gamma_d\,\Lambda(x)^{d/2}
\sim \gamma_d\,b^{-d/(2k)}\,x^{d/(2k)}\,(L^\ast(x))^{d/2}\in\RV_{\,d/(2k)},
\end{equation}
and the monotone density theorem yields the claim for $\rho_{\mu_g}$.
\qed
\end{proof}

\begin{corollary}[Explicit prefactor with de Bruijn conjugate]\label{cor:prefactor-Lstar}
Under the assumptions of Proposition~\ref{prop:slow-var-exponents},
\begin{align}
N_{\mu_g}(a-x)&\sim \gamma_d\,b^{-d/(2k)}\,x^{d/(2k)}\,(L^\ast(x))^{d/2},\\
\rho_{\mu_g}(a-x)&\sim \tfrac{d}{2k}\,\gamma_d\,b^{-d/(2k)}\,x^{\frac{d}{2k}-1}\,(L^\ast(x))^{d/2},
\end{align}
where $L^\ast\in\RV_0$ is the de Bruijn conjugate of $L$ associated to $\phi(\lambda)=b\lambda^{k}L(\lambda)$.
\end{corollary}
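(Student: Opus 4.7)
The plan is to combine the de Bruijn inversion formula recorded in Proposition~\ref{prop:slow-var-exponents} with the exact composition identity and Weyl's law, and then to propagate the asymptotics to the derivative through the monotone density theorem. All of this is essentially bookkeeping once the regular-variation machinery is in place.

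First I would start from the exact identity $N_{\mu_g}(a-x)=N_\Delta(\Lambda(x))$, valid by the strict monotonicity of $g$ near infinity and the pushforward definition of $\mu_g$. By de Bruijn inversion \cite[Thm.~1.5.12]{BGT87}, the asymptotic inverse $\Lambda$ of $\phi(\lambda)=b\lambda^{k}L(\lambda)$ satisfies
\begin{equation}
\Lambda(x)\sim \Big(\tfrac{x}{b}\Big)^{1/k}L^\ast(x)\qquad(x\downarrow 0),
\end{equation}
where $L^\ast\in\RV_0$ is the de Bruijn conjugate. Substituting into Weyl's law $N_\Delta(\Lambda)\sim \gamma_d\Lambda^{d/2}$ gives
\begin{equation}
N_{\mu_g}(a-x)\sim \gamma_d\,\Lambda(x)^{d/2}\sim \gamma_d\,b^{-d/(2k)}\,x^{d/(2k)}\,(L^\ast(x))^{d/2},
\end{equation}
which is the first claim. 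Note that raising to the power $d/2$ preserves the slowly varying class because $(L^\ast)^{d/2}\in \RV_0$ whenever $L^\ast\in\RV_0$ (this is a direct consequence of the definition).

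Second, for the density, I would set $F(x):=N_{\mu_g}(a-x)$ so that $\rho_{\mu_g}(a-x)=F'(x)$ in the Stieltjes/distributional sense (with the standard smoothing of Proposition~\ref{prop:smooth-N} applied if necessary to obtain eventual monotonicity). Since $F\in\RV_{d/(2k)}$ with positive exponent $d/(2k)>0$, and since $N_\Delta$, hence $F$, is nondecreasing, the monotone density theorem \cite[Thm.~1.7.2]{BGT87} applies and gives
\begin{equation}
F'(x)\sim \tfrac{d}{2k}\cdot\frac{F(x)}{x}\sim \tfrac{d}{2k}\,\gamma_d\,b^{-d/(2k)}\,x^{\frac{d}{2k}-1}\,(L^\ast(x))^{d/2},
\end{equation}
which is exactly the second claim.

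The main obstacle, such as it is, is not a deep one: it is to verify that the slowly varying factors can be carried through the inversion and the differentiation steps without being smeared into an indeterminate $(1+o(1))$ error that would contaminate the prefactor. This is handled uniformly by the Uniform Convergence Theorem \cite[Thm.~1.2.1]{BGT87} (to pass from $\sim$ to uniform convergence on compact $u$-sets in $L^\ast(xu)/L^\ast(x)\to 1$), together with the observation that the monotone density theorem yields the exponent and the natural prefactor $d/(2k)$ simultaneously. Once these two invocations are in place, the displayed formulas follow by direct substitution.
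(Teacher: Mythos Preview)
Your proposal is correct and follows essentially the same route as the paper: the asymptotics for $N_{\mu_g}(a-x)$ are already displayed verbatim in the proof of Proposition~\ref{prop:slow-var-exponents} via the de Bruijn inversion $\Lambda(x)\sim(x/b)^{1/k}L^\ast(x)$ composed with Weyl's law, and the density statement is obtained exactly as you do, by invoking the monotone density theorem. The corollary in the paper carries no separate proof because its content is extracted directly from that proposition's argument, which is precisely what you have reproduced.
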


\begin{remark}[Specializations for $k=1$]\label{rem:slow-k1}
When $k=1$ (the asymptotically affine case required by Theorem~\ref{thm:unique}):
\begin{enumerate}[leftmargin=2em,itemsep=2pt]
\item If $L(\lambda)\to \ell_\infty\in(0,\infty)$, then $L^\ast(x)\to \ell_\infty^{-1}$ and
\begin{align}
N_{\mu_g}(a-x)&\sim \gamma_d\, (b\ell_\infty)^{-d/2}\,x^{d/2},\\
\rho_{\mu_g}(a-x)&\sim \tfrac{d}{2}\,\gamma_d\, (b\ell_\infty)^{-d/2}\,x^{\frac{d-2}{2}}.
\end{align}
Thus a constant slow factor merely rescales the prefactor by $(\ell_\infty)^{-d/2}$.
\item If $L(\lambda)=(\log\lambda)^{\alpha}(\log\log\lambda)^{\beta}$ for large $\lambda$,
then $L^\ast(x)$ is again slowly varying and behaves (in the corresponding asymptotic regime)
like $(\log x)^{-\alpha}(\log\log x)^{-\beta}$; hence $N_{\mu_g}$ and $\rho_{\mu_g}$ pick up
a multiplicative modulation $(L^\ast(x))^{d/2}$ while their power exponents stay unchanged.
\end{enumerate}
\end{remark}

\begin{corollary}[Affine normalization and matching of prefactors]\label{cor:affine-normalization}
In the affine case $g(\lambda)=a-\epsilon\,\lambda\,L(\lambda)$ with $L\to 1$, the de Bruijn factor satisfies $L^\ast(x)\to 1$ and the edge-variable prefactors reduce to the canonical values
\begin{align}
N_{\mu_g}(a-x)&\sim \gamma_d\,\epsilon^{-d/2}\,x^{d/2},\\
\rho_{\mu_g}(a-x)&\sim \tfrac{d}{2}\,\gamma_d\,\epsilon^{-d/2}\,x^{\frac{d-2}{2}}.
\end{align}
More generally, any $L\in\RV_0$ only changes these prefactors by the slowly varying factor $(L^\ast(x))^{d/2}$.
\end{corollary}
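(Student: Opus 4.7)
The plan is to specialize Corollary~\ref{cor:prefactor-Lstar} to $k=1$ and $b=\epsilon$, and then to show that when $L(\lambda)\to 1$ the de Bruijn conjugate $L^\ast$ also tends to $1$, so that all slowly varying modulations in the resulting displays collapse to unity. The ``more generally'' clause is then literally the $k=1$, $b=\epsilon$ instance of Corollary~\ref{cor:prefactor-Lstar}, with the factor $(L^\ast(x))^{d/2}$ left in place.

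First I would record the specialization. Write $\phi(\lambda)=\epsilon\lambda L(\lambda)\in\RV_1$; by de Bruijn inversion \cite[Thm.~1.5.12]{BGT87} there is an asymptotic inverse $\Lambda\in\RV_1$ with $\Lambda(x)\sim(x/\epsilon)L^\ast(x)$, where $L^\ast\in\RV_0$ is characterized by the implicit relation
\begin{equation}
L^\ast(x)\,L\!\bigl((x/\epsilon)\,L^\ast(x)\bigr)\longrightarrow 1\qquad(x\to\infty).
\end{equation}
Substituting into the formulas of Corollary~\ref{cor:prefactor-Lstar} yields
\begin{equation}
N_{\mu_g}(a-x)\sim\gamma_d\,\epsilon^{-d/2}\,x^{d/2}(L^\ast(x))^{d/2},\qquad
\rho_{\mu_g}(a-x)\sim \tfrac{d}{2}\gamma_d\,\epsilon^{-d/2}\,x^{(d-2)/2}(L^\ast(x))^{d/2},
\end{equation}
which establishes the second (general) assertion of the corollary.

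The main technical step is verifying that $L^\ast(x)\to 1$ whenever $L(\lambda)\to 1$. I would argue by a bootstrap using Potter bounds \cite[Thm.~1.5.6]{BGT87}: since $L^\ast\in\RV_0$, for any $\eta>0$ one has $x^{-\eta}\le L^\ast(x)\le x^{\eta}$ eventually, so the inner argument $(x/\epsilon)L^\ast(x)\to\infty$ as $x\to\infty$. Hence $L\bigl((x/\epsilon)L^\ast(x)\bigr)\to 1$ by hypothesis, and the implicit relation then forces $L^\ast(x)\to 1$. Inserting this into the displays above collapses $(L^\ast(x))^{d/2}\to 1$ and recovers the canonical prefactor $\gamma_d\epsilon^{-d/2}$, completing the first (affine) assertion.

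I expect the main obstacle to be making the bootstrap fully rigorous: one must rule out subsequential oscillation of the implicitly defined $L^\ast$ and propagate $L^\ast(x)\,L((x/\epsilon)L^\ast(x))\to 1$ into $L^\ast(x)\to 1$ along the full sequence. The uniform convergence theorem for slowly varying functions \cite[Thm.~1.2.1]{BGT87}, combined with the Potter bounds, should suffice, but some care is needed because $L^\ast$ is defined only up to asymptotic equivalence.
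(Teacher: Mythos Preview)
Your proposal is correct and follows the same route as the paper: specialize Corollary~\ref{cor:prefactor-Lstar} to $k=1$, $b=\epsilon$ (this is exactly Remark~\ref{rem:slow-k1}(1) with $\ell_\infty=1$), and observe that $L^\ast\to1$. Your worry about subsequential oscillation is unfounded: once Potter bounds give $(x/\epsilon)L^\ast(x)\to\infty$, the hypothesis $L\to1$ yields $L\bigl((x/\epsilon)L^\ast(x)\bigr)\to1$, and then the defining relation $L^\ast(x)\,L\bigl((x/\epsilon)L^\ast(x)\bigr)\to1$ forces $L^\ast(x)\to1$ along the full limit by elementary algebra---no appeal to the uniform convergence theorem is needed.
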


\subsection{Uniqueness of the affine scaling from the bulk exponent}\label{subsec:uniqueness-orv}

Let $g(\lambda)=a-\phi(\lambda)$ with $\phi(\lambda)>0$ ultimately increasing and $\phi(\lambda)\to\infty$. Write $x:=a-C\downarrow0$ and let $\mu_g$ be the pushforward of $\mu_\Delta$ by $g$.

The following is the main rigidity theorem of this section;
Theorem~\ref{thm:unique} above is its polynomial specialization.

\begin{theorem}[O-RV rigidity: main theorem]\label{thm:ORV-uniqueness}
Assume $\phi$ is \(O\)-regularly varying at $+\infty$ with finite Matuszewska indices $0<\alpha_\phi^-\le \alpha_\phi^+<\infty$ (Bingham--Goldie--Teugels~\cite[§2.1]{BGT87}), and is ultimately increasing. Let $\Lambda$ be an asymptotic inverse of $\phi$ near $0$. Then:
\begin{enumerate}[label=\textup{(\roman*)}, itemsep=2pt]
\item $\Lambda$ is \(O\)-regularly varying at $0+$ with indices $1/\alpha_\phi^+\le \mathrm{ind}(\Lambda)\le 1/\alpha_\phi^-$.
\item Consequently,
\begin{align}
N_{\mu_g}(a-x)&=N_\Delta(\Lambda(x))\in O\text{-}\RV_{\,\left[d/(2\alpha_\phi^+),\,d/(2\alpha_\phi^-)\right]},\\
\rho_{\mu_g}(a-x)&\in O\text{-}\RV_{\,\left[d/(2\alpha_\phi^+)-1,\,d/(2\alpha_\phi^-)-1\right]}.
\end{align}
\item If, in fact, a genuine power law holds at the edge,
\begin{equation}
N_{\mu_g}(a-x)\sim A\,x^{d/2}\quad\text{or}\quad \rho_{\mu_g}(a-x)\sim B\,x^{(d-2)/2},
\end{equation}
then necessarily $\alpha_\phi^-=\alpha_\phi^+=1$. Hence $\phi\in \RV_1$ and
\begin{equation}
\phi(\lambda)=b\,\lambda\,L(\lambda),\qquad b>0,\quad L\in\RV_0,
\end{equation}
so $g(\lambda)=a-b\lambda L(\lambda)$ is asymptotically affine (up to slow variation). In particular, the case $k=1$ of Theorem~\ref{thm:unique} is \emph{necessary and sufficient} at the level of one-sided bulk power exponents.
\end{enumerate}
\end{theorem}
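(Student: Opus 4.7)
The plan is to work entirely inside the O-regularly varying (ORV) toolkit of Bingham--Goldie--Teugels~\cite[Ch.~2]{BGT87}, treating $\phi(\lambda):=a-g(\lambda)$ as the primary object and shuttling between the edge regime and the high-energy regime via the exact dictionary identity $N_{\mu_g}(a-x)=N_\Delta(\Lambda(x))$ (cf.~Lemma~\ref{lem:dictionary}, whose monotonicity-based proof goes through verbatim for any ultimately strictly monotone $g$). For part (i) the sole input is the asymptotic inversion theorem for ORV functions (BGT~\cite[§2.4]{BGT87}): since $\phi$ is ultimately monotone increasing to $\infty$ with finite Matuszewska indices $0<\alpha_\phi^-\le\alpha_\phi^+<\infty$, Potter-type bounds invert cleanly to produce an asymptotic inverse $\Lambda$ at $0+$ whose Matuszewska indices are the reciprocals $1/\alpha_\phi^+$ and $1/\alpha_\phi^-$.

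For part (ii) I would combine (i) with Weyl's law $N_\Delta\in\RV_{d/2}$ and the multiplicative rule for Matuszewska indices under composition. Since an $\RV_{d/2}$ function has both indices equal to $d/2$, composing with an ORV function of indices $[1/\alpha_\phi^+,\,1/\alpha_\phi^-]$ scales those indices by $d/2$, giving
\begin{equation}
N_{\mu_g}(a-x)=N_\Delta(\Lambda(x))\in O\text{-}\RV_{\,[d/(2\alpha_\phi^+),\,d/(2\alpha_\phi^-)]}.
\end{equation}
For the bulk density I would invoke the ORV analog of the monotone density theorem: if $F$ is nondecreasing and ORV with indices $[\mu^-,\mu^+]$, $\mu^->0$, then the distributional derivative $F'$ is ORV with indices $[\mu^--1,\,\mu^+-1]$. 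This shifts the indices of $\rho_{\mu_g}$ to the range claimed.

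For part (iii) the cleanest route avoids Matuszewska pinching altogether: the hypothesis $N_{\mu_g}(a-x)\sim A\,x^{d/2}$ combined with the dictionary identity yields $N_\Delta(\Lambda(xt))/N_\Delta(\Lambda(x))\to t^{d/2}$ as $x\downarrow 0$ for each fixed $t>0$; since $N_\Delta\in\RV_{d/2}$, the Uniform Convergence Theorem (BGT~\cite[Thm.~1.2.1]{BGT87}) lets me divide through and conclude $\Lambda(xt)/\Lambda(x)\to t$, i.e.\ $\Lambda\in\RV_1$ at $0+$. Inversion for ordinary regularly varying functions (BGT~\cite[Thm.~1.5.12]{BGT87}) then transfers this to $\phi\in\RV_1$ at infinity, and the Karamata representation gives $\phi(\lambda)=b\,\lambda\,L(\lambda)$ with $b>0$ and $L\in\RV_0$, which is the desired asymptotic affinity; the pinching $\alpha_\phi^-=\alpha_\phi^+=1$ is then recovered a posteriori. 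The density version $\rho_{\mu_g}(a-x)\sim B\,x^{(d-2)/2}$ reduces to the counting case by Karamata-type integration. The main obstacle is the ORV extension of the monotone density theorem used in (ii), since BGT~\cite[Thm.~1.7.2]{BGT87} is stated for genuine regular variation and propagating only one-sided Matuszewska bounds through differentiation requires Potter bounds applied to increments plus a careful monotonicity input; if the ORV extension proves awkward, it can be sidestepped by working with the counting exponents throughout and recovering the density indices via integration.
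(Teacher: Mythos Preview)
Your proposal is correct and, for parts (i) and (ii), tracks the paper's own argument closely: ORV inversion to get the reciprocal Matuszewska indices on $\Lambda$, then composition with the $\RV_{d/2}$ Weyl counting function to transfer the index interval, followed by an ORV monotone-density step for $\rho_{\mu_g}$. You are also right to flag that last step as the most delicate; the paper's proof is equally terse there and simply asserts the transfer.

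For part (iii) you take a genuinely different route. The paper argues by \emph{index pinching}: a sharp power law $N_{\mu_g}(a-x)\sim Ax^{d/2}$ means both Matuszewska indices of $N_{\mu_g}(a-\cdot)$ equal $d/2$, which via the composition formula in (ii) forces the indices of $\Lambda$ to collapse to $1$, hence those of $\phi$ to $1$, whence $\phi\in\RV_1$ by Karamata's representation. Your argument instead bypasses (ii) entirely: from the dictionary identity and $N_\Delta\in\RV_{d/2}$ you extract $\bigl(\Lambda(xt)/\Lambda(x)\bigr)^{d/2}\to t^{d/2}$ directly, concluding $\Lambda\in\RV_1$ without ever needing the ORV index interval from (ii) to be \emph{tight} (i.e.\ attained rather than merely a containment). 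This is a real advantage, since the paper's pinching step implicitly relies on that tightness, which is true when the outer function is genuinely $\RV_{d/2}$ but is not stated or checked. Your reduction of the density hypothesis to the counting hypothesis via Karamata integration is also cleaner than the paper's parenthetical appeal to monotone density. Both routes land on Karamata's representation $\phi(\lambda)=b\lambda L(\lambda)$ in the same way.
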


\begin{proof}
(i)--(ii) follow from standard inversion properties of \(O\)-regularly varying functions and bounding the inverse via the upper/lower Matuszewska indices \cite[§§2.1--2.4]{BGT87}. Composing with Weyl's law $N_\Delta(\Lambda)\sim \gamma_d\,\Lambda^{d/2}$ transfers the index interval. (iii) A single sharp power exponent for $N_{\mu_g}$ (or for $\rho_{\mu_g}$ via the monotone density theorem) forces the upper and lower indices of $\Lambda$ to coincide, hence those of $\phi$ coincide and equal $1$, i.e. $\phi\in\RV_1$. Then $\phi(\lambda)=b\lambda L(\lambda)$ with $L\in\RV_0$ by Karamata's representation \cite[Thm.~1.3.1]{BGT87}, which yields the stated form of $g$.
\qed
\end{proof}

\begin{remark}[Rapid/subliminal distortions cannot reproduce the bulk power]
If $\phi$ is faster than any power (e.g.\ $\phi(\lambda)=e^\lambda$), then $\Lambda(x)\asymp \log(1/x)$ and
$N_{\mu_g}(a-x)\asymp (\log(1/x))^{d/2}$, $\rho_{\mu_g}(a-x)\asymp x^{-1}(\log(1/x))^{d/2-1}$, which is not a power law. If $\phi(\lambda)=\lambda^k L(\lambda)$ with $k\neq1$, Theorem~\ref{thm:unique} shows the exponent mismatch $d/(2k)-1\neq (d-2)/2$.
\end{remark}

\subsection{Asymptotic spectral classes and encoding-induced morphisms}\label{subsec:asymptotic-classes}

\begin{definition}[Asymptotic spectral equivalence class]
Let $\mu_1,\mu_2$ be positive spectral measures with ultimately monotone counting
functions $N_1,N_2$. For $\beta>0$ write
\begin{equation}
\mu_1\sim_{\mathrm{as},\beta}\mu_2
\quad\Longleftrightarrow\quad
N_1\in\RV_\beta\ \text{and}\ N_2\in\RV_\beta.
\end{equation}
The corresponding class is denoted $[\mu]_{\mathrm{as},\beta}$.
Whenever $N_\mu\in\RV_\beta$, define the asymptotic spectral dimension
\begin{equation}
d_{\mathrm{as}}(\mu):=2\beta.
\end{equation}
\end{definition}

\begin{proposition}[Encoding-induced dimension scaling]\label{prop:class-morphism}
Assume $N_\Delta(\Lambda)\sim \gamma_d\Lambda^{d/2}$ and let
$g(\lambda)=a-\phi(\lambda)$ be ultimately strictly decreasing with
$\phi\in\RV_k$, $k>0$. Then
\begin{equation}
N_{\mu_g}(a-x)\in\RV_{d/(2k)},
\end{equation}
so $g$ induces a map between asymptotic classes
\begin{equation}
T_g:\ [\mu_\Delta]_{\mathrm{as},\,d/2}\longrightarrow [\mu_g]_{\mathrm{as},\,d/(2k)},
\end{equation}
and therefore acts on asymptotic spectral dimension by
\begin{equation}
d_{\mathrm{as}}\longmapsto \frac{1}{k}\,d_{\mathrm{as}}.
\end{equation}
\end{proposition}

\begin{proof}
This is exactly Theorem~\ref{thm:unique}: if $\phi\in\RV_k$ then
$N_{\mu_g}(a-x)=N_\Delta(\Lambda(x))\in\RV_{d/(2k)}$, where $\Lambda$ is an
asymptotic inverse of $\phi$.
\qed
\end{proof}

\begin{corollary}[Dimension-preserving morphisms]\label{cor:dim-preserving-morphism}
Under the hypotheses of Theorem~\ref{thm:ORV-uniqueness}:
\begin{enumerate}[label=\textup{(\roman*)},itemsep=2pt]
\item $g(\lambda)=a-\phi(\lambda)$ preserves asymptotic spectral dimension
if and only if $N_{\mu_g}(a-x)\in\RV_{d/2}$ (equivalently
$\rho_{\mu_g}\in\RV_{(d-2)/2}$);
\item if a genuine edge power law holds, namely
\begin{equation}
N_{\mu_g}(a-x)\sim A\,x^{d/2}
\quad\text{or}\quad
\rho_{\mu_g}(a-x)\sim B\,x^{(d-2)/2},
\end{equation}
then necessarily $\phi\in\RV_1$;
\item conversely, $\phi\in\RV_1$ implies
$N_{\mu_g}(a-x)\in\RV_{d/2}$ and hence dimension preservation.
\end{enumerate}
If in addition $\phi(\lambda)=\epsilon\lambda L(\lambda)$ with $L(\lambda)\to1$,
then
\begin{equation}
N_{\mu_g}(a-x)\sim \gamma_d\,\epsilon^{-d/2}x^{d/2},
\end{equation}
and the normalization is asymptotically strict affine:
$g(\lambda)=a-\epsilon\lambda+o(\lambda)$.
\end{corollary}

\begin{proof}
The first statement is the definition of $d_{\mathrm{as}}=2\beta$.
The implication in \textup{(ii)} is Theorem~\ref{thm:ORV-uniqueness}(iii).
For \textup{(iii)}, if $\phi\in\RV_1$, then its asymptotic inverse satisfies
$\Lambda\in\RV_1$, and therefore
$N_{\mu_g}(a-x)=N_\Delta(\Lambda(x))\in\RV_{d/2}$ by Weyl's law and composition
of regularly varying functions.
The normalization statement follows from Corollary~\ref{cor:affine-normalization}.
\qed
\end{proof}

\begin{remark}[Hierarchy of preservation]
\begin{enumerate}[label=\textup{(\roman*)},itemsep=2pt]
\item preserving Weyl exponent (equivalently asymptotic dimension) is exactly $\phi\in\RV_1$;
\item preserving the normalized Weyl constant requires the first-order normalization
$\phi(\lambda)=\epsilon\lambda(1+o(1))$;
\item this is the strict affine regime at leading order.
\end{enumerate}
\end{remark}

\section{Stability under perturbations}\label{sec:stability}

The uniqueness results of Section~\ref{sec:uniqueness} concern exact encodings.
Here we show that the geometric bulk exponents are stable when the affine encoding
is perturbed by $\delta(\lambda)=o(\lambda)$: the counting function and bulk
density retain the same power-law exponents, with error rates controlled by the
slowly varying factor $\eta=\delta/\lambda\in\mathrm{RV}_0$ \cite{BGT87}.
This stability is relevant whenever the encoding is known only approximately,
for instance when $\epsilon$ is estimated from data or when the encoding
is subject to a systematic drift.

\subsection{Setup and inversion error}\label{subsec:setup-stab}

The starting point is an explicit inversion estimate: given the perturbed
encoding $C(\lambda)=\pi-\epsilon\lambda+\delta(\lambda)$, we bound the
deviation of the inverse $\Lambda(C)$ from the unperturbed value $y/\epsilon$.

\begin{lemma}[Explicit inversion error under tame perturbations]\label{lem:inverse-error}
Assume $C(\lambda)=\pi-\epsilon\lambda+\delta(\lambda)$ is eventually strictly decreasing, with
\begin{equation}
|\delta(\lambda)|\le \eta(\lambda)\,\lambda,\qquad \eta\in \mathrm{RV}_0,\ \eta(\lambda)\to0.
\end{equation}
Let $y:=\pi-C$ and denote by $\Lambda(C)$ the inverse of $\lambda\mapsto C(\lambda)$ for large $\lambda$ (equivalently, for large $y$). Then, as $C\to-\infty$ (i.e. $y\to+\infty$),
\begin{equation}
\Lambda(C)=\frac{y}{\epsilon}\Big(1+O\big(\eta(y/\epsilon)\big)\Big).
\end{equation}
Consequently,
\begin{equation}
\NC(C)=\NDelta(\Lambda(C))=\gamma_d\,\epsilon^{-d/2}\,y^{d/2}\Big(1+O\big(\eta(y/\epsilon)\big)\Big),
\end{equation}
and the same relative error carries to $\rho_{\mathrm{bulk}}(C)$ after (Stieltjes) differentiation.
\end{lemma}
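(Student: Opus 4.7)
The plan is to solve the perturbed implicit equation for $\lambda$ in terms of $y := \pi - C$ with a quantified relative error, then feed the resulting expression for $\Lambda(C)$ into Weyl's law via the composition identity already used in the affine case, and finally transfer the error to the density by a monotone-density argument.

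First I would carry out the inversion. Rewriting the hypothesis as $y = \epsilon\lambda - \delta(\lambda)$ and using $|\delta(\lambda)| \le \eta(\lambda)\lambda$, I get the two-sided bound
\[
\epsilon\lambda\bigl(1-\eta(\lambda)\bigr)\ \le\ y\ \le\ \epsilon\lambda\bigl(1+\eta(\lambda)\bigr),
\]
valid for $\lambda$ large enough that $\eta(\lambda)<1/2$, so to leading order $\lambda = (y/\epsilon)\bigl(1+O(\eta(\lambda))\bigr)$. This produces a remainder of the form $O(\eta(\lambda))$ rather than the claimed $O(\eta(y/\epsilon))$, and this interchange is the step I expect to be the main obstacle. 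The replacement is justified because $\eta\in\mathrm{RV}_0$ and $\lambda \sim y/\epsilon$, so by the Uniform Convergence Theorem for slowly varying functions \cite[Thm.~1.2.1]{BGT87} together with Potter bounds \cite[Thm.~1.5.6]{BGT87} one has $\eta(\lambda)/\eta(y/\epsilon)\to 1$, hence the two are interchangeable inside the $O$-symbol; I would want to double-check uniformity along the whole range of $\lambda$ implicitly solving the perturbed equation, not just along a single sequence.

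Next I would transfer to the counting function. Eventual strict monotonicity of $C(\lambda)$ ensures that the Lemma~\ref{lem:exact} argument applies verbatim on the tail, giving the exact identity $\NC(C) = \NDelta(\Lambda(C))$. Substituting Weyl's law $\NDelta(\Lambda) \sim \gamma_d\Lambda^{d/2}$ with the inversion bound and using $(1+O(\eta))^{d/2} = 1 + O(\eta)$ (since $\eta\to 0$) yields the stated counting asymptotic
\[
\NC(C) = \gamma_d\,\epsilon^{-d/2}\,y^{d/2}\bigl(1+O(\eta(y/\epsilon))\bigr).
\]

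Finally, to carry the same relative error to $\rho_{\mathrm{bulk}}$, I would combine the smoothing machinery of Proposition~\ref{prop:smooth-N} with the monotone density theorem \cite[Thm.~1.7.2]{BGT87}. The smoothed counting function differs from $\NC$ by a negligible amount on scales $h = o(y)$, and its leading part $\gamma_d\epsilon^{-d/2}y^{d/2}$ together with its slowly varying perturbation $(1+O(\eta(y/\epsilon)))$ both lie in appropriate Karamata classes; differentiating preserves the relative $O(\eta(y/\epsilon))$ error while reducing the exponent by one. The only care here is that the remainder itself is sufficiently regular for the monotone density theorem to apply to it and not merely to the leading term, which follows from the slow variation of $\eta$ exactly as in the argument for Proposition~\ref{prop:smooth-N}.
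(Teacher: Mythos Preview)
Your proposal is correct and follows essentially the same approach as the paper: two-sided inversion bounds from $y=\epsilon\lambda-\delta(\lambda)$, replacement of $\eta(\lambda)$ by $\eta(y/\epsilon)$ via the Uniform Convergence Theorem for slowly varying functions, then composition with Weyl's law and finally the monotone density theorem for $\rho_{\mathrm{bulk}}$. The only cosmetic discrepancy is that your two-sided bound should read $\lambda(\epsilon-\eta(\lambda))\le y\le\lambda(\epsilon+\eta(\lambda))$ rather than $\epsilon\lambda(1\pm\eta(\lambda))$, but since $\epsilon$ is a fixed positive constant this is absorbed into the $O$-constant and does not affect the argument.
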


\begin{proof}
Set $y:=\pi-C$ and fix $\Lambda_0$ so large that for all $\lambda\ge \Lambda_0$ one has $\eta(\lambda)\le \epsilon/2$ and $C(\lambda)$ is strictly decreasing; consequently the inverse $\Lambda(C)$ is well defined for all $y\ge Y_0:=\pi-C(\Lambda_0)$.

\emph{Step 1: two--sided inversion bounds.}
From $y=\epsilon\lambda-\delta(\lambda)$ and $|\delta(\lambda)|\le \eta(\lambda)\lambda$ we get
\begin{equation}
\lambda\big(\epsilon-\eta(\lambda)\big)\ \le\ y\ \le\ \lambda\big(\epsilon+\eta(\lambda)\big),
\end{equation}
hence, for all large $y$ (so that $\eta(\lambda)<\epsilon/2$),
\begin{equation}\label{eq:two-sided-lambda}
\frac{y}{\epsilon+\eta(\lambda)}\ \le\ \lambda\ \le\ \frac{y}{\epsilon-\eta(\lambda)}.
\end{equation}
Dividing by $y/\epsilon$ and using $(1\pm t)^{-1}=1\mp t+O(t^2)$ yields
\begin{equation}
\frac{\lambda}{y/\epsilon}\ \in\ \left[\,\frac{\epsilon}{\epsilon+\eta(\lambda)}\,,\,\frac{\epsilon}{\epsilon-\eta(\lambda)}\,\right]
=\left[\,1-\frac{\eta(\lambda)}{\epsilon}+O\!\big(\eta(\lambda)^2\big),\ 1+\frac{\eta(\lambda)}{\epsilon}+O\!\big(\eta(\lambda)^2\big)\right].
\end{equation}
Therefore
\begin{equation}\label{eq:lambda-y-over-eps}
\lambda=\frac{y}{\epsilon}\Big(1+O\big(\eta(\lambda)\big)\Big)\qquad(y\to\infty),
\end{equation}
and in particular $\lambda/(y/\epsilon)\to 1$ as $y\to\infty$.

\emph{Step 2: replacing $\eta(\lambda)$ by $\eta(y/\epsilon)$.}
Since $\eta\in\RV_0$ (slowly varying), the Uniform Convergence Theorem (Bingham--Goldie--Teugels \cite[Thm.~1.2.1]{BGT87}) gives
\begin{equation}
\frac{\eta(\lambda)}{\eta(y/\epsilon)}\ \longrightarrow\ 1\qquad\text{whenever}\quad \frac{\lambda}{y/\epsilon}\ \longrightarrow\ 1.
\end{equation}
Using \eqref{eq:lambda-y-over-eps}, this applies here, so \(\eta(\lambda)=\eta(y/\epsilon)\,(1+o(1))\). Thus
\begin{equation}\label{eq:lambda-main}
\Lambda(C)=\lambda=\frac{y}{\epsilon}\Big(1+O\big(\eta(y/\epsilon)\big)\Big)\qquad(y\to\infty),
\end{equation}
which is the first claimed estimate.

\emph{Step 3: asymptotics for $N_{\mu_C}$.}
By monotonicity of $C(\cdot)$ we have the exact identity $N_{\mu_C}(C)=\#\{\lambda_\ell\le \Lambda(C)\}=N_\Delta(\Lambda(C))$. (Cf. Lemma~\ref{lem:mono-encoding}.) Weyl's law yields
\begin{equation}
N_\Delta(\Lambda)=\gamma_d\,\Lambda^{d/2}\,(1+o(1))\qquad(\Lambda\to\infty).
\end{equation}
Insert \eqref{eq:lambda-main}: write $\Lambda=(y/\epsilon)\,(1+\theta(y))$ with $\theta(y)=O(\eta(y/\epsilon))$. Then
\begin{equation}
\Lambda^{d/2}=\Big(\tfrac{y}{\epsilon}\Big)^{\!d/2}\,(1+\theta(y))^{d/2}
=\Big(\tfrac{y}{\epsilon}\Big)^{\!d/2}\Big(1+O\big(\eta(y/\epsilon)\big)\Big),
\end{equation}
and hence
\begin{equation}\label{eq:NC-asymp}
N_{\mu_C}(C)=\gamma_d\,\epsilon^{-d/2}\,y^{d/2}\Big(1+O\big(\eta(y/\epsilon)\big)\Big)\qquad(y\to\infty).
\end{equation}

\emph{Step 4: bulk density.}
Define $F(y):=N_{\mu_C}(\pi-y)$, so $F$ is nondecreasing and $F(y)\sim A\,y^{\alpha}L(y)$ with
\begin{equation}
A:=\gamma_d\,\epsilon^{-d/2},\quad \alpha:=\tfrac d2>0,\quad L(y):=1+O\big(\eta(y/\epsilon)\big).
\end{equation}
Because $\eta\in\RV_0$, the factor $L$ is slowly varying (and $L(y)\to 1$). By the monotone density (Karamata differentiation) theorem for regularly varying functions \cite[§§1.4--1.7, Thm.~1.7.2]{BGT87}, one has
\begin{equation}
F'(y)\ \sim\ \alpha\,A\,y^{\alpha-1}\,L(y)
=\tfrac{d}{2}\,\gamma_d\,\epsilon^{-d/2}\,y^{\frac{d-2}{2}}\Big(1+O\big(\eta(y/\epsilon)\big)\Big),
\end{equation}
where the derivative is taken in the (Stieltjes/distributional) sense; equivalently, the same asymptotic holds for any standard smooth regularization of the density. Since $\rho_{\mathrm{bulk}}(C)=-\frac{d}{dC}N_{\mu_C}(C)=\frac{d}{dy}F(y)$ with $y=\pi-C$, this gives the stated estimate for $\rho_{\mathrm{bulk}}(C)$.

Combining the four steps completes the proof.
\qed
\end{proof}

\subsection{Preservation of edge-variable exponents}\label{subsec:preserve}

Using the inversion bound from Lemma~\ref{lem:inverse-error}, we now state the
main stability result: all three quantities $\Lambda(C)$, $N_{\mu_C}(C)$, and
$\rho_{\mathrm{bulk}}(C)$ have the same asymptotics as in the unperturbed case,
up to a multiplicative factor $1+O(\eta(y/\epsilon))$.

\begin{proposition}[Stability under small perturbations: bulk formulation]\label{prop:stability}
Let
\begin{equation}
C(\lambda)=\pi-\epsilon\lambda+\delta(\lambda),\qquad \epsilon>0,
\end{equation}
where $\delta(\lambda)=\lambda\,\eta(\lambda)$ with $\eta\in \mathrm{RV}_0$ and $\eta(\lambda)\to 0$ as $\lambda\to\infty$.
Assume moreover that $\delta$ is $C^1$ eventually, $\delta'\in \mathrm{RV}_0$, and $\eta$ is ultimately monotone (equivalently, of bounded variation on a tail).
Set $y:=\pi-C$ and let $\Lambda(C)$ be the inverse of $\lambda\mapsto C(\lambda)$.
Then, as $C\to-\infty$ (equivalently $y\to+\infty$),
\begin{align}
\Lambda(C) &= \frac{y}{\epsilon}\Big(1 + O\!\big(\eta(y/\epsilon)\big)\Big),\\
N_{\mu_C}(C) &= \gamma_d\,\epsilon^{-d/2}\,y^{d/2}\Big(1 + O\!\big(\eta(y/\epsilon)\big)\Big),\\
\rho_{\mathrm{bulk}}(C) &= \tfrac{d}{2}\,\gamma_d\,\epsilon^{-d/2}\,y^{\frac{d-2}{2}}\Big(1 + O\!\big(\eta(y/\epsilon)\big)\Big).
\end{align}
hence the edge-variable (bulk) exponents are preserved and the prefactors are perturbed by a multiplicative $(1+O(\eta(y/\epsilon)))$ factor (in particular, by $(1+o(1))$). The $O(\cdot)$ terms are uniform for $\epsilon\in[\epsilon_0,\epsilon_1]\subset(0,\infty)$.
\end{proposition}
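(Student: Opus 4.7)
This proposition is essentially a refined restatement of Lemma~\ref{lem:inverse-error} under slightly stronger regularity ($\delta\in C^1$ with $\delta'\in\RV_0$, $\eta$ ultimately monotone), supplemented with a uniformity claim for $\epsilon\in[\epsilon_0,\epsilon_1]\subset(0,\infty)$. The plan is therefore to (i)~invoke Lemma~\ref{lem:inverse-error} for the pointwise inversion and counting estimates, (ii)~pass the density statement through the monotone density theorem using the added regularity, and (iii)~upgrade the $O(\eta(y/\epsilon))$ control to an $\epsilon$-uniform estimate via Potter bounds.

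First I would verify that the eventual strict monotonicity of $C(\cdot)$ required by Lemma~\ref{lem:inverse-error} holds \emph{uniformly} in $\epsilon\in[\epsilon_0,\epsilon_1]$. Since $\delta(\lambda)=\lambda\eta(\lambda)$ with $\eta\in\RV_0$, $\eta\to 0$, and $\delta'\in\RV_0$, the slowly varying function $\delta'$ cannot tend to a positive limit (otherwise $\delta(\lambda)\sim c\lambda$, contradicting $\delta(\lambda)=o(\lambda)$); combined with the monotone density theorem applied to $\delta\in\RV_1$, this yields $\delta'(\lambda)\to 0$. Hence there exists $\Lambda_0$, depending only on $\delta$ and $\epsilon_0$, such that $C'(\lambda)=-\epsilon+\delta'(\lambda)\le -\epsilon_0/2<0$ for all $\lambda\ge\Lambda_0$ and all $\epsilon\in[\epsilon_0,\epsilon_1]$. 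This validates the hypothesis of Lemma~\ref{lem:inverse-error} and supplies, pointwise in $\epsilon$, the first two displays of the proposition.

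For the bulk density I would set $F(y):=\NC(\pi-y)$ and use the already-established counting asymptotic to write $F(y)\sim \gamma_d\epsilon^{-d/2}y^{d/2}L_\epsilon(y)$ with $L_\epsilon(y):=1+O(\eta(y/\epsilon))\in\RV_0$, so that $F\in\RV_{d/2}$ and is nondecreasing. Because $\eta$ is ultimately monotone and $\delta'\in\RV_0$, $F$ is ultimately smoothable on scales $h=o(y)$ as in Prop.~\ref{prop:smooth-N}; the monotone density theorem \cite[Thm.~1.7.2]{BGT87} then gives $F'(y)\sim \tfrac{d}{2}\gamma_d\epsilon^{-d/2}y^{(d-2)/2}L_\epsilon(y)$, which after writing $y=\pi-C$ yields the third display.

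The main obstacle, and the genuine new content beyond Lemma~\ref{lem:inverse-error}, is uniformity in $\epsilon\in[\epsilon_0,\epsilon_1]$. Here I would apply Potter's bounds \cite[Thm.~1.5.6]{BGT87}: for any $\kappa>0$ there is $y_1$ such that $\eta(y/\epsilon)\le 2\max(\epsilon_0^{-\kappa},\epsilon_1^{\kappa})\,\eta(y)$ for all $y\ge y_1$ and all $\epsilon\in[\epsilon_0,\epsilon_1]$, so $\eta(y/\epsilon)$ is controlled by $\eta(y)$ with a constant depending only on $\epsilon_0,\epsilon_1,\kappa$. Coupled with the Uniform Convergence Theorem for $\RV_0$ (which makes the replacement $\eta(\lambda)\mapsto\eta(y/\epsilon)$ in Step~2 of Lemma~\ref{lem:inverse-error} uniform over compact ratio sets, exactly the situation since $\lambda/(y/\epsilon)\to 1$) and with the uniformity of Weyl's law on $\Lambda=Y\ge y/\epsilon_1\to\infty$ afforded by Prop.~\ref{prop:uniform-eps}, each pointwise $O$-bound obtained above is promoted to an $\epsilon$-uniform one over $[\epsilon_0,\epsilon_1]$, completing the proposition.
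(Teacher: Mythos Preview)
Your proposal is correct and follows essentially the same route as the paper: invert $y=\epsilon\lambda-\delta(\lambda)$ to obtain $\Lambda(C)=\tfrac{y}{\epsilon}(1+O(\eta(y/\epsilon)))$ via the uniform convergence theorem for $\RV_0$, compose with Weyl's law, and then apply the monotone density theorem to extract $\rho_{\mathrm{bulk}}$. The paper in fact just reruns the computation of Lemma~\ref{lem:inverse-error} inline rather than citing it, so your black-box appeal to that lemma is equivalent and slightly cleaner; your explicit verification that $\delta'(\lambda)\to 0$ (hence $C'(\lambda)<0$ eventually, uniformly on $[\epsilon_0,\epsilon_1]$) and your Potter-bound argument for the $\epsilon$-uniformity are details the paper's own proof leaves implicit.
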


\begin{proof}
Write $y=\pi-C(\lambda)=\epsilon\lambda-\delta(\lambda)=\epsilon\lambda\,[1-\eta(\lambda)/\epsilon]$.
Thus
\begin{equation}
\lambda=\frac{y}{\epsilon}\,\Big[1-\eta(\lambda)/\epsilon\Big]^{-1}
=\frac{y}{\epsilon}\,\Big(1+\tfrac{1}{\epsilon}\eta(\lambda)+O(\eta(\lambda)^2)\Big).
\end{equation}
By the uniform convergence theorem for slowly varying functions \cite[§1.8]{BGT87}, replace $\eta(\lambda)$ by $\eta(y/\epsilon)$ when $\lambda\sim y/\epsilon$, yielding
\begin{equation}
\Lambda(C)=\frac{y}{\epsilon}\Big(1+O(\eta(y/\epsilon))\Big),\qquad y\to\infty.
\end{equation}
Now apply Weyl's law $N_\Delta(\Lambda)\sim \gamma_d\,\Lambda^{d/2}$ (valid as $\Lambda\to\infty$) and stability of regular variation under composition \cite[§1.7]{BGT87} to get
\begin{equation}
N_{\mu_C}(C)=N_\Delta(\Lambda(C))
=\gamma_d\,\epsilon^{-d/2}\,y^{d/2}\Big(1+O(\eta(y/\epsilon))\Big).
\end{equation}
Set $F(y):=N_{\mu_C}(\pi-y)$. Then $F\in \mathrm{RV}_{d/2}$ and is ultimately monotone; by Karamata's monotone density theorem \cite[Thm.~1.7.2]{BGT87},
\begin{equation}
F'(y)\sim \tfrac{d}{2}\,\gamma_d\,\epsilon^{-d/2}\,y^{\frac{d-2}{2}}\Big(1+O(\eta(y/\epsilon))\Big).
\end{equation}
Since $\rho_{\mathrm{bulk}}(C)=-\frac{d}{dC}N_{\mu_C}(C)=\frac{d}{dy}F(y)$ with $y=\pi-C$, the stated asymptotics for $\rho_{\mathrm{bulk}}$ follow.
\qed
\end{proof}

\begin{lemma}[A convenient growth condition on $\delta$]\label{lem:delta-growth}
Assume $\delta(\lambda)=o(\lambda)$, $\delta$ is ultimately monotone, and there exists
a slowly varying $\ell\in\RV_0$ with
\begin{equation}
|\delta'(\lambda)|\le \frac{\ell(\lambda)}{\lambda}\quad\text{for large }\lambda.
\end{equation}
Then the conclusions of Lemma~\ref{lem:inverse-error} and Proposition~\ref{prop:stability}
hold (in the bulk formulation above) with $\eta=\ell$.
\end{lemma}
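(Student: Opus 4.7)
The goal is to reduce Lemma~\ref{lem:delta-growth} to the already–proven Lemma~\ref{lem:inverse-error} and Proposition~\ref{prop:stability} by verifying their main quantitative input $|\delta(\lambda)|\le \eta(\lambda)\lambda$ with $\eta:=\ell\in\RV_0$. The plan is to integrate the derivative bound and invoke Karamata's integration theorem for slowly varying functions.

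Since $\delta$ is ultimately monotone, $\delta'$ has constant sign on a tail $[\lambda_1,\infty)$. Integrating the hypothesis $|\delta'(t)|\le \ell(t)/t$ from $\lambda_1$ to $\lambda$ then gives
\[
|\delta(\lambda)-\delta(\lambda_1)|\ \le\ \int_{\lambda_1}^{\lambda}\frac{\ell(t)}{t}\,dt\ =:\ L(\lambda).
\]
By Karamata's integration theorem for slowly varying functions (\cite[Prop.~1.5.8]{BGT87}), $L$ is either bounded (if $\int^{\infty}\ell(t)/t\,dt<\infty$) or itself slowly varying at infinity with $\ell(\lambda)/L(\lambda)\to 0$ (if the integral diverges). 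In both cases $L(\lambda)/\lambda\in\RV_{-1}$, and by Potter bounds every strictly positive $\RV_0$ function decays more slowly than any $\RV_{-1}$ function, so $L(\lambda)/\lambda=o(\ell(\lambda))$. Absorbing the constant $|\delta(\lambda_1)|/\lambda=O(1/\lambda)=o(\ell(\lambda))$ yields $|\delta(\lambda)|/\lambda=o(\ell(\lambda))$, hence $|\delta(\lambda)|\le \ell(\lambda)\lambda$ for all sufficiently large $\lambda$.

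This verifies the driving hypothesis of Lemma~\ref{lem:inverse-error} with $\eta:=\ell\in\RV_0$. The auxiliary smoothness assumptions needed by Proposition~\ref{prop:stability} ($C^1$ eventually, ultimate monotonicity of $\eta$, and the regular-variation class of $\delta'$) can either be arranged by the Karamata representation theorem (\cite[\S1.3]{BGT87}), which permits replacing $\ell$ on a tail by a smooth, ultimately monotone asymptotic equivalent without affecting any asymptotic statement, or are already absorbed into the $o(\cdot)$ remainders in the conclusion of the present lemma. Direct application of Lemma~\ref{lem:inverse-error} and Proposition~\ref{prop:stability} then delivers the advertised asymptotics for $\Lambda(C)$, $\NC(C)$, and $\rho_{\mathrm{bulk}}(C)$ with $\eta=\ell$.

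The main technical point is the scale-gap between the derivative bound ($|\delta'|\le \ell/\lambda\in\RV_{-1}$) and the target bound on $\delta$ ($|\delta|\le \ell\lambda$); the only step that needs genuine care is the case distinction in Karamata's integration theorem between convergent and divergent $\int^{\infty}\ell(t)/t\,dt$ and the verification that $L(\lambda)/\lambda=o(\ell(\lambda))$ in both cases. Once this is established, the required inequality holds with substantial margin, and the remainder is a routine invocation of the stability machinery previously established.
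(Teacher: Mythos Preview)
Your proof is correct and follows essentially the same route as the paper's: integrate the derivative bound to obtain quantitative control on $|\delta(\lambda)|/\lambda$, then invoke Lemma~\ref{lem:inverse-error} and Proposition~\ref{prop:stability}. The paper's proof is a two-line sketch (``integrating \ldots\ yields $\delta(\lambda)=o(\lambda)$ with explicit control''), whereas you make the control precise via Karamata's integration theorem and the Potter-bound comparison $L(\lambda)/\lambda=o(\ell(\lambda))$; this added detail is sound and strictly refines the paper's argument.
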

\begin{proof}
Integrating $|\delta'(\lambda)|\le \ell(\lambda)/\lambda$ yields $\delta(\lambda)=o(\lambda)$
with explicit control; then apply Lemma~\ref{lem:inverse-error} and the proof of
Proposition~\ref{prop:stability}, using the uniform convergence theorem for slowly varying functions \cite[§1.8]{BGT87}.
\qed
\end{proof}

\subsection{Examples of admissible perturbations}\label{subsec:examples-stab}

We illustrate the stability result with explicit families of perturbations,
computing the slowly varying factor $\eta$ and the resulting bulk error profile
in each case.

\begin{proposition}[Admissible perturbations and bulk error profiles]\label{prop:examples-stab}
Let $C(\lambda)=\pi-\epsilon\lambda+\delta(\lambda)$ be ultimately strictly decreasing and put $y:=\pi-C\to\infty$. The following choices of $\delta$ are admissible:
\begin{enumerate}[label=\textup{(\alph*)}, itemsep=3pt]
\item Logarithmic slope distortions. For $\alpha>0$ set
\begin{equation}
\delta(\lambda)=\frac{\lambda}{(\log(e\lambda))^\alpha},\qquad
\eta(\lambda)=\frac{1}{(\log(e\lambda))^\alpha}\in\RV_0.
\end{equation}
Then
\begin{equation}
\Lambda(C)=\frac{y}{\epsilon}\Big(1+O((\log(y/\epsilon))^{-\alpha})\Big),
\end{equation}
\begin{align}
N_{\mu_C}(C)&=\gamma_d\epsilon^{-d/2}y^{d/2}\Big(1+O((\log(y/\epsilon))^{-\alpha})\Big),\\
\rho_{\mathrm{bulk}}(C)&=\tfrac{d}{2}\gamma_d\epsilon^{-d/2}y^{\frac{d-2}{2}}\Big(1+O((\log(y/\epsilon))^{-\alpha})\Big).
\end{align}

\item Iterated logarithms. For $\alpha>0$, $\beta\in\mathbb{R}$ let
\begin{equation}
\delta(\lambda)=\frac{\lambda}{(\log(e\lambda))^\alpha(\log\log(e^e\lambda))^\beta},\quad
\eta(\lambda)=\frac{1}{(\log(e\lambda))^\alpha(\log\log(e^e\lambda))^\beta}\in\RV_0.
\end{equation}
Then all conclusions in (a) hold with
\begin{equation}
O(\eta(y/\epsilon))=O\!\Big((\log(y/\epsilon))^{-\alpha}(\log\log(y/\epsilon))^{-\beta}\Big).
\end{equation}

\item General slowly varying multiplicative distortions.
Let $L\in\RV_0$ be ultimately monotone with $L(\lambda)\to0$, and put $\delta(\lambda)=\lambda L(\lambda)$ (so $\eta=L$). Then
\begin{align}
\Lambda(C)&=\frac{y}{\epsilon}\big(1+O(L(y/\epsilon))\big),\\
N_{\mu_C}(C)&=\gamma_d\epsilon^{-d/2}y^{d/2}\big(1+O(L(y/\epsilon))\big),
\end{align}
and the same relative error holds for $\rho_{\mathrm{bulk}}(C)$.

\item Bounded additive offsets. If $\delta(\lambda)=O(1)$ and $C$ is ultimately strictly decreasing, then
\begin{align}
\Lambda(C)&=\frac{y}{\epsilon}+O(1),\\
N_{\mu_C}(C)&=\gamma_d\epsilon^{-d/2}y^{d/2}\Big(1+O(y^{-1})\Big),
\end{align}
\begin{equation}
\rho_{\mathrm{bulk}}(C)=\tfrac{d}{2}\gamma_d\epsilon^{-d/2}y^{\frac{d-2}{2}}\Big(1+O(y^{-1})\Big).
\end{equation}

\item Sublogarithmic additive terms. For $0<\beta<1$, let
\begin{equation}
\delta(\lambda)=(\log(e\lambda))^{\beta}.
\end{equation}
Then $\delta(\lambda)=o(\lambda)$ and
$\delta'(\lambda)=\beta(\log(e\lambda))^{\beta-1}\lambda^{-1}\le \ell(\lambda)/\lambda$ with $\ell(\lambda)=\beta(\log(e\lambda))^{\beta-1}\in\RV_0$, $\ell(\lambda)\to0$. Hence
\begin{equation}
\Lambda(C)=\frac{y}{\epsilon}\Big(1+O((\log(y/\epsilon))^{\beta-1})\Big),
\end{equation}
and the corresponding $N_{\mu_C}(C)$ and $\rho_{\mathrm{bulk}}(C)$ have the same relative error $O((\log(y/\epsilon))^{\beta-1})$.

\item Mild oscillatory but BV distortions. Let $L\in\RV_0$, $L(\lambda)\to0$, ultimately of bounded variation, and $\theta$ bounded with $\theta(\lambda)\to0$, ultimately of bounded variation. For $\delta(\lambda)=\lambda L(\lambda)(1+\theta(\lambda))$ one has $\eta(\lambda)=L(\lambda)(1+\theta(\lambda))\in\RV_0$ and the conclusions of (c) hold with error $O(L(y/\epsilon))$.
\end{enumerate}
\end{proposition}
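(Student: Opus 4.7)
The plan is to reduce each of the six items to the machinery already established in Lemma~\ref{lem:inverse-error}, Proposition~\ref{prop:stability}, or Lemma~\ref{lem:delta-growth}; five of them fit directly under the multiplicative framework $\delta(\lambda)=\lambda\,\eta(\lambda)$ with $\eta\in\RV_0$, $\eta\to 0$, while case (d) requires a small direct argument because $\delta(\lambda)/\lambda$ is not slowly varying. As a preliminary step, I would check ultimate strict monotonicity of $\lambda\mapsto C(\lambda)$ by computing $C'(\lambda)=-\epsilon+\delta'(\lambda)$ and observing that $\delta'(\lambda)\to 0$ in every case, so $C'(\lambda)\le -\epsilon/2$ for large $\lambda$.

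For cases (a), (b), (c), and (f), I would identify $\eta(\lambda):=\delta(\lambda)/\lambda$ and verify the three hypotheses of Proposition~\ref{prop:stability}. In (a), $\eta=(\log(e\lambda))^{-\alpha}$ is a power of a logarithm, hence slowly varying, ultimately monotone, and tending to $0$ since $\alpha>0$; plugging into Proposition~\ref{prop:stability} yields the stated estimates with relative error $O((\log(y/\epsilon))^{-\alpha})$. Case (b) is identical with $\eta=(\log(e\lambda))^{-\alpha}(\log\log(e^e\lambda))^{-\beta}$, a product of powers of iterated logarithms and therefore in $\RV_0$. Case (c) is the verbatim reading of Proposition~\ref{prop:stability} with $\eta=L$. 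Case (f) uses $\eta=L(1+\theta)$; since $\theta\to 0$ and is BV on a tail, and $L\in\RV_0$ is likewise BV on a tail, $\eta$ lies in $\RV_0$ and is BV on a tail, so Proposition~\ref{prop:stability} applies and the $(1+\theta)$ factor is absorbed into $O(L(y/\epsilon))$.

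Case (e) does not fit the multiplicative pattern because $\delta(\lambda)/\lambda=(\log(e\lambda))^\beta/\lambda$ is not slowly varying. Here I would invoke Lemma~\ref{lem:delta-growth}: computing $|\delta'(\lambda)|=\beta(\log(e\lambda))^{\beta-1}\lambda^{-1}=\ell(\lambda)/\lambda$ with $\ell(\lambda):=\beta(\log(e\lambda))^{\beta-1}\in\RV_0$ and $\ell\to 0$ because $\beta<1$, the conclusion follows with $\eta=\ell$, producing the stated relative error $O((\log(y/\epsilon))^{\beta-1})$.

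The main obstacle is case (d), where $\delta=O(1)$ forces $\eta(\lambda)=O(\lambda^{-1})\notin\RV_0$, so neither Proposition~\ref{prop:stability} nor Lemma~\ref{lem:delta-growth} applies verbatim. Here I would argue directly from the exact identity $\NC(C)=\NDelta(\Lambda(C))$ of Lemma~\ref{lem:exact}. Inverting $y=\epsilon\lambda-\delta(\lambda)$ with $|\delta|\le M$ yields $\Lambda(C)=y/\epsilon+O(1)$ by a one-line bounded-fixed-point argument; expanding $(y/\epsilon+O(1))^{d/2}=(y/\epsilon)^{d/2}(1+O(y^{-1}))$ and inserting Weyl's law gives the counting estimate. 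For the density, $F(y):=\NC(\pi-y)$ is nondecreasing and satisfies $F(y)\sim Ay^{d/2}(1+O(y^{-1}))$ with $1+O(y^{-1})\in\RV_0$, so the monotone density theorem (as applied in Proposition~\ref{prop:stability}) transfers the same relative error $O(y^{-1})$ to $F'(y)$, i.e.\ to $\rho_{\mathrm{bulk}}(C)$. The only subtleties are ensuring the $O(1)$ term does not destroy monotonicity of $C(\lambda)$ eventually (handled by the standing assumption) and that the perturbation $(1+O(y^{-1}))$ is genuinely slowly varying so the monotone density step goes through without change of exponent.
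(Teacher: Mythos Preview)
Your proposal is correct and follows essentially the same route as the paper: items (a), (b), (c), (f) via Proposition~\ref{prop:stability} with the indicated $\eta\in\RV_0$; item (e) via Lemma~\ref{lem:delta-growth} with $\ell(\lambda)=\beta(\log(e\lambda))^{\beta-1}$; and item (d) by the direct one-step inversion $\Lambda=y/\epsilon+O(1)$ followed by Weyl's law and the monotone density theorem. The only minor inaccuracy is invoking Lemma~\ref{lem:exact} in case~(d), since that lemma is stated for the unperturbed affine encoding; the correct justification is the monotonicity identity $\NC(C)=\NDelta(\Lambda(C))$ used in the proof of Lemma~\ref{lem:inverse-error}, but this does not affect the argument.
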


\begin{proof}
Items \textup{(a)}, \textup{(b)}, \textup{(c)}, \textup{(f)} follow from Proposition~\ref{prop:stability} with the displayed choices of $\eta\in\RV_0$ (uniform convergence for slowly varying functions supplies the replacement $\eta(\Lambda)\rightsquigarrow\eta(y/\epsilon)$). Item \textup{(e)} follows from Lemma~\ref{lem:delta-growth} with $\ell(\lambda)=\beta(\log(e\lambda))^{\beta-1}$. Item \textup{(d)} is obtained by a one--step inversion of $y=\epsilon\Lambda-\delta(\Lambda)$ with $\delta=O(1)$, which yields $\Lambda=y/\epsilon+O(1)$, and then by applying Weyl's law and Karamata's monotone density theorem to pass to $N_{\mu_C}$ and $\rho_{\mathrm{bulk}}$.
\qed
\end{proof}

\begin{remark}[Beyond $\RV_0$]
Sublinear power terms $\delta(\lambda)=\lambda^q$ with $0<q<1$ lie outside the class $\lambda\,\eta(\lambda)$ with $\eta\in\RV_0$, but a direct fixed--point estimate gives
\(
\Lambda(C)=\frac{y}{\epsilon}\big(1+O((y/\epsilon)^{q-1})\big),
\)
hence the same conclusions for $N_{\mu_C}$ and $\rho_{\mathrm{bulk}}$ with relative error $O((y/\epsilon)^{q-1})\to0$.
\end{remark}

\section{Geometric models and Kre\u{\i}n realization}\label{sec:models}

We specialize the general theory to explicit geometric models.
Section~\ref{subsec:const-curv} computes the exact scaling of Seeley--DeWitt
heat coefficients \cite{MinakshisundaramPleijel1949,Seeley1967,Gilkey1995}
for constant-curvature spaces, confirming the alignment
$a_{2m}^{\mathrm{edge}}=\epsilon^{m-d/2}a_{2m}$.
Section~\ref{subsec:Krein} addresses the realization of the full spectral
measure with multiplicities: a Kre\u{\i}n string \cite{Krein1952,EckhardtTeschl2013}
realizes any such measure uniquely, while no classical smooth Sturm--Liouville
operator can do so for $d>1$ \cite{Titchmarsh1962,LevitanSargsjan1991}.
Section~\ref{subsec:clustering} analyzes spherical eigenvalue clustering and
shows it averages out in the bulk without affecting the Weyl exponent.

\subsection{Constant curvature spaces}\label{subsec:const-curv}

For manifolds of constant sectional curvature the Seeley--DeWitt coefficients
\cite{Gilkey1995,Seeley1967} are universal polynomials in $K$ and $V$,
and the heat-transfer identity $H_{\mathrm{edge}}(s)=\Theta_\Delta(\epsilon s)$
yields an explicit scaling law for every coefficient.

\begin{proposition}[Heat-coefficient alignment for constant curvature]\label{prop:const-curv-alignment}
Let $(M^d,g)$ be a closed Riemannian manifold of constant sectional curvature $K$ and volume $V:=\mathrm{Vol}(M)$. Its heat trace admits
\begin{equation}
\Theta_\Delta(t)\sim (4\pi t)^{-d/2}\Big(a_0+a_2\,t+a_4\,t^2+\cdots\Big)\qquad(t\downarrow 0),
\end{equation}
with $a_0=V$, $a_2=\tfrac16\!\int_M R\,d\mathrm{vol}_g=\tfrac16 d(d-1)K\,V$, and, more generally, $a_{2m}$ a universal polynomial in $K$ times $V$ (odd coefficients vanish for the Laplace--Beltrami operator on a closed manifold). Under the affine encoding $C_\ell=\pi-\epsilon\lambda_\ell$, the edge-variable heat trace
\begin{equation}
H_{\mathrm{edge}}(s)=\sum_\ell m_\ell e^{-s(\pi-C_\ell)}=\Theta_\Delta(\epsilon s)
\end{equation}
has the small-$s$ expansion
\begin{equation}
H_{\mathrm{edge}}(s)\sim (4\pi s)^{-d/2}\Big(\underbrace{\epsilon^{-d/2}a_0}_{\text{leading}}+\underbrace{\epsilon^{-(d-2)/2}a_2}_{\text{first curvature term}}\,s+\underbrace{\epsilon^{-(d-4)/2}a_4}_{\text{quadratic in }K}\,s^2+\cdots\Big),\qquad(s\downarrow 0).
\end{equation}
Equivalently, the edge-side coefficients are
\begin{equation}
a^{\mathrm{edge}}_{2m}=\epsilon^{\,m-\frac d2}\,a_{2m},\qquad m=0,1,2,\dots,
\end{equation}
and $a^{\mathrm{edge}}_{2m+1}=0$.
\end{proposition}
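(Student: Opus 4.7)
The proof assembles three ingredients already available or standard, followed by a routine substitution. The first step is to invoke Lemma~\ref{lem:heat-seeley} for the general Seeley--DeWitt expansion $\Theta_\Delta(t)\sim(4\pi t)^{-d/2}\sum_{j\ge 0}a_j t^{j/2}$, and to note that since $\partial M=\emptyset$ the odd-index coefficients $a_{2j+1}$ vanish. This is the standard parity property of heat-kernel coefficients for the scalar Laplace--Beltrami operator on a closed manifold: in Gilkey's classification, the local invariants contributing at half-integer weights are boundary integrals which are absent here, so the expansion takes the purely integer-power form $\Theta_\Delta(t)\sim(4\pi t)^{-d/2}\sum_{m\ge 0}a_{2m}t^m$ (cf. Lemma~\ref{lem:heat-bounds}).

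Second, I specialize to constant sectional curvature $K$. Here $\nabla\mathrm{Rm}\equiv 0$ and $R_{ijkl}=K(g_{ik}g_{jl}-g_{il}g_{jk})$, so every universal curvature polynomial of weight $2m$ appearing in $a_{2m}$ (via Lemma~\ref{lem:heat-bounds}) collapses to a numerical multiple, depending only on $d$, of $K^m$ times the volume form. Integration then yields $a_{2m}=c_{2m}(d)\,K^m\,V$ for universal constants $c_{2m}(d)$, which gives the ``universal polynomial in $K$ times $V$'' claim. The two lowest are explicit: $a_0=\mathrm{Vol}(M)=V$ from the heat-kernel normalization, and $a_2=\tfrac{1}{6}\!\int_M R\,d\mathrm{vol}_g$; since the scalar curvature on a constant-curvature space is $R=d(d-1)K$ (trace-trace of the model Riemann tensor), this delivers $a_2=\tfrac{1}{6}d(d-1)KV$.

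Third, I invoke the exact identity $H_{\mathrm{edge}}(s)=\Theta_\Delta(\epsilon s)$ from Proposition~\ref{prop:heat-zeta} and substitute $t=\epsilon s$ into the expansion above. Because $\epsilon>0$ is a fixed positive rescaling, the asymptotic relation $\Theta_\Delta(t)\sim(4\pi t)^{-d/2}\sum a_{2m}t^m$ transports term-by-term to the variable $s$, and using $(4\pi\epsilon s)^{-d/2}=\epsilon^{-d/2}(4\pi s)^{-d/2}$ one obtains
\begin{equation*}
H_{\mathrm{edge}}(s)\sim(4\pi s)^{-d/2}\sum_{m\ge 0}\epsilon^{m-d/2}\,a_{2m}\,s^m\qquad(s\downarrow 0).
\end{equation*}
Reading off coefficients gives $a^{\mathrm{edge}}_{2m}=\epsilon^{\,m-d/2}a_{2m}$ and, since only integer powers of $s$ appear, $a^{\mathrm{edge}}_{2m+1}=0$.

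The main obstacle is conceptual rather than technical: one must be careful that the two scalings interact correctly, namely that the $\epsilon^{-d/2}$ from the overall prefactor $(4\pi\epsilon s)^{-d/2}$ combines with the $\epsilon^{m}$ from the $m$-th heat coefficient to yield the exponent shift $m-d/2$. Everything else is either cited (the Seeley--DeWitt form, Gilkey's universality, parity on closed manifolds) or a direct computation on the constant-curvature model. No Tauberian input is needed because this is an Abelian (small-argument) statement for the heat trace.
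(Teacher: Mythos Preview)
Your proposal is correct and follows essentially the same route as the paper: invoke the Seeley--DeWitt expansion with vanishing odd coefficients on a closed manifold, specialize the curvature invariants to constant sectional curvature, apply the identity $H_{\mathrm{edge}}(s)=\Theta_\Delta(\epsilon s)$ from Proposition~\ref{prop:heat-zeta}, and read off the rescaled coefficients after the substitution $t=\epsilon s$. Your treatment is slightly more explicit about why the $a_{2m}$ reduce to $c_{2m}(d)K^mV$ (via $\nabla\mathrm{Rm}\equiv 0$ and the model Riemann tensor), but the overall argument is the same.
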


\begin{proof}
For constant curvature, the Seeley--DeWitt expansion on closed manifolds contains only integer powers; $a_0=V$ and $a_2=\frac16\int R=\frac16 d(d-1)K\,V$; higher $a_{2m}$ are universal polynomials in the curvature (hence in $K$) times $V$ (\cite{Gilkey1995,Seeley1967}). By Proposition~\ref{prop:heat-zeta}, $H_{\mathrm{edge}}(s)=\Theta_\Delta(\epsilon s)$; substituting $t=\epsilon s$ gives
\begin{equation}
\Theta_\Delta(\epsilon s)=(4\pi \epsilon s)^{-d/2}\sum_{m\ge0} a_{2m}(\epsilon s)^m
=(4\pi s)^{-d/2}\sum_{m\ge0}\epsilon^{\,m-\frac d2}a_{2m}\,s^m,
\end{equation}
which yields the stated scaling of coefficients; the vanishing of odd terms persists on the edge side since the manifold has no boundary.
\qed
\end{proof}

\begin{corollary}[First two edge-side terms]\label{cor:const-curv-two-terms}
With notation as above,
\begin{equation}
H_{\mathrm{edge}}(s)\sim (4\pi s)^{-d/2}\Big(\epsilon^{-d/2}\,V+\epsilon^{-(d-2)/2}\,\tfrac16 d(d-1)K\,V\; s+O(s^2)\Big),\qquad s\downarrow 0.
\end{equation}
In particular, for $d=2$ one has
$H_{\mathrm{edge}}(s)\sim (4\pi s)^{-1}\big(\epsilon^{-1}V+\tfrac{K V}{3}\,s+O(s^2)\big)$.
\end{corollary}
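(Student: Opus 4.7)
The plan is to read this off directly from Proposition~\ref{prop:const-curv-alignment}, which already supplies the full small-$s$ expansion of $H_{\mathrm{edge}}$ together with the closed form $a^{\mathrm{edge}}_{2m}=\epsilon^{\,m-d/2}a_{2m}$ and the identification $a_0=V$, $a_2=\tfrac16 d(d-1)K\,V$ for closed manifolds of constant sectional curvature $K$. Nothing genuinely new needs to be proved; the corollary is a truncation of that expansion followed by a substitution $d=2$.

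First I would extract the $m=0$ and $m=1$ terms, yielding $\epsilon^{-d/2}V$ and $\epsilon^{-(d-2)/2}\tfrac16 d(d-1)K\,V$ as the coefficients of $s^0$ and $s^1$ inside the parenthesis, respectively. Then I would absorb the tail $\sum_{m\ge 2}\epsilon^{\,m-d/2}a_{2m}\,s^m$ into $O(s^2)$ as $s\downarrow 0$, using the universal coefficient bounds recorded in Lemma~\ref{lem:heat-bounds} to keep the implied constant under control on the fixed $(M,g)$ and for $\epsilon$ in a compact subset of $(0,\infty)$. At this stage the first displayed asymptotic is complete once the common prefactor $(4\pi s)^{-d/2}$ is pulled outside.

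For the $d=2$ specialization I would simply substitute $d=2$ in the resulting formula: the prefactor becomes $(4\pi s)^{-1}$, the leading coefficient collapses to $\epsilon^{-1}V$, the exponent of $\epsilon$ in the curvature term becomes $-(d-2)/2=0$, and the combinatorial factor simplifies via $\tfrac16 d(d-1)=\tfrac13$, producing the displayed term $\tfrac{KV}{3}\,s$. The only non-mechanical point to check is that the $O(s^2)$ is interpreted inside the parenthesis after the prefactor has been factored out, so that the stated error is uniform in the same sense as in Proposition~\ref{prop:const-curv-alignment}; no further analytic input is required, and I do not expect any substantive obstacle here.
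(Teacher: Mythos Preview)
Your proposal is correct and matches the paper's approach: the corollary is stated without a separate proof because it is an immediate truncation of the expansion in Proposition~\ref{prop:const-curv-alignment}, together with the substitution $d=2$. The only minor remark is that invoking Lemma~\ref{lem:heat-bounds} to justify the $O(s^2)$ tail is more care than the paper itself takes, since the asymptotic-series interpretation of the Seeley--DeWitt expansion already supplies this.
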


\begin{remark}[Zeta residues]
Using $\zeta_{\mathrm{edge}}(u)=\epsilon^{-u}\zeta_\Delta(u)$, the simple poles of $\zeta_{\mathrm{edge}}$ sit at $u=\frac{d-2m}{2}$ with residues $\epsilon^{-(d-2m)/2}\operatorname{Res}_{u=(d-2m)/2}\zeta_\Delta(u)$, which are proportional to $a_{2m}$.
\end{remark}

\subsection{Kre\u{\i}n string realization of the spectral measure}\label{subsec:Krein}

The affine encoding pushes the Riemannian spectrum to a discrete measure
$\mu_C$ on $(-\infty,\pi)$ with prescribed multiplicities.
We prove that this measure is always realizable by a Kre\u{\i}n string \cite{Krein1952}
(a generalized Sturm--Liouville model with measure coefficients), uniquely up
to string equivalence \cite{EckhardtTeschl2013}, and that no classical smooth
Sturm--Liouville operator can produce the same spectral growth for $d>1$
\cite{Titchmarsh1962,LevitanSargsjan1991}.

\begin{theorem}[Multiplicity realization via a Kre\u{\i}n string]\label{thm:MSL-realization}
Let $y_\ell:=\pi-C_\ell=\epsilon\,\lambda_\ell$ and consider the positive discrete measure
\begin{equation}
\mu^+ := \sum_{\ell\ge 0} m_\ell\,\delta_{y_\ell}\quad \text{on } (0,\infty),
\end{equation}
where $y_\ell\to+\infty$. Then there exists a Kre\u{\i}n string (a generalized Sturm--Liouville model with measure coefficients) whose Weyl--Titchmarsh $m$-function is a Stieltjes function having $\mu^+$ as its spectral measure. The string is of finite length if and only if $\int (1+t)^{-1}\,d\mu^+(t)<\infty$. In particular, the spectrum is pure point and coincides with $\{y_\ell\}$ with multiplicities $m_\ell$. Under the affine change $C=\pi-y$, the resulting spectral measure equals $\mu_C=\sum_\ell m_\ell\,\delta_{C_\ell}$.
\end{theorem}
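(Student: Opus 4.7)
The approach is to invoke Kre\u{\i}n's inverse spectral theorem for strings, which yields a bijection between (equivalence classes of) non-decreasing mass distributions $M$ on an interval $[0,L]$ and a distinguished class of Nevanlinna/Stieltjes functions on $\mathbb{C}\setminus[0,\infty)$; the latter are Cauchy transforms of positive Borel measures on $[0,\infty)$, which in turn arise as the principal spectral measures of the associated selfadjoint string operators on $L^2(dM)$. Two non-trivial issues must be addressed in our setting: a single classical scalar Kre\u{\i}n string has \emph{simple} spectrum, so a direct realization cannot produce multiplicities $m_\ell>1$ in one string, and the finite-length assertion has to be matched to the Kac--Kre\u{\i}n integrability criterion.

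First I would associate to $\mu^+$ its (regularized) Cauchy transform
\begin{equation}
h(z):=\int_0^\infty\!\Big(\frac{1}{t-z}-\frac{t}{1+t^{2}}\Big)\,d\mu^+(t)+\alpha z+\beta,\qquad z\in\mathbb{C}\setminus\mathrm{supp}(\mu^+),
\end{equation}
where the Nevanlinna subtraction guarantees integrability under the polynomial growth $\mu^+([0,\Lambda])\asymp \Lambda^{d/2}$ dictated by Weyl's law, placing $h$ in Kre\u{\i}n's Stieltjes class. To accommodate multiplicities I would stratify
\begin{equation}
\mu^+=\sum_{k\ge 1}\nu_k,\qquad \nu_k:=\sum_{\ell:\,m_\ell\ge k}\delta_{y_\ell},
\end{equation}
so each $\nu_k$ is a multiplicity-free atomic measure supported on a subset of $\{y_\ell\}$. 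For each $k$, the Kac--Kre\u{\i}n inverse theorem produces a unique string $(M_k,L_k)$ whose Weyl--Titchmarsh $m$-function is the Cauchy transform of $\nu_k$ and whose associated operator on $L^2(dM_k)$ has purely discrete \emph{simple} spectrum equal to $\mathrm{supp}(\nu_k)$.

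Next I would form the orthogonal direct sum $\mathcal{S}:=\bigoplus_{k=1}^{K}\mathcal{S}_k$, with $K:=\sup_\ell m_\ell\in\{1,2,\dots,\infty\}$; this is a \emph{generalized} Sturm--Liouville object with block-diagonal measure coefficients, which falls within the Kre\u{\i}n-string umbrella as stated in the theorem. Its spectral measure equals $\sum_k\nu_k=\mu^+$, so every $y_\ell$ appears with the prescribed multiplicity $m_\ell$ and the spectrum is pure point with atoms exactly $\{y_\ell\}$. For the length criterion, the Kac--Kre\u{\i}n identification gives the length of $\mathcal{S}_k$ (up to normalization) as $\int_0^\infty(1+t)^{-1}\,d\nu_k(t)$; by Tonelli the total length of $\mathcal{S}$ is finite if and only if $\int_0^\infty(1+t)^{-1}\,d\mu^+(t)<\infty$, which is the claimed criterion. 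Finally, the affine change $C=\pi-y$, a strict bijection by Lemma~\ref{lem:mono-encoding}, transports $\mu^+$ atom-by-atom to $\mu_C=\sum_\ell m_\ell\,\delta_{C_\ell}$, completing the identification.

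The main obstacle is realizing the eigenvalue multiplicities within a genuinely one-dimensional model: scalar Kre\u{\i}n strings, being second-order ODE objects, inherit simple spectrum, so accommodating multiplicities that can grow polynomially (as on $S^d$) forces either the direct-sum construction outlined above or a passage to matrix/operator-valued mass coefficients. Either is standard, but when $K=\infty$ one must verify that the integrability of $\mu^+$ against $(1+t)^{-1}$ (or, absent that, the polynomial growth of $N_\Delta$) preserves compactness of the resolvent of $\mathcal{S}$, so the spectrum remains purely discrete with exactly the multiplicities $m_\ell$.
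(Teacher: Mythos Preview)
Your construction is correct but takes a different route from the paper, and the divergence stems from how you read the word ``multiplicities.'' The paper applies Kre\u{\i}n's correspondence \emph{directly} to the measure $\mu^+$: it forms the single Stieltjes function $m(z)=\sum_{\ell}\frac{m_\ell}{y_\ell-z}$ and invokes the Kre\u{\i}n bijection (Eckhardt--Teschl, Thm.~4.1) to obtain one scalar string whose spectral measure has atoms of \emph{weight} $m_\ell$ at $y_\ell$. The integers $m_\ell$ are thus realized as norming-constant weights, not as dimensions of eigenspaces; the paper says this explicitly in the remark after Lemma~\ref{lem:avg-multiplicity} (``multiplicities are encoded as the \emph{weights} of the atomic spectral measure''). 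Under that reading the simple-spectrum obstruction you raise is moot, and no stratification or direct sum is needed.

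Your stratification $\mu^+=\sum_k\nu_k$ and direct sum $\mathcal S=\bigoplus_k\mathcal S_k$ instead produce an operator whose eigenspaces genuinely have dimension $m_\ell$, which is a stronger realization at the cost of leaving the scalar-string category. Two small technical points: (i) the Kac--Kre\u{\i}n criterion says finite length is \emph{equivalent} to $\int(1+t)^{-1}\,d\nu_k<\infty$, not that the length equals that integral, so your Tonelli step should be phrased as ``each $\mathcal S_k$ has finite length iff \ldots, and this holds for all $k$ simultaneously iff $\int(1+t)^{-1}\,d\mu^+<\infty$'' rather than summing lengths; (ii) the regularized Nevanlinna representation you write for $h(z)$ is not a Stieltjes function in the strict sense once the subtraction term is present---for $d\ge 2$ the integral $\int(1+t)^{-1}\,d\mu^+$ diverges and the string is of infinite length, which the paper's statement already allows for.
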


\begin{proof}
Consider the Herglotz--Stieltjes function
\begin{equation}
m(z)=\int_{(0,\infty)} \frac{d\mu^+(t)}{t-z}
=\sum_{\ell\ge 0}\frac{m_\ell}{y_\ell-z},\qquad z\in\mathbb{C}\setminus(0,\infty).
\end{equation}
By Kre\u{\i}n's correspondence for strings, every Stieltjes function arises as the Weyl--Titchmarsh
function of a (finite-length, up to reparametrization) Kre\u{\i}n string, and the associated
spectral measure is exactly $\mu^+$; \cite{Krein1952} and the modern treatment
in \cite[Thm.~4.1]{EckhardtTeschl2013}. Applying the affine change $y=\pi-C$ transports
$\mu^+$ to $\mu_C$.
\qed
\end{proof}

\begin{theorem}[Kre\u{\i}n uniqueness up to equivalence]\label{thm:Krein-uniqueness}
The Kre\u{\i}n string realizing $\mu^+$ in Theorem~\ref{thm:MSL-realization} is unique up to
string equivalence (reparametrization of the spatial variable); \cite[Thm.~4.1]{EckhardtTeschl2013}.
\end{theorem}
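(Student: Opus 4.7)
The plan is to reduce the uniqueness assertion to the classical inverse spectral theorem of Kreĭn in two clean steps: first recover the Weyl--Titchmarsh $m$-function from the spectral data $\mu^+$, then recover the string itself (up to reparametrization) from the $m$-function.

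For the first step, suppose two Kreĭn strings $S_1$ and $S_2$ both realize $\mu^+$ as their principal spectral measure. By the construction used in the proof of Theorem~\ref{thm:MSL-realization}, each of their Weyl--Titchmarsh functions admits the Herglotz--Stieltjes representation
\[
m_{S_i}(z)=\int_{(0,\infty)}\frac{d\mu^+(t)}{t-z},\qquad i=1,2,
\]
for $z\in\mathbb{C}\setminus(0,\infty)$ (the integral converges under the integrability condition in Theorem~\ref{thm:MSL-realization}; in the infinite-length case one uses the standard subtracted form $\int[(t-z)^{-1}-t/(1+t^2)]\,d\mu^+(t)$). Since the spectral measure on both sides is the same positive Radon measure $\mu^+$, and since the Herglotz--Stieltjes representation is uniquely determined by its measure (up to an additive real constant and a point mass at zero, both of which are absorbed by the reparametrization freedom of strings and the choice of left-endpoint boundary data), we conclude $m_{S_1}\equiv m_{S_2}$.

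For the second step, I would invoke Kreĭn's inverse spectral theorem for strings as codified in Eckhardt--Teschl~\cite[Thm.~4.1]{EckhardtTeschl2013}: the map $S\mapsto m_S$ from Kreĭn strings modulo spatial reparametrization to the Stieltjes class is a \emph{bijection}. Applied to the equality $m_{S_1}=m_{S_2}$ from the first step, this immediately yields that $S_1$ and $S_2$ are equivalent strings, which is exactly the claim. Transport under the affine change $y=\pi-C$ is an isometric bijection between $\mu^+$ and $\mu_C$ and does not affect the argument.

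The main obstacle is not mathematical but bookkeeping: aligning our conventions (a pure point measure on $\{y_\ell\}=\{\epsilon\lambda_\ell\}$, pushed forward to $\mu_C$ under $y\mapsto \pi-y$) with the precise conventions of the cited inverse theorem. In particular, one must carefully track (i) whether point masses at $0$ or $\infty$ are admissible, (ii) the exact definition of ``string equivalence'' (reparametrization of the spatial variable, together with admissible shifts of the regular endpoint and boundary mass), and (iii) the dichotomy between finite- and infinite-length strings controlled by $\int(1+t)^{-1}\,d\mu^+(t)$. Once these matters are fixed in the conventions of~\cite{EckhardtTeschl2013}, uniqueness follows at once from their Theorem~4.1 combined with the elementary uniqueness of the Herglotz--Stieltjes representation of a positive Radon measure.
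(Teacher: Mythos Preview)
Your proposal is correct and follows essentially the same route as the paper: both arguments recover the Weyl--Titchmarsh function $m$ from the spectral measure $\mu^+$ via the Stieltjes representation and then invoke the bijectivity in \cite[Thm.~4.1]{EckhardtTeschl2013} to conclude uniqueness of the string up to equivalence. Your write-up is somewhat more explicit about the bookkeeping (additive constants, point masses, the finite/infinite-length dichotomy), but the logical skeleton is identical to the paper's proof.
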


\begin{proof}
Let $m(z)$ denote the Weyl--Titchmarsh function of a (finite-length) Kre\u{\i}n string.
For a finite string, $m$ is a Stieltjes function and admits an integral
representation whose spectral measure is precisely $\mu^+$. By \cite[Thm.~4.1]{EckhardtTeschl2013},
the map sending a Kre\u{\i}n string (up to equivalence, i.e., reparametrization of the
spatial variable) to its Weyl--Titchmarsh function is bijective; in particular,
$m(z)$ uniquely determines, and is uniquely determined by, the string up to
equivalence. Since $\mu^+$ uniquely specifies $m(z)$ (and conversely) via the
Stieltjes representation, the realizing Kre\u{\i}n string is unique up to equivalence.
\qed
\end{proof}

\begin{corollary}[Quadratic encoder sequences are realizable by a Kre\u{\i}n string]\label{cor:quadratic-Krein}
Fix $\kappa>0$ and $w_n>0$ with $\sum_{n\ge0}\frac{w_n}{1+\kappa(n+1)^2}<\infty$ (e.g.\ $w_n\equiv1$). Let
\begin{equation}
x_n:=\kappa(n+1)^2,\qquad \mu^+:=\sum_{n\ge0} w_n\,\delta_{x_n}.
\end{equation}
Then there exists a (finite-length, up to equivalence) Kre\u{\i}n string whose spectral measure equals $\mu^+$. Under the affine change $C=\pi-x$, the encoded sequence is
\begin{equation}
C_n=\pi-x_n=\pi-\kappa(n+1)^2,
\end{equation}
i.e.\ the quadratic model spectrum $\hat C_n$ is realized exactly (with weights $w_n$).
\end{corollary}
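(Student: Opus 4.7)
The plan is to invoke Theorem \ref{thm:MSL-realization} as a direct specialization, taking $y_n := x_n = \kappa(n+1)^2$ and multiplicities $m_n := w_n$. The only hypothesis of that theorem not automatic from the setup is the integrability condition that characterizes Kreĭn strings of finite length, and the hypothesis of the corollary is designed to furnish exactly that.

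First I would verify the structural requirements: since $\kappa>0$ we have $x_n > 0$ for every $n\ge 0$ and $x_n\to+\infty$, so $\mu^+=\sum_n w_n\delta_{x_n}$ is a positive discrete measure on $(0,\infty)$ with support tending to infinity, matching the setup of Theorem \ref{thm:MSL-realization}. Second, I would translate the finite-length condition into the series form of the corollary: for a discrete measure with atoms $\{x_n\}$,
\[
\int_{(0,\infty)}\frac{d\mu^+(t)}{1+t} \;=\; \sum_{n\ge 0}\frac{w_n}{1+\kappa(n+1)^2},
\]
which is finite by hypothesis (and elementarily so for $w_n\equiv 1$, since the denominator grows like $n^2$). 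Third, I would apply Theorem \ref{thm:MSL-realization} to produce a Kreĭn string, of finite length up to equivalence, whose Weyl--Titchmarsh $m$-function is the Stieltjes function $m(z)=\sum_{n\ge0} w_n/(\kappa(n+1)^2-z)$ and whose spectral measure equals $\mu^+$; uniqueness up to string equivalence is Theorem \ref{thm:Krein-uniqueness}. Finally, the affine change $C=\pi-x$ is strictly decreasing, so by the multiplicity/order preservation of Lemma \ref{lem:mono-encoding} (applied with $\epsilon=1$) the pushforward of $\mu^+$ is $\sum_n w_n\delta_{C_n}$ with $C_n=\pi-\kappa(n+1)^2$, as claimed.

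No step is a genuine obstacle: the corollary is a clean specialization of the realization theorem, and the only point requiring a line of writing is the Fubini-style identification of the integral hypothesis with the explicit series hypothesis. The one thing I would be careful to flag in the write-up is that both finiteness of length and exactness of weights rest entirely on the parent theorem; the corollary adds no new analytic content, only a concrete instance that will be used later to instantiate the quadratic encoder model.
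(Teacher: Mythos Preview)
Your proposal is correct and follows essentially the same approach as the paper: verify that the displayed summability hypothesis is exactly the integrability condition $\int (1+t)^{-1}\,d\mu^+(t)<\infty$ for a finite-length string, then apply Theorem~\ref{thm:MSL-realization} and the affine change $C=\pi-x$. You add a bit more detail (the explicit Stieltjes function, the reference to uniqueness via Theorem~\ref{thm:Krein-uniqueness}, and the appeal to Lemma~\ref{lem:mono-encoding} for the pushforward), but none of this deviates from the paper's argument.
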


\begin{proof}
The integrability $\int (1+t)^{-1}\,d\mu^+(t)<\infty$ holds by the displayed summability, which ensures a finite-length string in the sense of \cite[Thm.~4.1]{EckhardtTeschl2013}. The rest follows from Theorem~\ref{thm:MSL-realization} and the affine change $y=\pi-C$.
\qed
\end{proof}

\begin{lemma}[No classical smooth SL realization for $d>1$]\label{lem:no-classical-SL}
Let $\mathcal L$ be a classical scalar Sturm--Liouville operator on $[a,b]$ with separated self-adjoint b.c.,
\begin{equation}
-(p(x) y')'+q(x)\,y=\lambda\,r(x)\,y,
\end{equation}
where $p,r\in C^{1}([a,b])$, $p>0$, $r>0$, and $q\in L^{1}([a,b])$. Then
\begin{equation}
N_{\mathcal L}(\Lambda)
= \frac{\sqrt{\Lambda}}{\pi}\int_{a}^{b}\!\sqrt{\frac{r(x)}{p(x)}}\,dx \;+\; o(\sqrt{\Lambda})
\quad (\Lambda\to\infty),
\end{equation}
hence $\lambda_n \sim \big(\frac{\pi n}{\int_{a}^{b}\sqrt{r/p}\,dx}\big)^{2}$.
In particular, for $d>1$ no such smooth 1D operator can have an entire spectrum with counting exponent $d/2>1/2$ (equivalently, growth $\lambda_n\asymp n^{2/d}$). The same holds for finite matrix SL systems: $N(\Lambda)\sim m\,c\,\sqrt{\Lambda}$.
\end{lemma}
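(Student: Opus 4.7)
\emph{Strategy.} The plan is to reduce the SL problem to canonical Schr\"odinger form on a fixed interval by a Liouville transformation, derive the eigenvalue asymptotics by integrating the Pr\"ufer--angle ODE, and then invert the resulting $\lambda_n\asymp n^{2}$ to read off $N_\mathcal L(\Lambda)\sim (L/\pi)\sqrt\Lambda$ with $L=\int_a^b\sqrt{r/p}\,dx$. The dimensional exclusion is immediate from the $1/2$ exponent thus obtained.

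\emph{Step 1: Liouville transformation.} I would set $\xi(x):=\int_a^x\sqrt{r(t)/p(t)}\,dt$, a $C^1$ diffeomorphism of $[a,b]$ onto $[0,L]$ with $L=\int_a^b\sqrt{r/p}\,dx$, and define $u(\xi):=(p(x)r(x))^{1/4}y(x)$. A direct computation converts $-(py')'+qy=\lambda ry$ into
\begin{equation}
-u''(\xi)+Q(\xi)\,u(\xi)=\lambda\,u(\xi)\qquad\text{on }[0,L],
\end{equation}
with an effective potential $Q$ that is integrable on $[0,L]$ under the hypotheses $p,r\in C^1$ and $q\in L^1$; separated self-adjoint boundary conditions at $a,b$ transport to separated self-adjoint boundary conditions at $0,L$.

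\emph{Step 2: Pr\"ufer asymptotics.} Introduce the modified Pr\"ufer variables via $u(\xi)=R(\xi)\sin\theta(\xi)$ and $u'(\xi)=\sqrt\lambda\,R(\xi)\cos\theta(\xi)$. Then the angle satisfies
\begin{equation}
\theta'(\xi)=\sqrt\lambda-\frac{Q(\xi)}{\sqrt\lambda}\sin^2\theta(\xi),
\end{equation}
and integrating on $[0,L]$ with the error bounded by $\|Q\|_{L^1([0,L])}/\sqrt\lambda$ gives
\begin{equation}
\theta(L;\lambda)-\theta(0;\lambda)=\sqrt\lambda\,L+O(\lambda^{-1/2})\qquad(\lambda\to\infty).
\end{equation}
The separated boundary data fix $\theta(0;\lambda)$ to a bounded value and require $\theta(L;\lambda)\equiv\theta_L\pmod\pi$, so the $n$-th eigenvalue obeys $\sqrt{\lambda_n}\,L=n\pi+O(1)$, hence $\lambda_n\sim(n\pi/L)^2$. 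Inverting this count (equivalently, counting the zeros of $\sin(\theta(L;\lambda)-\theta_L)$ in $\lambda\in[0,\Lambda]$) yields $N_\mathcal L(\Lambda)=(L/\pi)\sqrt\Lambda+o(\sqrt\Lambda)$, which is the asserted leading asymptotic.

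\emph{Step 3: Dimensional exclusion and the matrix case.} If $d>1$, Weyl's law demands a counting exponent $d/2>1/2$; the SL count just derived has exponent exactly $1/2$, and no choice of $(p,q,r)$ can change this, since the exponent is intrinsic to the phase accumulation $\sqrt\lambda\,L$. For an $m\times m$ matrix SL system with the same regularity, the matrix Pr\"ufer angle (or, equivalently, Pr\"ufer applied componentwise after simultaneous diagonalization of the free part) accumulates $m$ phases of rate $\sqrt\lambda$ per unit $\xi$, giving $N(\Lambda)\sim m\,L\sqrt\Lambda/\pi$; multiplicity alters only the prefactor, never the exponent.

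\emph{Main obstacle.} The delicate point is the regularity budget in Step~1: with $p,r\in C^1$ only (not $C^2$), the Schwarzian-type term $((pr)^{1/4})''/(pr)^{1/4}$ appearing in $Q$ exists only in a weak sense. I expect to handle this by an integration-by-parts rewriting that absorbs the lost derivative into an $L^1$ estimate, so that the Pr\"ufer error bound still closes; alternatively, one can bypass the issue entirely via a min-max sandwich with piecewise-constant $(p_k,r_k)$ converging uniformly to $(p,r)$, for which the leading Weyl constant is explicit and passes to the limit on both sides, yielding the sharp formula.
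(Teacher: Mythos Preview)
Your argument is correct and in substance coincides with what the paper invokes: the paper's proof simply cites the classical one-dimensional Weyl formula
\[
N_{\mathcal L}(\Lambda)=\frac{1}{\pi}\int_a^b\sqrt{\frac{(\Lambda-q)r}{p}}\,dx+o(\sqrt\Lambda)
\]
from Titchmarsh and Levitan--Sargsjan and reads off the leading term, whereas you have written out the Liouville--Pr\"ufer mechanism that underlies that cited formula. So the approaches are the same at the mathematical level; yours is self-contained while the paper defers to the literature. The semiclassical integral the paper quotes and your phase accumulation $\sqrt\lambda\,L$ are of course the same object after the change of variable $\xi=\int\sqrt{r/p}$.

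Your ``main obstacle'' is well spotted and is the only place where care is needed: with $p,r\in C^1$ the Liouville potential $Q$ contains a distributional second derivative of $(pr)^{1/4}$. Of the two fixes you propose, the min--max sandwich with piecewise-constant (or smooth) approximants $(p_k,r_k)$ is the cleanest and closes with no loss, since the leading Weyl constant $\tfrac{1}{\pi}\int\sqrt{r/p}$ is continuous under uniform convergence of $p,r$ bounded away from zero; alternatively, one can bypass Liouville entirely and run a weighted Pr\"ufer transform directly on $-(py')'+qy=\lambda ry$, which requires only $p,r$ continuous and $q\in L^1$ to produce the same phase rate. Either route validates Step~2 at the stated regularity, so the exclusion in Step~3 follows as you wrote.
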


\begin{proof}
This is the classical one-dimensional Weyl law; Titchmarsh~\cite[Ch.~XIII]{Titchmarsh1962} or Levitan--Sargsjan~\cite{LevitanSargsjan1991}. Under the stated regularity/positivity, the spectrum is discrete and
\begin{equation}
N_{\mathcal L}(\Lambda)
= \frac{1}{\pi}\int_{a}^{b}\!\sqrt{\frac{\Lambda-q(x)}{p(x)}\,r(x)}\,dx
\;+\; o(\sqrt{\Lambda})\quad(\Lambda\to\infty).
\end{equation}
The leading term equals $\frac{\sqrt{\Lambda}}{\pi}\int_{a}^{b}\sqrt{r/p}\,dx$, so $N(\Lambda)\sim c\sqrt{\Lambda}$ and $\lambda_n\sim c'n^2$. For a matrix system of size $m$, one only picks up a multiplicative factor $m$ in the leading constant; the exponent remains $1/2$.

\qed
\end{proof}

\begin{remark}[Consistency with the companion paper]
The one-dimensional encoder $\hat{C}$ used in the companion paper \cite{alexa2025-spectral_classification} is a classical smooth Sturm--Liouville operator on a compact interval with eigenvalues $C_n=\pi-\kappa(n+1)^2$, decreasing quadratically in $n$. This does not reproduce the full encoded spectrum $\{C_\ell\}$ for $d>1$, where $\pi-C_\ell\sim \kappa\,\ell^{2/d}$ decreases more slowly (since $2/d<2$). The role of $\hat{C}$ is to provide an \emph{affine spectral encoding}: after the pushforward $C=\pi-\epsilon\lambda$, the \emph{bulk} edge-variable Weyl scaling $N_{\mu_C}(C)\sim \gamma_d\,\epsilon^{-d/2}(\pi-C)^{d/2}$ (and the density exponent $(d-2)/2$) follows from the original Weyl law, independently of low-lying modes near $C\uparrow\pi$. An exact realization of the entire discrete measure $\mu_C$ with multiplicities is instead furnished by the generalized one-dimensional model (Kre\u{\i}n string) in Theorem~\ref{thm:MSL-realization}.
\end{remark}

\subsection{Spectral clustering remarks}\label{subsec:clustering}
On the round sphere $S^d$ the Laplace eigenvalues form clusters
$\lambda_\ell=\ell(\ell+d-1)$ with multiplicities \cite[Ch.~1]{Chavel1984}
\begin{equation}
m_\ell=\binom{\ell+d}{\ell}-\binom{\ell+d-2}{\ell-2}
=\frac{2\ell+d-1}{\ell}\binom{\ell+d-2}{d-1}\quad(\ell\ge1),\qquad m_0=1.
\end{equation}
Under the affine encoding $C_\ell=\pi-\epsilon\,\lambda_\ell$ the spacing in the
$C$-variable is
\begin{equation}
C_\ell-C_{\ell+1}
=\epsilon\big(\lambda_{\ell+1}-\lambda_\ell\big)
=\epsilon\,(2\ell+d),
\end{equation}
so steps of the counting function $N_{\mu_C}$ occur at an increasingly sparse
lattice as $\ell$ grows, while the heights of the steps (the multiplicities)
increase polynomially (indeed $m_\ell\sim \frac{2}{(d-1)!}\,\ell^{d-1}$).

\begin{lemma}[Average multiplicity in a short \(C\)-window]\label{lem:avg-multiplicity}
Let \(\delta=\delta(C)>0\). Assume
\(\delta(C)=o(\pi-C)\) as \(C\uparrow\pi\) and \(\delta(C)=o(|C|^{1/2})\) as \(C\to-\infty\).
Define, whenever the window contains at least one cluster,
\begin{equation}
\overline m(C,\delta):=
\frac{N_{\mu_C}(C)-N_{\mu_C}(C-\delta)}{\#\{\ell:\;C_\ell\in[C-\delta,C]\}}.
\end{equation}
Then:
\begin{enumerate}[label=\textup{(\roman*)}, itemsep=2pt]
\item As \(C\uparrow\pi\), one has \(\#\{\ell:\,C_\ell\in[C-\delta,C]\}=O(1)\) and
\(\overline m(C,\delta)=O(1)\). In particular, if \(\delta<\epsilon d\), the window
contains at most one cluster and hence \(\overline m(C,\delta)=m_{\ell_\ast}\) for some fixed
\(\ell_\ast\in\{0,1,\dots\}\) (e.g.\ \(m_0=1,\,m_1=d+1\)).
\item As \(C\to-\infty\), let \(\ell(C)\) be the (unique) index with
\(\lambda_{\ell(C)}\le (\pi-C)/\epsilon<\lambda_{\ell(C)+1}\). Then
\begin{equation}
\overline m(C,\delta)\ \sim\ m_{\ell(C)}\ \sim\
\frac{2}{(d-1)!}\Big(\frac{|C|}{\epsilon}\Big)^{\frac{d-1}{2}}.
\end{equation}
\end{enumerate}
\end{lemma}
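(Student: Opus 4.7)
The plan is to pass to the $\lambda$-variable via the strict monotonicity of $C=\pi-\epsilon\lambda$ (Lemma~\ref{lem:mono-encoding}) and then read the count off the explicit spherical data $\lambda_\ell=\ell(\ell+d-1)$, using the gap formula $\lambda_{\ell+1}-\lambda_\ell=2\ell+d$ recorded just above. Writing $y:=\pi-C$, the window $[C-\delta,C]$ transports to the $\lambda$-interval $I_\lambda:=[y/\epsilon,(y+\delta)/\epsilon]$ of length $\delta/\epsilon$; the denominator $\#\{\ell:C_\ell\in[C-\delta,C]\}$ counts the distinct spherical clusters meeting $I_\lambda$, and by Lemma~\ref{lem:exact} the numerator $\NC(C)-\NC(C-\delta)$ equals the sum of their multiplicities. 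The entire lemma therefore reduces to bounding the number of clusters that can fit into $I_\lambda$ in each regime of $\delta$.

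For (i), as $C\uparrow\pi$ we have $y\to 0^+$ and the hypothesis $\delta=o(y)$ forces $\delta\to 0$; in particular, $\delta/\epsilon<d=\lambda_1-\lambda_0$ eventually, so $I_\lambda$ is shorter than the smallest gap between consecutive distinct eigenvalues and therefore meets at most one cluster. When the window is nonempty, $\overline m(C,\delta)$ equals the multiplicity $m_{\ell_\ast}$ of that single cluster, and $\ell_\ast$ ranges over a bounded set (determined by $y+\delta$), so $\overline m=O(1)$. The explicit form $\overline m=m_{\ell_\ast}$ with $\ell_\ast\in\{0,1,\dots\}$ under the a priori bound $\delta<\epsilon d$ is the same observation applied non-asymptotically.

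For (ii), I will invert $\lambda_\ell\sim\ell^2$ to get $\ell(C)\sim\sqrt{y/\epsilon}$, so that the cluster spacing near level $y/\epsilon$ equals $2\ell(C)+d\sim 2\sqrt{y/\epsilon}$ in $\lambda$-units, equivalently $\epsilon(2\ell(C)+d)\sim 2\sqrt{\epsilon y}$ in $C$-units. Since $\delta=o(y^{1/2})=o(\sqrt{\epsilon y})$, the window again admits at most one cluster, whose index $\ell_\ast$ must lie in $\{\ell(C),\ell(C)+1\}$. The multiplicity $m_\ell\sim\frac{2}{(d-1)!}\ell^{d-1}$ is regularly varying in $\ell$ of index $d-1$, so $m_{\ell(C)+1}/m_{\ell(C)}\to 1$, and hence $\overline m(C,\delta)=m_{\ell_\ast}\sim m_{\ell(C)}\sim\frac{2}{(d-1)!}(y/\epsilon)^{(d-1)/2}$, which becomes $\frac{2}{(d-1)!}(|C|/\epsilon)^{(d-1)/2}$ after using $y=\pi-C\sim|C|$.

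The main obstacle is the calibration check: one must verify that the hypothesis $\delta=o(|C|^{1/2})$ is exactly the sharp threshold separating \emph{one cluster per window} from \emph{two or more clusters per window}, by matching $\delta$ against the $C$-spacing $\sim 2\sqrt{\epsilon y}$. Any $\delta$ of the same order or larger would allow two adjacent clusters of very different multiplicities into the window and the naive average would blur them; the remaining off-by-one ambiguity between $\ell(C)$ and $\ell(C)+1$ is harmless precisely because $m_\ell$ is a power of $\ell$ of positive degree, so the relative error in $m_{\ell_\ast}/m_{\ell(C)}$ vanishes in the limit.
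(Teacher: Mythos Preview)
Your proposal is correct and follows essentially the same route as the paper's proof: both transport the window to the $\lambda$-variable, bound the number of clusters by comparing $\delta/\epsilon$ to the minimal gap ($\ge d$ near the edge, $\sim 2\sqrt{\Lambda}$ in the bulk), and then combine $m_\ell\sim\frac{2}{(d-1)!}\ell^{d-1}$ with $\ell(C)\sim\sqrt{|C|/\epsilon}$ to obtain the asymptotic. The only cosmetic difference is that the paper first phrases the bulk step for a generic bounded index set $J=\{\ell(C)+j\}$ before noting that $K(C,\delta)=1$ eventually, whereas you argue directly that the window contains at most one cluster with index in $\{\ell(C),\ell(C)+1\}$.
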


\begin{proof}
Write \(\Lambda:=(\pi-C)/\epsilon\) and \(\Delta:=\delta/\epsilon\). Since \(C=\pi-\epsilon\lambda\) is strictly decreasing in \(\lambda\),
the set \(\{\ell:\,C_\ell\in[C-\delta,C]\}\) equals
\begin{equation}
\{\ell:\,\lambda_\ell\in[\Lambda,\ \Lambda+\Delta]\},
\end{equation}
up to the choice of open/closed endpoints (which does not affect counting with multiplicities).
Denote this set by \(I(C,\delta)\) and its cardinality by \(K(C,\delta)\).

\emph{Step 1: upper bounds for \(K(C,\delta)\).}
Let \(s_{\min}:=\min_{\ell\in I(C,\delta)}(\lambda_{\ell+1}-\lambda_\ell)\).
Then any interval in the \(\lambda\)-axis of length \(\Delta\) contains at most
\(1+\Delta/s_{\min}\) grid points \(\{\lambda_\ell\}\), hence
\begin{equation}\label{eq:Kbound}
K(C,\delta)\ \le\ 1+\frac{\Delta}{s_{\min}}.
\end{equation}
Near the edge \(C\uparrow\pi\) we have \(\Lambda\downarrow0\), so \(I(C,\delta)\subset\{0,1,\dots,L_0\}\)
with some fixed \(L_0\) for all \(C\) close to \(\pi\); in this band
\(s_{\min}\ge d\). Since \(\delta=o(\pi-C)\) implies \(\Delta\to0\), \eqref{eq:Kbound}
gives \(K(C,\delta)=O(1)\). Moreover, if \(\delta<\epsilon d\) then
\(\Delta<d\le s_{\min}\) and \(K(C,\delta)\le 1\).

As \(C\to-\infty\), the relevant indices satisfy \(\ell\asymp\sqrt{\Lambda}\).
Hence throughout \(I(C,\delta)\) we have \(s_{\min}\asymp 2\ell\asymp 2\sqrt{\Lambda}\).
From \eqref{eq:Kbound} and \(\Delta=\delta/\epsilon\) we get
\begin{equation}
K(C,\delta)\ \le\ 1+\frac{\delta/\epsilon}{c\sqrt{\Lambda}}
\ \sim\ 1+\frac{\delta}{c\sqrt{\epsilon|C|}},
\end{equation}
which is \(O(1)\) and in fact equals \(1\) for all sufficiently negative \(C\) if
\(\delta=o(|C|^{1/2})\). In particular, the window eventually contains at most one cluster.

\emph{Step 2: control of multiplicities in the window.}
Near the edge \(C\uparrow\pi\) the set of admissible indices \(I(C,\delta)\) is contained
in a fixed finite set; hence \(\sup_{\ell\in I(C,\delta)} m_\ell\le M_0\) for some constant
\(M_0\), which yields \(\overline m(C,\delta)=O(1)\). If furthermore \(K(C,\delta)=1\),
then trivially \(\overline m(C,\delta)=m_{\ell_\ast}\) for the unique \(\ell_\ast\in I(C,\delta)\).

For \(C\to-\infty\), we use the asymptotic \(m_\ell\sim \frac{2}{(d-1)!}\ell^{d-1}\).
Since \(K(C,\delta)\) is bounded and, under \(\delta=o(|C|^{1/2})\), eventually equals \(1\),
the indices in \(I(C,\delta)\) form a set of the form
\(\{\ell(C)+j:\,j\in J\}\) with \(J\subset\mathbb{Z}\) finite and independent of \(C\).
Therefore,
\begin{equation}
\frac{m_{\ell(C)+j}}{m_{\ell(C)}}\ \longrightarrow\ 1
\qquad(C\to-\infty)
\end{equation}
for each fixed \(j\), because \(m_\ell\) is a polynomial in \(\ell\) of degree \(d-1\).
It follows that
\begin{equation}
\overline m(C,\delta)\ =\
\frac{1}{K(C,\delta)}\sum_{\ell\in I(C,\delta)} m_\ell\
=\ m_{\ell(C)}\cdot\frac{1}{K(C,\delta)}\sum_{j\in J}
\frac{m_{\ell(C)+j}}{m_{\ell(C)}}\
\longrightarrow\ m_{\ell(C)}.
\end{equation}
Finally, \(\ell(C)\sim \sqrt{\Lambda}\sim \sqrt{|C|/\epsilon}\), hence
\(m_{\ell(C)}\sim \frac{2}{(d-1)!} \big(\frac{|C|}{\epsilon}\big)^{\frac{d-1}{2}}\),
which yields the stated asymptotics.
\qed
\end{proof}

\noindent
Thus the microscopic staircase due to clustering is negligible near the geometric edge
(\(C\uparrow\pi\)), where jumps are small and sparse, and it averages out in the bulk
(\(C\to-\infty\)), which is favorable for practical extraction of \(d\) and \(\gamma_d\)
from bulk edge-variable data (cf.\ Corollary~\ref{cor:recovery}). In the Kre\u{\i}n
string realization (Theorem~\ref{thm:MSL-realization}), multiplicities are encoded as
the \emph{weights} of the atomic spectral measure, so clustering is faithfully carried by
\(\mu_C\) without affecting the leading asymptotics.

\begin{proposition}[Edge hit probability for a fixed cluster]\label{prop:edge-jump-dist}
Let $C_\ell=\pi-\epsilon\,\lambda_\ell$ with $\lambda_\ell=\ell(\ell+d-1)$, and let $\delta=\delta(C)>0$.
Fix an index $\ell$ and consider the following randomization: choose $U$ uniformly on the interval
$[0,\,C_\ell-C_{\ell+1})$ and set the window $[C-\delta,\,C]$ with $C=C_\ell-U$.
If $\delta< C_\ell-C_{\ell+1}=\epsilon(2\ell+d)$, then
\begin{equation}
\mathbb{P}\big(\,C_\ell\in[C-\delta,\,C]\,\big)=\frac{\delta}{C_\ell-C_{\ell+1}}
=\frac{\delta}{\epsilon(2\ell+d)}.
\end{equation}
In particular, for fixed (or uniformly bounded) $\delta$, this probability decays like $O(\ell^{-1})$ as $\ell\to\infty$.
\end{proposition}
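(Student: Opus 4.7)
The proposition is an elementary probabilistic computation. I plan to establish it in three short steps by combining the explicit spherical eigenvalue gap with the uniform law of $U$.

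First, I would compute the $C$-gap between consecutive clusters. Using $\lambda_\ell=\ell(\ell+d-1)$, a one-line algebraic simplification yields $\lambda_{\ell+1}-\lambda_\ell=2\ell+d$, hence $C_\ell-C_{\ell+1}=\epsilon(2\ell+d)$. This quantity is simultaneously the length of the support of $U$ and the denominator appearing in the claim; once established, everything that remains is a length-ratio computation.

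Second, I would translate the hit event $\{C_\ell\in[C-\delta,C]\}$ into a condition on $U$. Substituting $C=C_\ell-U$ (and fixing the endpoint convention of the window so as to be consistent with the right-continuous definition of $N_{\mu_C}$ and with the window-counting convention used in Lemma~\ref{lem:avg-multiplicity}) reduces the hit event to $U$ lying in an interval of Lebesgue length exactly $\delta$. The hypothesis $\delta<C_\ell-C_{\ell+1}$ is then exactly what is needed to guarantee that this favorable interval fits inside the full support $[0,C_\ell-C_{\ell+1})$ of $U$, so that no truncation occurs and no neighboring cluster $C_{\ell\pm1}$ can be caught by the window.

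Third, under the uniform law on $[0,C_\ell-C_{\ell+1})$ the probability equals the ratio of Lebesgue lengths, giving $\delta/(\epsilon(2\ell+d))$, and the $O(\ell^{-1})$ asymptotic tail follows immediately from $2\ell+d\sim 2\ell$ for any $\delta$ bounded above.

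There is essentially no obstacle here: the argument is a routine change-of-variables plus a uniform-length calculation, and it leans on no Tauberian machinery. The only genuine subtlety is bookkeeping on the window's endpoints, since the three intervals $[C-\delta,C]$, $(C-\delta,C]$, and $[C-\delta,C)$ all carry the same Lebesgue mass under a continuous uniform law but differ in whether the boundary case $C=C_\ell$ counts; this is fixed by requiring consistency with the right-continuous, càdlàg counting function $N_{\mu_C}$ used throughout the paper, after which the translation in the second step is unambiguous.
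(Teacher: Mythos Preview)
Your proposal is correct and follows essentially the same path as the paper's own proof: compute the gap $C_\ell-C_{\ell+1}=\epsilon(2\ell+d)$ from $\lambda_{\ell+1}-\lambda_\ell=2\ell+d$, translate the hit event into $\{U\in[0,\delta]\}$ via the substitution $C=C_\ell-U$, and read off the uniform length ratio. The paper's argument is even terser than yours and does not pause on the endpoint bookkeeping you flag.
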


\begin{proof}
Under the stated randomization, the event $\{C_\ell\in[C-\delta,C]\}$ is equivalent to
$\{U\in[0,\delta]\}$ inside a fundamental cell of length $C_\ell-C_{\ell+1}$. Since $U$ is uniform,
the probability equals $\delta/(C_\ell-C_{\ell+1})$. The identity $C_\ell-C_{\ell+1}=\epsilon(2\ell+d)$
follows from $\lambda_{\ell+1}-\lambda_\ell=2\ell+d$. The large-$\ell$ decay is immediate.
\qed
\end{proof}

\begin{remark}[Jump-size weighting inside a window]
If one conditions on the event that the window contains exactly one cluster and then samples
a \emph{jump} of $N_{\mu_C}$ uniformly among the atoms inside the window, the observed jump size
equals the multiplicity $m_\ell$ of that cluster. Thus, conditional on ``one-cluster'' windows,
the distribution of observed jump sizes is the empirical distribution of $\{m_\ell\}$ at the
corresponding indices, not $1/m_\ell$. Near the edge $C\uparrow\pi$, small windows contain
only $\ell=0$ (and then $\ell=1$), so the observed jump is $m_0=1$ (then $m_1=d+1$), in agreement
with Lemma~\ref{lem:avg-multiplicity}.
\end{remark}

\section{Examples}\label{sec:examples}

We compute the edge-variable Weyl law explicitly for the principal model geometries,
illustrating the dependence of $\gamma_d$ on volume and dimension, the universality
of the bulk density exponent $(d-2)/2$, and the robustness of the asymptotics
under metric deformations and topological quotients.

\subsection{The round sphere $S^d$}\label{subsec:sphere}

The unit round sphere provides the simplest closed example with known spectrum
and explicit Weyl constant; the case $d=3$ gives the numerical sanity check
$\gamma_3=1/3$.

\begin{corollary}[Explicit $\gamma_d$ on $S^d$]\label{cor:gamma-sd}
For the unit round sphere $S^d$,
\begin{equation}
\gamma_d=\frac{\omega_d}{(2\pi)^d}\,\mathrm{Vol}(S^d)
=\frac{2^{\,1-d}\,\sqrt{\pi}}{\Gamma\!\big(\tfrac d2+1\big)\,\Gamma\!\big(\tfrac{d+1}{2}\big)}
\end{equation}
\cite{Chavel1984,Hoermander1968}.
In particular, for $d=3$ one has $\gamma_3=\tfrac{1}{3}$. Hence, as $C\to -\infty$,
\begin{equation}
N_{\mu_C}(C)\sim \tfrac{1}{3}\,\epsilon^{-3/2}(\pi-C)^{3/2},
\qquad
\rho_{\mathrm{bulk}}(C)\sim \tfrac{1}{2}\,\epsilon^{-3/2}(\pi-C)^{1/2} \quad(d=3).
\end{equation}
\end{corollary}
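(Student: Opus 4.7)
The plan is to reduce the statement to two independent ingredients: (a) the classical closed form of the Weyl constant $\gamma_d$ expressed through the volume of $M$ and the unit ball, and (b) the specific values of $\mathrm{Vol}(S^d)$ and $\omega_d$, after which the edge–variable asymptotics follow by direct substitution into Theorem~\ref{thm:edge-recon}. No Tauberian work is needed here; the corollary is a pure computation on top of the machinery already established.

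First I would recall the Hörmander/Weyl formula for the Laplace--Beltrami counting function on a closed Riemannian manifold,
\begin{equation}
N_\Delta(\Lambda)\sim \frac{\omega_d}{(2\pi)^d}\,\mathrm{Vol}(M)\,\Lambda^{d/2}\qquad(\Lambda\to\infty),
\end{equation}
so that $\gamma_d=\omega_d\,(2\pi)^{-d}\,\mathrm{Vol}(S^d)$ on the unit round sphere; this is the content of \cite{Hoermander1968} under Assumption~\ref{ass:framework}(ii). Next I would substitute the standard closed forms $\omega_d=\pi^{d/2}/\Gamma(\tfrac d2+1)$ (as already recorded in Assumption~\ref{ass:framework}(iv)) and $\mathrm{Vol}(S^d)=2\pi^{(d+1)/2}/\Gamma(\tfrac{d+1}{2})$. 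Collecting $\pi$–powers gives
\begin{equation}
\gamma_d=\frac{\pi^{d/2}}{(2\pi)^d\,\Gamma(\tfrac d2+1)}\cdot\frac{2\pi^{(d+1)/2}}{\Gamma(\tfrac{d+1}{2})}
=\frac{2\,\pi^{(2d+1)/2 - d}}{2^d\,\Gamma(\tfrac d2+1)\,\Gamma(\tfrac{d+1}{2})}
=\frac{2^{1-d}\sqrt{\pi}}{\Gamma(\tfrac d2+1)\,\Gamma(\tfrac{d+1}{2})},
\end{equation}
which is the claimed closed form. For the $d=3$ specialization I would just evaluate: $\Gamma(5/2)=\tfrac{3\sqrt\pi}{4}$ and $\Gamma(2)=1$, whence $\gamma_3=2^{-2}\sqrt\pi/(\tfrac{3\sqrt\pi}{4})=1/3$; equivalently, $\mathrm{Vol}(S^3)=2\pi^2$, $\omega_3=\tfrac{4\pi}{3}$, and $\gamma_3=\tfrac{4\pi}{3}\cdot 2\pi^2/(2\pi)^3=1/3$ as a sanity check.

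Finally, I would plug $d=3$ and $\gamma_3=1/3$ into the edge–variable Weyl transfer of Theorem~\ref{thm:edge-recon} (equations~\eqref{eq:edge-weyl-asymp}–\eqref{eq:edge-density-asymp}) to obtain, as $C\to-\infty$,
\begin{equation}
N_{\mu_C}(C)\sim \tfrac{1}{3}\,\epsilon^{-3/2}(\pi-C)^{3/2},\qquad
\rho_{\mathrm{bulk}}(C)\sim \tfrac{d}{2}\gamma_d\,\epsilon^{-d/2}(\pi-C)^{(d-2)/2}\Big|_{d=3}
=\tfrac{1}{2}\,\epsilon^{-3/2}(\pi-C)^{1/2}.
\end{equation}
No genuine obstacle arises: the only nontrivial point is keeping the $\pi$–bookkeeping and the $\Gamma$–function simplification straight, and the step one must be careful with is the gamma–function algebra (using $\Gamma(\tfrac{d+1}{2})\Gamma(\tfrac d2+1)$ rather than the Legendre duplication form, to avoid introducing an extraneous factor of $2^{d-1}$). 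Everything else is a direct appeal to Theorem~\ref{thm:edge-recon} and the already–cited Weyl constant formula.
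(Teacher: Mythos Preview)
Your proposal is correct and follows essentially the same approach as the paper's own proof: substitute the closed forms $\omega_d=\pi^{d/2}/\Gamma(\tfrac d2+1)$ and $\mathrm{Vol}(S^d)=2\pi^{(d+1)/2}/\Gamma(\tfrac{d+1}{2})$ into $\gamma_d=\omega_d(2\pi)^{-d}\mathrm{Vol}(S^d)$, evaluate at $d=3$, and invoke Theorem~\ref{thm:edge-recon}. Your version simply spells out the $\pi$- and $\Gamma$-bookkeeping (and the sanity check via $\mathrm{Vol}(S^3)=2\pi^2$, $\omega_3=\tfrac{4\pi}{3}$) more explicitly than the paper does.
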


\begin{proof}
Use $\omega_d=\pi^{d/2}/\Gamma(\tfrac d2+1)$ and
$\mathrm{Vol}(S^d)=2\pi^{(d+1)/2}/\Gamma(\tfrac{d+1}{2})$, then substitute into
$\gamma_d=\frac{\omega_d}{(2\pi)^d}\mathrm{Vol}(S^d)$.
For $d=3$ this gives $\gamma_3=1/3$. Insert into Theorem~\ref{thm:edge-recon}.
\qed
\end{proof}

\begin{corollary}[Numerical sanity check for $S^3$]\label{cor:numerical-s3}
For the unit sphere $S^3$ one has $\gamma_3=\tfrac{1}{3}$,
$\lambda_\ell=\ell(\ell+2)$ and $m_\ell=(\ell+1)^2$.
Fix $\epsilon=1$ and set $y=\pi-C$, and, for illustration, take $\delta=0.1\,y$, so that the $\lambda$--window is $[\Lambda,\Lambda+\delta]$ with $\Lambda=y$.
\begin{enumerate}[label=\textup{(\alph*)}, itemsep=2pt]
\item Edge regime: if $y=0.1$ then $[\Lambda,\Lambda+\delta]=[0.1,0.11]$ contains no eigenvalues ($\lambda_0=0$, $\lambda_1=3$), hence the window produces no jump.
\item Away from the edge: if $y=8$ then $[\Lambda,\Lambda+\delta]=[8,8.8]$ contains $\lambda_2=8$, so the unique jump has size $m_2=(2+1)^2=9$ and the observed step is $9$.
\end{enumerate}
Both observations are consistent with Lemma~\ref{lem:avg-multiplicity}.
\end{corollary}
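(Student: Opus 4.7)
The proof is a direct arithmetic verification, so the plan is to translate each $C$-window into a $\lambda$-window using the exact composition identity (Lemma~\ref{lem:exact}), enumerate the low-lying sphere eigenvalues, and then compare with Lemma~\ref{lem:avg-multiplicity}. I would first recall the standard $S^3$ spectral data $\lambda_\ell=\ell(\ell+2)$ and $m_\ell=(\ell+1)^2$ used in Section~\ref{subsec:clustering}, and quote $\gamma_3=1/3$ from Corollary~\ref{cor:gamma-sd}. With $\epsilon=1$, Lemmas~\ref{lem:mono-encoding} and~\ref{lem:exact} give the exact identification
\begin{equation}
\{\ell:\,C_\ell\in[C-\delta,C]\}=\{\ell:\,\lambda_\ell\in[y,y+\delta]\},\qquad y=\pi-C,
\end{equation}
so that the jump $N_{\mu_C}(C-\delta)-N_{\mu_C}(C)$ equals $\sum_{\lambda_\ell\in[y,y+\delta]} m_\ell$.

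For part (a) I set $y=0.1$, $\delta=0.01$. The first eigenvalues are $\lambda_0=0$ and $\lambda_1=3$, so $[0.1,0.11]\subset(0,3)$ contains no eigenvalue and the multiplicity sum vanishes; thus the counting function has no jump over $[C-\delta,C]$. Since $\delta=0.01<\epsilon d=3$, this is covered by the at–most–one–cluster regime of Lemma~\ref{lem:avg-multiplicity}(i), with the cluster count equal to $0$.

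For part (b) I set $y=8$, $\delta=0.8$. Computing $\lambda_2=2\cdot 4=8$ and $\lambda_3=3\cdot 5=15$, the ordering $\lambda_1=3<8\le\lambda_2=8<8.8<15=\lambda_3$ shows that $[8,8.8]$ contains exactly the single eigenvalue $\lambda_2$. Its multiplicity is $m_2=3^2=9$, which is the observed jump. Here $\delta=0.8<\epsilon(2\cdot 2+d)=7$, so Lemma~\ref{lem:avg-multiplicity}(i) applies with $\ell_\ast=2$ and $\overline m(C,\delta)=9$, consistent with the claim.

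There is no substantive obstacle; the whole statement is an illustrative numerical check. The only care required is endpoint handling, since $\lambda_2=8$ sits exactly on the left endpoint of the $\lambda$-window: under the càdlàg convention of $N_\Delta$ (Lemma~\ref{lem:counting-properties}), the eigenvalue is included in the closed interval $[8,8.8]$, and the counting function jumps by $m_2=9$ precisely at $C=\pi-8$.
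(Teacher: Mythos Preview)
Your proposal is correct and follows the same direct arithmetic verification that the paper intends; indeed the paper gives no separate proof block for this corollary, as the check is embedded in the statement itself. Your added care about the endpoint convention at $\lambda_2=8$ and the explicit invocation of Lemmas~\ref{lem:mono-encoding}--\ref{lem:exact} to translate the $C$-window into a $\lambda$-window are appropriate and make the verification fully rigorous.
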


\begin{remark}[3D bulk case]
For $d=3$, Corollary~\ref{cor:gamma-sd} gives
$N_{\mu_C}(C)\sim \tfrac{1}{3}\,\epsilon^{-3/2}(\pi-C)^{3/2}$ and
$\rho_{\mathrm{bulk}}(C)\sim \tfrac{1}{2}\,\epsilon^{-3/2}(\pi-C)^{1/2}$.
Thus the bulk density exponent $1/2$ recovers $d=3$ from edge-variable data via Theorem~\ref{thm:edge-recon}. The quadratic encoder model $C_n=\pi-\kappa(n+1)^2$ can be realized by a Kre\u{\i}n string (Corollary~\ref{cor:quadratic-Krein}); the affine pushforward $C=\pi-\epsilon\lambda$ transfers the geometric Weyl scaling independently of low modes near $C\uparrow\pi$.
\end{remark}

\subsection{The flat torus $T^d$}\label{subsec:torus}

For flat tori the Weyl constant depends only on the volume; the bulk exponent
$(d-2)/2$ is unchanged, and the density is asymptotically flat for $d=2$.

\begin{corollary}[Flat torus $T^d$]\label{cor:torus}
For a flat torus $T^d=\mathbb{R}^d/\Lambda$ with volume $\mathrm{Vol}(T^d)$ one has
$\gamma_d=\frac{\omega_d}{(2\pi)^d}\,\mathrm{Vol}(T^d)$. Consequently, as $C\to -\infty$,
\begin{equation}
N_{\mu_C}(C)\sim \gamma_d\,\epsilon^{-d/2}(\pi-C)^{d/2},\qquad
\rho_{\mathrm{bulk}}(C)\sim \tfrac{d}{2}\gamma_d\,\epsilon^{-d/2}(\pi-C)^{\frac{d-2}{2}}.
\end{equation}
Here $\omega_d=\mathrm{Vol}(B_d)=\pi^{d/2}/\Gamma(\tfrac d2+1)$.
\end{corollary}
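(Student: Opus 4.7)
The plan is to reduce Corollary \ref{cor:torus} to two essentially independent ingredients: a standard lattice-point count on the torus supplying the explicit Weyl constant $\gamma_d$, and a direct invocation of Theorem \ref{thm:edge-recon} to transport the asymptotics into the edge variable.

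First I would verify the value of $\gamma_d$. Writing $T^d=\mathbb{R}^d/\Lambda$ for a lattice $\Lambda\subset\mathbb{R}^d$ with covolume $\mathrm{Vol}(T^d)$, the Laplace spectrum on $T^d$ is obtained by Fourier series: the eigenfunctions are $e^{2\pi i\langle k^\ast,x\rangle}$ indexed by the dual lattice $\Lambda^\ast$, with eigenvalues $\lambda_{k^\ast}=4\pi^{2}|k^\ast|^{2}$ (multiplicity encoded by the number of dual-lattice points of a given norm). Hence $N_\Delta(\Lambda)=\#\{k^\ast\in\Lambda^\ast:|k^\ast|\le\sqrt{\Lambda}/(2\pi)\}$. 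The classical Gauss-type lattice counting gives, for a Euclidean ball of radius $R\to\infty$, $\#(\Lambda^\ast\cap B_R)\sim \omega_d R^d/\mathrm{covol}(\Lambda^\ast)$, and since $\mathrm{covol}(\Lambda^\ast)=(2\pi)^{d}/\mathrm{Vol}(T^d)$ in this normalization, one obtains
\begin{equation}
N_\Delta(\Lambda)\sim \frac{\omega_d\,\mathrm{Vol}(T^d)}{(2\pi)^{d}}\,\Lambda^{d/2},
\end{equation}
which is the claimed expression $\gamma_d=\omega_d\mathrm{Vol}(T^d)/(2\pi)^d$. This matches the general Weyl constant in Assumption \ref{ass:framework}(ii), and for stronger statements (not needed here) one has the classical two-term remainders due to lattice-point problems.

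Second, with $\gamma_d$ in hand, I would simply apply Theorem \ref{thm:edge-recon}: the exact composition identity $N_{\mu_C}(C)=N_\Delta((\pi-C)/\epsilon)$ (Lemma \ref{lem:exact}) combined with the Weyl asymptotic above immediately yields
\begin{equation}
N_{\mu_C}(C)\sim \gamma_d\,\epsilon^{-d/2}(\pi-C)^{d/2}\qquad(C\to-\infty),
\end{equation}
and then Karamata's monotone density theorem (as used in the proof of Theorem \ref{thm:edge-recon}, after the smoothing of Proposition \ref{prop:smooth-N}) gives the bulk density asymptotic
\begin{equation}
\rho_{\mathrm{bulk}}(C)\sim \tfrac{d}{2}\gamma_d\,\epsilon^{-d/2}(\pi-C)^{(d-2)/2}.
\end{equation}

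There is no real obstacle: the only non-cosmetic step is the lattice-point count, and the constant $\omega_d/(2\pi)^d\cdot\mathrm{Vol}(T^d)$ is well known (one can alternatively read it off from the heat-kernel asymptotic $\Theta_\Delta(t)\sim (4\pi t)^{-d/2}\mathrm{Vol}(T^d)$ together with Karamata's Tauberian theorem applied to $\mu_\Delta$). The remainder of the argument is an immediate application of the Abelian direction of Theorem \ref{thm:edge-recon}, with no additional hypotheses beyond those already in force in Assumption \ref{ass:framework}.
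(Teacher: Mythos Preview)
Your proposal is correct and follows essentially the same route as the paper's proof: invoke the standard Weyl law for flat tori to identify $\gamma_d=\omega_d\,\mathrm{Vol}(T^d)/(2\pi)^d$, then apply Theorem~\ref{thm:edge-recon}. The only extra content you supply is the explicit lattice-point derivation of the Weyl constant (which the paper just cites), so no substantive difference---though you should double-check the dual-lattice covolume normalization, since with eigenfunctions $e^{2\pi i\langle k^\ast,x\rangle}$ one has $\mathrm{covol}(\Lambda^\ast)=1/\mathrm{Vol}(T^d)$ rather than $(2\pi)^d/\mathrm{Vol}(T^d)$; the final formula is unaffected.
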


\begin{proof}
Use the standard Weyl law for flat tori \cite{Chavel1984} and Theorem~\ref{thm:edge-recon}.
\qed
\end{proof}

\subsection{Compact hyperbolic surfaces}\label{subsec:hyperbolic}
Let $X$ be a compact hyperbolic surface ... Weyl's law in dimension $2$ reads
$N_\Delta(\Lambda)\sim \frac{\mathrm{Area}(X)}{4\pi}\,\Lambda$ \cite{Buser1992,Iwaniec1995}.
Hence, as $C\to -\infty$,
\begin{equation}
N_{\mu_C}(C)\sim \frac{\mathrm{Area}(X)}{4\pi}\,\epsilon^{-1}\,(\pi-C),\qquad
\rho_{\mathrm{bulk}}(C)\sim \frac{\mathrm{Area}(X)}{4\pi}\,\epsilon^{-1}.
\end{equation}

Thus the bulk density is asymptotically constant (exponent $(d-2)/2=0$ for $d=2$),
independently of irregular multiplicity patterns arising from symmetries, in agreement
with Theorem~\ref{thm:edge-recon}.

\subsection{Berger spheres}\label{subsec:berger}

Berger spheres are metric deformations of $S^3$ that alter the low-lying spectrum
while leaving the volume unchanged.
The following corollary shows that the bulk asymptotics are insensitive to the
deformation parameter $k$, illustrating the universality of the rigidity framework.

\begin{corollary}[Bulk Weyl asymptotics on Berger spheres]\label{cor:berger}
Let $S^3_k$ denote the Berger sphere with deformation parameter $k>0$, equipped with the left-invariant metric
\begin{equation}
ds^2 = \sigma_1^2 + \sigma_2^2 + k^2 \sigma_3^2,
\end{equation}
where $\{\sigma_i\}$ are the standard left-invariant one-forms on $SU(2)$ \cite{BergerIkeda1979}. The Laplace spectrum on functions is given by
\begin{equation}
\lambda_{n,m}(k) = n(n+2) + (k^2-1)m^2, \qquad |m|\le n,
\end{equation}
with multiplicity $n+1$. As $C\to -\infty$, one recovers the universal bulk asymptotics
\begin{equation}
N_{\mu_C}(C)\ \sim\ \tfrac{1}{3}\,\epsilon^{-3/2}(\pi-C)^{3/2},\qquad
\rho_{\mathrm{bulk}}(C)\ \sim\ \tfrac{1}{2}\,\epsilon^{-3/2}(\pi-C)^{1/2}.
\end{equation}
\end{corollary}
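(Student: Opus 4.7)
The plan is to reduce the statement to Theorem~\ref{thm:edge-recon} with $d=3$. Since $S^3_k$ is a smooth compact connected Riemannian $3$-manifold without boundary for every $k>0$, Assumption~\ref{ass:framework} is satisfied, so it suffices to verify the classical Weyl law $N_\Delta(\Lambda)\sim \gamma_3\,\Lambda^{3/2}$ with the appropriate constant $\gamma_3$.

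First, I would invoke Hörmander's theorem directly to obtain Weyl's law on $S^3_k$ with $\gamma_3 = \frac{\omega_3}{(2\pi)^3}\,\mathrm{Vol}(S^3_k)$. This bypasses any use of the explicit spectrum and depends only on dimension and total volume. Alternatively, one can verify Weyl's law by hand from the given data $\lambda_{n,m}(k) = n(n+2)+(k^2-1)m^2$, $|m|\le n$, with multiplicity $n+1$: the counting function is
\begin{equation*}
N_\Delta(\Lambda) = \sum_{n\ge 0}(n+1)\,\#\{m\in\mathbb{Z}:\ |m|\le n,\ n(n+2)+(k^2-1)m^2\le \Lambda\},
\end{equation*}
which is a weighted lattice point count in a planar region bounded by $|m|\le n$ and an ellipsoidal curve; a Riemann-sum / Euler--Maclaurin comparison returns the same $\gamma_3\,\Lambda^{3/2}$ with the constant above, interpolating continuously in $k$ with the round-sphere case.

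Second, Theorem~\ref{thm:edge-recon} applied with $d=3$ transfers the Laplace Weyl law to the edge variable:
\begin{equation*}
N_{\mu_C}(C)\sim \gamma_3\,\epsilon^{-3/2}(\pi-C)^{3/2},\qquad \rho_{\mathrm{bulk}}(C)\sim \tfrac{3}{2}\gamma_3\,\epsilon^{-3/2}(\pi-C)^{1/2}.
\end{equation*}
Under the normalization $\mathrm{Vol}(S^3_k)=2\pi^2$ (matching Corollary~\ref{cor:gamma-sd}) this yields $\gamma_3=\tfrac13$ and the stated formulas. More generally the exponents $3/2$ and $1/2$ are truly universal in $k$, while the prefactor scales linearly in $\mathrm{Vol}(S^3_k)/(2\pi^2)$.

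The main subtle point I would flag is the reconciliation of the universal numerical prefactor $\tfrac13$ with the $k$-dependence of $\mathrm{Vol}(S^3_k)$ under the standard metric normalization (where $\mathrm{Vol}(S^3_k)=2\pi^2 k$); the stated asymptotics are consistent either by normalizing the Berger family to have volume $2\pi^2$ or by absorbing the factor $k$ into $\epsilon$. Beyond this bookkeeping issue, the proof is essentially a one-line application of Theorem~\ref{thm:edge-recon}: the role of the explicit spectrum is only to confirm Weyl's law (together with its standard $O(\Lambda)$ remainder on closed 3-manifolds), and the substantive content of the corollary is that the deformation parameter $k$ does not alter the dimensional signature carried by the bulk exponents in the $C$-variable.
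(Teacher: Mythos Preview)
Your approach is correct and matches the paper's: the corollary is stated without an explicit proof, being an immediate application of Theorem~\ref{thm:edge-recon} with $d=3$ once Weyl's law is granted on the closed manifold $S^3_k$. Your flag about the volume normalization is well taken---under the standard metric the volume of $S^3_k$ scales with $k$, so the displayed prefactor $\tfrac13$ tacitly assumes either a unit-volume normalization or the round case $k=1$; the paper does not address this, and your remark that the exponents are genuinely universal while the prefactor carries the volume is the correct reading.
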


\begin{remark}
The deformation parameter $k$ affects low-lying eigenvalues but does not alter the bulk scaling, in agreement with Theorem~\ref{thm:edge-recon}. This illustrates robustness of the affine encoding under smooth metric deformations. For visualization, compare log-log plots of $\rho_{\mathrm{bulk}}(C)$ for $k=1$ (standard $S^3$) and $k\neq1$, showing identical bulk exponents.
\end{remark}

\subsection{Lens spaces}\label{subsec:lens}

Lens spaces $L(p,q)=S^3/\mathbb{Z}_p$ are topological quotients with volume
$\mathrm{Vol}(S^3)/p$.
Although lens spaces with different $q$ may be non-isometric but isospectral,
the edge-variable bulk asymptotics detect only the volume, confirming that
affine encoding preserves Weyl invariants while ignoring finer spectral features.

\begin{corollary}[Bulk Weyl asymptotics on lens spaces]\label{cor:lens}
Let $L(p,q)=S^3/\mathbb{Z}_p$ be a 3-dimensional lens space with the induced round metric \cite{Gordon1990}. Its Laplace spectrum is a subset of that of $S^3$ with reduced multiplicities, and
\begin{equation}
\mathrm{Vol}(L(p,q)) = \tfrac{1}{p}\,\mathrm{Vol}(S^3).
\end{equation}
Hence, as $C\to -\infty$,
\begin{equation}
N_{\mu_C}(C)\ \sim\ \tfrac{1}{3p}\,\epsilon^{-3/2}(\pi-C)^{3/2},\qquad
\rho_{\mathrm{bulk}}(C)\ \sim\ \tfrac{1}{2p}\,\epsilon^{-3/2}(\pi-C)^{1/2}.
\end{equation}
\end{corollary}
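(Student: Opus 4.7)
The plan is to reduce the corollary to a direct application of Theorem~\ref{thm:edge-recon}, the only real content being the computation of the Weyl constant $\gamma_3$ for the lens space. First I would note that $L(p,q)=S^3/\mathbb{Z}_p$ is a free quotient by a finite group of isometries acting on the round $S^3$, hence a smooth closed Riemannian manifold of dimension $d=3$ satisfying Assumption~\ref{ass:smoothness}. Consequently the Laplace--Beltrami spectrum is discrete and the standard Weyl law applies with universal constant $\gamma_d=\frac{\omega_d}{(2\pi)^d}\,\mathrm{Vol}(M)$.

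Second, I would combine the stated volume relation $\mathrm{Vol}(L(p,q))=\frac{1}{p}\mathrm{Vol}(S^3)$ with the explicit value $\gamma_3(S^3)=\tfrac13$ computed in Corollary~\ref{cor:gamma-sd}. Since $\gamma_3$ depends on the manifold only through its volume, one immediately obtains
\begin{equation}
\gamma_3\bigl(L(p,q)\bigr)=\frac{\omega_3}{(2\pi)^3}\,\frac{\mathrm{Vol}(S^3)}{p}=\frac{1}{3p}.
\end{equation}
Here I would emphasize a conceptual point: while the spectrum of $L(p,q)$ is the $\mathbb{Z}_p$-invariant part of that of $S^3$ (with the same eigenvalues but with reduced multiplicities depending intricately on $q$ via number-theoretic conditions), this subtle spectrum-level information is \emph{not} needed — Weyl's law ``averages'' the multiplicities and detects only volume, so the $1/p$ factor enters uniformly regardless of $q$.

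Third, I would plug $d=3$ and $\gamma_3=1/(3p)$ into the counting asymptotic of Theorem~\ref{thm:edge-recon}, obtaining $N_{\mu_C}(C)\sim \tfrac{1}{3p}\,\epsilon^{-3/2}(\pi-C)^{3/2}$ as $C\to-\infty$. For the bulk density, I would apply the monotone density theorem exactly as in the main theorem (or invoke Proposition~\ref{prop:smooth-N} on the smoothed counting function), which yields the factor $d/2=3/2$ and the exponent $(d-2)/2=1/2$, giving the claimed $\rho_{\mathrm{bulk}}(C)\sim \tfrac{1}{2p}\,\epsilon^{-3/2}(\pi-C)^{1/2}$.

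There is no real obstacle; the only thing worth flagging in a careful write-up is the justification that Weyl's law genuinely holds on the quotient — this follows because $L(p,q)$ is itself a smooth closed manifold (not merely an orbifold), so the classical Hörmander/Ivrii Weyl asymptotics cited in Assumption~\ref{ass:framework}(ii) apply verbatim, and the remainder $O(\Lambda^{(d-1)/2})=O(\Lambda)$ transfers via Proposition~\ref{prop:remainder} if one wishes to strengthen the conclusion to a two-term statement.
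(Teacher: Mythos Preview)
Your proposal is correct and follows essentially the same approach as the paper: compute $\gamma_3(L(p,q))=\tfrac{1}{p}\gamma_3(S^3)=\tfrac{1}{3p}$ from the volume relation and Corollary~\ref{cor:gamma-sd}, then substitute $d=3$ into Theorem~\ref{thm:edge-recon}. The paper does not write out a separate proof for this corollary, treating it as an immediate specialization in the spirit of the proofs of Corollaries~\ref{cor:gamma-sd} and~\ref{cor:torus}; your remarks on the irrelevance of the fine $q$-dependent multiplicity data and the applicability of Weyl's law on the smooth quotient are in line with the paper's subsequent Remark.
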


\begin{remark}
Although different lens spaces $L(p,q)$ may be non-isometric but isospectral, the edge-variable asymptotics depend only on $d$ and the volume. This confirms that affine encoding preserves classical Weyl invariants while ignoring finer topological features. For example, compare $p=1$ (standard $S^3$) with $p>1$, where the bulk density scales inversely with $p$, reflecting the topological quotient.
\end{remark}

\subsection{Manifolds with boundary: the ball}\label{subsec:ball}

Relaxing Assumption~\ref{ass:smoothness} to allow smooth boundary,
we illustrate the two-term bulk expansion of Proposition~\ref{prop:boundary}
on the unit ball $B^3$ with Dirichlet boundary conditions.
The boundary term, proportional to the surface area, survives the encoding and
appears as the linear subleading correction.

\begin{proposition}[Dirichlet Laplacian on the ball $B^3$]\label{prop:ball}
Consider the unit ball $B^3\subset\mathbb{R}^3$ with Dirichlet boundary conditions \cite{Ivrii1998}. Weyl's two-term law gives
\begin{equation}
N_\Delta(\Lambda) = \tfrac{\omega_3}{(2\pi)^3}\,\mathrm{Vol}(B^3)\,\Lambda^{3/2}
- \tfrac{\omega_2}{4(2\pi)^2}\,\mathrm{Area}(S^2)\,\Lambda + o(\Lambda).
\end{equation}
Under affine encoding $C=\pi-\epsilon\lambda$ this transfers to
\begin{equation}
N_{\mu_C}(C)\ \sim\ \tfrac{2}{9\pi}\,\epsilon^{-3/2}(\pi-C)^{3/2}
- \tfrac{1}{4}\,\epsilon^{-1}(\pi-C) + o(\pi-C).
\end{equation}
\end{proposition}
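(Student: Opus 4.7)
The plan is to regard Proposition~\ref{prop:ball} as a direct specialization of the two-term transfer Proposition~\ref{prop:boundary} to the unit ball $B^3$ with the Dirichlet Laplacian. The only ingredients beyond that are the classical two-term Weyl asymptotics on a smooth bounded Euclidean domain and an elementary numerical simplification of the resulting prefactors.

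First I would invoke the two-term Weyl law for the Dirichlet Laplacian on a smooth bounded domain in $\mathbb{R}^d$, in the form proved by H\"ormander, Seeley, Ivrii and Melrose (\cite[Ch.~1]{Ivrii1998}): provided the set of periodic generalized billiard trajectories has measure zero (which holds for the round ball $B^d$ by strict convexity), one has
\begin{equation}
N_\Delta(\Lambda)=\tfrac{\omega_d}{(2\pi)^d}\,\mathrm{Vol}(B^d)\,\Lambda^{d/2}
\;-\;\tfrac{\omega_{d-1}}{4(2\pi)^{d-1}}\,\mathrm{Area}(\partial B^d)\,\Lambda^{(d-1)/2}
\;+\;o\!\big(\Lambda^{(d-1)/2}\big).
\end{equation}
Specializing to $d=3$ yields the first display in the proposition with $\gamma_3=\tfrac{\omega_3}{(2\pi)^3}\,\mathrm{Vol}(B^3)$ and $\beta_3=-\tfrac{\omega_2}{4(2\pi)^2}\,\mathrm{Area}(S^2)$ in the notation of Proposition~\ref{prop:boundary}.

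Next I would apply Proposition~\ref{prop:boundary} verbatim with the above $\gamma_3,\beta_3$: substituting $\Lambda=(\pi-C)/\epsilon$ into the two-term expansion and using Lemma~\ref{lem:exact} gives
\begin{equation}
N_{\mu_C}(C)=\gamma_3\,\epsilon^{-3/2}(\pi-C)^{3/2}
+\beta_3\,\epsilon^{-1}(\pi-C)
+o(\pi-C),
\end{equation}
as $C\to-\infty$. The leading term inherits the geometric scaling of Theorem~\ref{thm:edge-recon}, while the sub-leading term carries the surface correction with the correct Dirichlet sign.

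Finally I would carry out the numerical simplification, substituting $\omega_3=\mathrm{Vol}(B^3)$, $\omega_2=\pi$, $\mathrm{Area}(S^2)=4\pi$ and reducing the resulting constants to the displayed rational coefficients. The main (and really the only) obstacle is verifying that Ivrii's non-periodicity hypothesis applies to $B^3$, which is standard: the geodesic flow on a strictly convex smooth Euclidean ball is non-periodic in measure, so the two-term law holds. Everything else is the substitution $\Lambda\leftrightarrow(\pi-C)/\epsilon$ from Lemma~\ref{lem:exact}, already packaged in Proposition~\ref{prop:boundary}.
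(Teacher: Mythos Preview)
Your approach is exactly the intended one: the paper states this proposition without an explicit proof, and the remark immediately following it points back to the transfer mechanism of Propositions~\ref{prop:remainder}--\ref{prop:boundary}, so the substitution $\Lambda=(\pi-C)/\epsilon$ via Lemma~\ref{lem:exact} together with the classical two-term Dirichlet Weyl law is the whole argument.

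One caution on your final ``numerical simplification'' step. When you actually carry it out you will obtain
\[
\gamma_3=\frac{\omega_3}{(2\pi)^3}\,\mathrm{Vol}(B^3)=\frac{(4\pi/3)^2}{8\pi^3}=\frac{2}{9\pi},
\qquad
\beta_3=-\frac{\omega_2}{4(2\pi)^2}\,\mathrm{Area}(S^2)=-\frac{\pi\cdot 4\pi}{16\pi^2}=-\frac{1}{4},
\]
rather than the displayed $\tfrac{1}{3}$ and $-\tfrac{1}{16}$. The value $\tfrac{1}{3}$ is the Weyl constant for the round sphere $S^3$ (Corollary~\ref{cor:gamma-sd}), whose volume is $2\pi^2$, not that of the unit ball $B^3$ with volume $4\pi/3$; so the proposition's constants appear to be misprints carried over from the sphere example. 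Your method is correct and will in fact detect this discrepancy rather than reproduce it, so do not be surprised when the reduction does not land on the stated rationals.
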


\begin{remark}
The boundary term survives the transfer and appears as the linear correction in the bulk expansion, confirming Proposition~\ref{prop:remainder}. This example highlights how the method handles manifolds with boundary, with the edge-variable asymptotics capturing both volume and area contributions. A numerical plot of $N_{\mu_C}(C)$ vs. $(\pi-C)$ would visually separate the bulk and boundary terms.
\end{remark}

\section{Conclusion}\label{sec:conclusion}

The central result of this paper is a rigidity theorem for spectral encodings:
among all polynomial-type reparametrizations $C=\pi-\epsilon\lambda^kL(\lambda)$
with $L\in\mathrm{RV}_0$, only the affine case $k=1$ preserves the geometric
Weyl density exponent $(d-2)/2$; this uniqueness extends to the full
O-regularly varying class, where the bulk power-law forces $\phi\in\mathrm{RV}_1$.
As a structural consequence of the uniquely correct affine encoding, the
edge-variable counting function satisfies
$N_{\mu_C}(C)\sim\gamma_d\,\epsilon^{-d/2}(\pi-C)^{d/2}$ in the bulk, enabling
reconstruction of dimension $d$ and Weyl constant $\gamma_d$; this transfer is
stable under admissible perturbations $\delta=o(\lambda)$ with explicit
slowly-varying error rates.
The full discrete spectral measure with multiplicities, not realizable by any
smooth Sturm--Liouville operator for $d>1$, is completely and uniquely
represented by a Kre\u{\i}n string; clustering on spheres averages out in
the bulk without affecting the limiting exponent.
Conceptually, this is organized by asymptotic spectral equivalence classes:
encodings with $\phi\in\RV_k$ act as dimension-scaling morphisms
$d_{\mathrm{as}}\mapsto d_{\mathrm{as}}/k$, and the dimension-preserving
endomorphisms are exactly those with $\phi\in\RV_1$.
Exact heat-trace and zeta-function correspondences confirm coefficient
alignment for constant-curvature spaces, and explicit evaluations across
spheres, tori, hyperbolic surfaces, Berger spheres, lens spaces, and bounded
domains are consistent with the rigidity framework.
Open problems include the classification of encodings beyond the O-RV class,
extension to manifolds with boundary and to operators beyond the Laplacian,
and finer inverse spectral questions at the level of heat invariants.

\clearpage

\end{document}